\documentclass{amsart}

\usepackage{amsmath}
\usepackage{amsthm,amssymb,color,comment}
\usepackage{bbm}
\usepackage{hyperref}
\usepackage[TS1,T1]{fontenc}
\usepackage{inputenc}
\usepackage{dsfont}
\usepackage{tikz}
\usepackage{enumitem}
\usepackage{soul}
\usepackage[sort]{cite}

\numberwithin{equation}{section}

\newtheorem{thm}{Theorem}[section]

\newtheorem{lem}[thm]{Lemma}
\newtheorem{cor}[thm]{Corollary}

\newtheorem{Def}[thm]{Definition}

\theoremstyle{definition}
\newtheorem{Ass}[thm]{Assumption}
\newtheorem{rem}[thm]{Remark}

\newtheorem{Not}[thm]{Notation}

\DeclareMathOperator{\DIV}{div}

\newcommand{\R}{\mathbb{R}}
\newcommand{\N}{\mathbb{N}}
\newcommand{\p}{\partial}

\newcommand{\supp}{\text{supp}}
\newcommand{\diff}{\mathop{}\!\mathrm{d}}

\newcommand{\symm}{\mathbb{R}^{d\times d}_{\text{sym}}}

\newcommand{\wstar}{\overset{\ast}{\rightharpoonup}}

\makeatletter
\newcommand{\doublewidetilde}[1]{{%
  \mathpalette\double@widetilde{#1}%
}}
\newcommand{\double@widetilde}[2]{%
  \sbox\z@{$\m@th#1\widetilde{#2}$}%
  \ht\z@=.9\ht\z@
  \widetilde{\box\z@}%
}
\makeatother
\textwidth 6 in
\evensidemargin 0.2 in
\oddsidemargin 0.2 in

\parskip 3pt

\author{Jakub Woźnicki}
\address{Institute of Mathematics of Polish Academy of Sciences, Jana i J\k edrzeja \'Sniadeckich 8, 00-656 Warsaw, Poland; Faculty of Mathematics, Informatics and Mechanics, University of Warsaw, Stefana Banacha 2, 02-097 Warsaw, Poland}
\email{jw.woznicki@student.uw.edu.pl}
\thanks{Jakub Woźnicki was supported by National Science Center, Poland through project no. 2021/43/O/ST1/03031}

\begin{document}

\title[Vlasov-type/non-Newtonian system]{Vlasov equation coupled with non-Newtonian fluids with discontinuous-in-time stress tensor}

\begin{abstract}
We analyze the system of equations describing the flow of a dilute particle system coupled with an incompressible non-Newtonian fluid in a bounded domain. In this setting, both PDEs are connected via a drag force, or the friction force. We are interested in a thin spray regime, which means that we neglect the inter-particle interactions. When it comes to the fluid system, the Cauchy stress tensor is supposed to be a monotone mapping and has asymptotically $(s-1)$-growth with the parameter $s$ depending on the spatial and time variable. We do not assume any smoothness of $s$ with respect to time variable and assume the log-H\"{o}lder continuity with respect to spatial variable. An example of materials, which satisfy those assumptions, are those whose properties are instantaneous, e.g. changed by the switched electric field. We show the long time and the large data existence of weak solution provided that $s\ge\frac{3d+2}{d+2}$.
\end{abstract}

\keywords{incompressible flow, non-Newtonian fluid, non-standard growth, generalized Lebesgue space, Musielak--Orlicz space, fluid-kinetic equations}
\subjclass[2000]{35K51, 35Q30, 35Q70, 76A05, 76D05}

\maketitle

\section{Introduction}
 We consider the system of partial differential equations
\begin{align}\label{sys:main_system}
    \left\{\begin{array}{ll}
    \partial_t f(t, x, v) + v\cdot\nabla_x f(t, x, v) + \DIV_v((u(t, x) - v)f(t, x, v)) = 0,\\
    \partial_t u(t, x) + \DIV_x (u(t,x)\otimes u(t, x)) + \nabla_x p(t, x) = \DIV_x S(t, x, Du) - \int_{\R^d}(u(t, x) - v)f(t, x, v)\diff v,\\
    \DIV_x u(t, x) = 0,
    \end{array}\right.
\end{align}
describing a kinetic, Vlasov-type equation coupled together with an incompressible, homogeneous, non-Newtonian fluid. Here, $f(t, x, v)$ is the particle distribution function in $(x, v)\in \Omega\times \R^d$, $u$ is the velocity of the fluid, $p$ denotes pressure, $S$ is the constitutively determined part of the Cauchy stress tensor depending on the symmetric gradient $Du$ and for simplicity, we consider that the density of the fluid is equal to one. We formulate our problem \eqref{sys:main_system} on a product $(0, T)\times\Omega\times\R^d$, where $\Omega \subset \mathbb{R}^d$ is a Lipschitz domain and $d$ denotes the dimension. The above system is completed by the specular reflection, and the no-slip boundary conditions, i.e.:
$$
u \equiv 0\text{, on }\partial\Omega,\qquad f(t, x, v) = f(t, x, v^*),\text{ for }x\in\partial\Omega,\text{ and }v\cdot n(x) < 0,
$$ 
where $v^* = v - 2(v\cdot n(x))n(x)$ is a specular reflection of the kinetic velocity $v$ and $n(x)$ is an outer normal vector of $\Omega$. We set the initial conditions as $f_0(x, v)$, $u_0(x)$. Building on the theory developed in \cite{bulicek2023nonnewtonian}, where a similar problem is considered, but without a kinetic component, our goal will be to prove the existence of weak solutions to the system \eqref{sys:main_system}. \\

 When it comes to the fluid-kinetic models, their applications are vast and range from modeling of reaction flows of sprays, aerosol therapy, atmospheric pollution, chemical engineering and go even to waste water treatment, dusk collection, and others, see \cite{caflish1983dynamic, orourke1981collectivedrop, ranz1952evaporation, ranz1952evaporation2, soo1990multiphase, williams1985combustion, desvillettes2010someaspects, dufour2005modelisation, baranger2005amodelling, Gemci2002anumerical, Boudin2015modelling}. In the present paper, we are interested in the so-called thin spray regime, which in opposition to the thick spray regime ignores the inter-particle dynamics, such as collisions or break-ups, hence the coupling between the kinetic description of the particle movement and the fluids equations is done through the drag force. For an instance of thick spray regime considerations we refer to \cite{boudin2003amodeling}, and for further citations and discussion to \cite{hamdache1998global}.\\ 

 The research on the fluid-kinetic models has been done for decades. Looking at compressible models, the existence of global weak solutions for Navier--Stokes coupled with Vlasov--Fokker--Planck equations in a bounded domain (with the same boundary conditions as the ones presented in this work) has been proven in \cite{mellet2007globalweak}. The proof existence of local-in-time classical solutions on the whole space for Euler--Vlasov equations has been carried out by Baranger and Desvillettes in \cite{baranger2006coupling}, and the similar results have been obtained by Mathiaud \cite{mathiaud2010local} for Euler--Vlasov--Blotzmann system. More recently Choi and Jung \cite{choi2022onregular} showed uniqueness and local-in-time existence of strong solutions for Vlasov--Navier--Stokes with degenerate viscosities and vacuum. Worth noting are also results \cite{chae2013global, choi2015global} where the authors investigated global existence of classical solutions close to the equilibrium in the periodic and the whole space case. \\

 Even more contributions have been made in an incompressible case, going back as early as in 1998 Hamdache \cite{hamdache1998global} showed the existence of global weak solutions for Vlasov--Stokes system in a bounded domain (for the same boundary conditions as in this work). This result was later extended by Yu \cite{yu2013global} to Vlasov--Navier--Stokes. In a periodic case Boudin et. al. \cite{boudin2009global} have proven the existence of global weak solutions for the same system, which was later extended in \cite{mucha2018flocking} to Vlasov/non-Newtonian system (including an alignment force), where the stress tensor has $L^{p-1}$ growth, $p\geq \frac{11}{5}$ and is generated by a potential, and most recently further generalized to general $p-1$ growth potentials $p > \frac{8}{5}$ in \cite{choi2025coupledvlasovnonnewtonianfluid}, but without the alignment force present. Further results on a Newtonian case, but with a time-dependent domain, and absorption boundary conditions for the kinetic component include \cite{boudin2017global, boudin2020global}, where the authors show the global existence of weak solutions for Vlasov--Navier--Stokes, and Vlasov--Navier--Stokes coupled with convection-diffusion equations respectively. We would like to also mention the hydrodynamical limits for light and fine particle regimes which were done in \cite{goudon2004hydrodynamic, goudon2004hydrodynamic2}.\\

 At last, worth noting are techniques used to treat the difficult kinetic terms that appear in the coupling between the two interacting systems. The most important of which is the velocity averaging lemma, which allows us to deduce compactness of the kinetic density $\int_{\R^d}f\diff v$ and the kinetic momentum $\int_{\R^d}v\,f\diff v$, from quite weak information on $f$ itself. We refer the reader to \cite{perthrame1998alimiting} and \cite{rein2004globalweak}. For historical reasons we also cite the preceeding works on less complicated kinetic systems \cite{agoshkov1984spaces, diperna1989onthecauchy, diperna1989global, diperna1991lpregularity, golse1988regularity}.\\

 Let us speak a while about a motivation of tackling the specific non-Newtonian model at hand. In most of the cases (see e.g. \cite{choi2025coupledvlasovnonnewtonianfluid, mucha2018flocking}), for models of non-Newtonian fluids, one assumes the Cauchy stress tensor to be of power type
\begin{align}\label{stress_power_type}
S(t,x,Du) \sim (\nu_0 + \nu_1\,|Du|^{p-2}) \, Du.
\end{align}
The first existence results concerning the system describing the non-Newtonian fluids (without the kinetic opponent) were proved for $p \geq \frac{11}{5}$ (in 3D) by Lions and Ladyzhenskaya in \cite{MR0259693, MR0254401}. After that, there have been many generalizations, from the higher regularity method in \cite{malek1996weakandmeasure} giving the bound $p\ge \frac{9}{5}$, followed by the $L^{\infty}$-truncation method for $p\ge \frac{8}{5}$, see \cite{MR1713880}, the Lipschitz truncation method for $p>\frac{6}{5}$, see \cite{MR2668872,MR2001659,MR3023393}, up to a new definition of a solution in \cite{MR4102807} leading to the theory for all $p>1$.  We refer the reader to the extensive review in \cite{MR4076814} in context of fluids with very complicated rheology.

 Our area of interest are the forms of the stress tensor $S$ analogous to \eqref{stress_power_type}, that is
\begin{equation}\label{eq:stress_tensor_var_exp}
S(t,x,Du) \sim (\nu_0 + \nu_1\,|Du|^{s(t,x)-2}) \, Du,
\end{equation}
where the exponent $s(t,x)$ depends on the time variable $t$ and the spatial variable $x$, which can be found in the literature under the name non-standard growth. The form \eqref{eq:stress_tensor_var_exp} appears in the analysis of the electrorheological fluids whose mechanical properties dramatically change when an external electric field is applied, see~\cite{RAJAGOPAL1996401}. To see the studies from the mathematical point of view, we recommend looking at~\cite{diening2011lebesgue,MR1930392, MR1810360, MR2037246, MR3474486, MR2346460, MR2610563, bulicek2023nonnewtonian}. Further generalizations to the micropolar fluids and also the chemically reacting fluids are available in \cite{MR3474486, MR3373570, MR3904139, MR3834434, ko2018existence}.\\

 The power-type structure such as the one in \eqref{eq:stress_tensor_var_exp} can be generalized as an inequality, which involves the growth and the coercivity of the stress tensor
\begin{equation}\label{eq:ass_in_intro}
c \, S(t, x, \xi) : \xi \geq | \xi|^{s(t, x)} + | S(t, x, \xi)|^{s'(t, x)} - h(t, x),
\end{equation}
where $c$ is some constant, $h \in L^{1}(\Omega_T)$ and $s'(t,x)$ is the H{\"o}lder conjugate exponent  to $s(t,x)$, i.e., $s':=\frac{s}{s-1}$. Analysis of \eqref{sys:main_system} with the Cauchy stress tensor of the form \eqref{eq:ass_in_intro} requires the concept of generalized Lebesgue spaces $L^{s(t,x)}$. We would like to mention as well, that one could generalize \eqref{eq:ass_in_intro} by replacing power-type function with a generalized $N$-function. This would require the application of the general Musielak--Orlicz spaces \cite{chlebicka2019book} as in \cite{MR3007700,MR3190317,MR2466805, MR2597212}. We remark that all results of this paper can be formulated in this setting but we decided not to do so for the sake of clarity. \\

 In the current work, we show the existence of global-in-time and large-data solutions to \eqref{sys:main_system} for exponents $s(t,x)$ being discontinuous in the time variable. The previous approaches were based on the so-called log-H{\"o}lder continuity of the exponent $s(t,x)$, which allows one to use the density of smooth functions in the space $L^{s(t,x)}$,  see \cite{MR3847479, cruz2013variable}. In fact, the log-H{\"o}lder continuity is necessary for the density to be true, see \cite[Example 6.12]{cruz2013variable}. Nevertheless, inspired by \cite{bulicek2023nonnewtonian}, where the problem analogous to \eqref{sys:main_system} was considered, but without the kinetic component, and with $s(t,x)$ being discontinuous in time and log-H{\"o}lder continuous in space (extending the results of \cite{chlebicka2019parabolic}), we do not require the smoothness of $s$ with respect to the time variable here. Such a case is not only interesting from a mathematical point of view, but appears naturally in physics, see for example the model in \cite{ruzicka2000electrorheological}. There, one can find a mathematical description of the behavior of the fluid composed of charged particles moving in the electric field ${E}(t,x)$, where ${E}(t,x)$ solves certain Maxwell equation. In this case, the exponent $s$ can be assumed to be a smooth function of $|{E}|^2$ cf. \cite[Chapter 4, eq. (1.1)--(1.7)]{ruzicka2000electrorheological} so that $s$ depends on $t$ and $x$. Furthermore, the regularity of $s$ with respect to $t$ and $x$ is the same as the regularity of $E(t,x)$. This way, the regularity of $s$ with respect to the space is inherited from the regularity of the field $E$. On the other hand, switching the electric field may cause "jumps" and therefore its irregularity in time.\\

 For further discussion on mathematics of the model \eqref{sys:main_system}, without the kinetic component we refer to \cite{bulicek2023nonnewtonian}, where one can find many detailed explanation on particular assumptions, and in fact, is the main inspiration for tackling this problem. In the mentioned paper, one can also find the technical results, which will be heavily exploited throughout current work.\\

 The structure of the paper is as follows. In Section~\ref{section:2}, we introduce the precise assumption on the Cauchy stress tensor $S$, on the variable exponent $s(t,x)$ and we formulate the main result of the paper. Then, we recall the results from \cite{bulicek2023nonnewtonian} regarding an abstract parabolic equation in Section~\ref{section:4} and show the validity of the local energy equality. Afterwards, in Section~\ref{section:5} we introduce a classical approximation problem for which the theory is available and in Section~\ref{section:6} we finally pass to the limit and finish the proof of the main theorem. The classical but auxiliary tools are recalled for the sake of clarity in Appendix.

\section{Preliminaries and the main result}\label{section:2}
 Let us start with the notation in the paper. By $d = 2, 3$, we denote the dimension of the space, $\Omega \subset \mathbb{R}^d$ is a Lipschitz spatial domain, $T > 0$ is the length of time interest, and $\R^d$ is the kinetic velocity vector space. We write $x$ for an element of $\Omega$, $t$ for an element of $(0,T)$, and $v$ for an element of $\R^d$. The corresponding parabolic domain will be denoted with $\Omega_T:= (0,T) \times \Omega$, and its specific subdomains as $\Omega_t = (0,t) \times \Omega$. Moving forward, for any $a,b\in \mathbb{R}^d$ we write $a\cdot b$ for the standard scalar product $\sum_{i = 1}^da_i b_i$. Similarly, the space $\symm$ denotes the space of symmetric $d\times d$ matrices and for any $A,B \in \symm$ we denote the scalar product $\mathrm{tr}(A^TB)$ by $A:B$. Next, the symbol $\otimes$ is reserved for the tensorial product, i.e., for $a,b\in \mathbb{R}^d$ we denote $a\otimes b \in \symm$ as $(a\otimes b)_{ij}:=a_ib_j$ for $i,j=1,\ldots, d$. Throughout the work, we will use the standard notation for Sobolev and Lebesgue function space and frequently do not distinguish between scalar-, vector- or matrix-valued functions. To shorten the notation, we frequently use the following simplifications. When $f \in L^p(\Omega)$, we simplify it to $f \in L^p_x$. Similarly, if $f \in L^p(0,T; L^q(\Omega))$, $f \in L^p(0,T; W^{1,q}(\Omega))$ or $f \in L^p(0,T; W^{1,q}_0(\Omega))$, then we write $f \in L^p_t L^q_x$, $f \in L^p_t W^{1,q}_x$ or $f \in L^p_t W^{1,q}_{0,x}$ respectively (here, $W^{1,q}(\Omega)$ and $W^{1, q}_0(\Omega)$ are the usual Sobolev spaces). Furthermore, whenever $f\in L^p(0, T; L^q(\Omega\times\R^d)$, we write $f\in L^p_t L^q_{x,v}$, and so on. Regarding any exponent $p \in [1,\infty]$, we denote by $p'$ its H{\"o}lder conjugate defined by the equation $\frac{1}{p} + \frac{1}{p'} = 1$. Symbol $\nabla_x u$ is used for the spatial gradient of $u$ and $Du$ denotes its symmetric part, i.e. $Du = \left(\nabla_x u + (\nabla_x u)^T\right)/2$. We mostly work in the variable exponent spaces $L^{s(t,x)}(\Omega_T)$, and for the more detailed discussion we refer to Appendix \ref{app:musielaki}. The last special function space related to the fluid mechanics is $ L^2_{0,\DIV}(\Omega)$, which is defined as a closure of the set $\{u\in C_c(\Omega; \mathbb{R}^d), \, \DIV u=0\}$ in $L^2(\Omega)$. The constant $C > 0$ refers to a universal constant, which may be different even in single considerations, but depends only on the data.\\

 As a last note, we would like to underline, that the set-up below for the fluid system is completely analogous to the one presented in \cite{bulicek2023nonnewtonian}.

\subsection*{Assumptions on data}
Let us now state the needed assumptions on the exponent function $s(t, x)$:
\begin{Ass}\label{ass:exponent_cont_space}
We assume that a measurable function $s(t,x): \Omega_T \to [1,\infty)$ satisfies the following:
\begin{enumerate}[label=(A\arabic*)]    \item\label{ass:cont} (continuity in space) $s(t,x)$ is a log-H\"older continuous functions on $\Omega$ uniformly in time, i.e. there is a constant $C$ such that for all $x, y \in \Omega$ and all $t \in [0,T]$
$$
|s(t,x) - s(t,y)| \leq -\frac{C}{\log|x-y|},
$$
\item\label{ass:usual_bounds_exp} (bounds) it holds that $ \frac{3d + 2}{d+2} =: s_{\text{min}}\leq s(t,x)  \leq s_{\text{max}} < +\infty$ for a.e. $(t,x) \in \Omega_T$.
\end{enumerate}
\end{Ass}
 For later purposes we also define an exponent $s_0$ as
\begin{equation} \label{s0}
s_0 := 3 + \frac{2}{d}.
\end{equation}
As mentioned in the introduction, the condition \ref{ass:cont} guarantees good approximation properties (with respect to spatial variable $x$) in the variable exponent space. As for the Assumption \ref{ass:usual_bounds_exp}, it is completely standard and appears in the classical Navier--Stokes considerations. It is connected to the  continuity of the three-linear form
$$
v\longmapsto \int_{\Omega_T} v\otimes v : \nabla v\diff x\diff t,
$$
which is used to obtain the energy equality.\\

 Next, we look at two technical lemmas regarding a possible decomposition of the spatial space $\Omega$. They are taken from \cite{bulicek2023nonnewtonian}, but since proofs are brief, we provide them for the sake of completeness.

\begin{lem}\label{lem:decomposition_Omega}
There exists $r>0$ and an open finite covering $\{\mathcal{B}^i_r\}_{i=1}^N$ of $\Omega$ by balls of radii $r$ such that if we define
$$
q_i(t) := \inf_{x \in \mathcal{B}_{2r}^i} s(t,x), \qquad r_i(t) := \sup_{x \in \mathcal{B}_{2r}^i} s(t,x), \qquad R_i(t) := q_i(t)\,\left(1+\frac{2}{d}\right)
$$
we have for all $i=1,\ldots, N$
$$
s_{\text{min}} \leq q_i(t) \leq s(t,x) \leq r_i(t) < R_i(t) 
\qquad \mbox{ on } (0,T) \times (\mathcal{B}^{i}_{2r} \cap \Omega)
$$
and
$$
R_i(t) - r_i(t) \geq \frac{s_{\text{min}}}{d}.
$$
\end{lem}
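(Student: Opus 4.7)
My plan is to exploit the log-Hölder continuity \ref{ass:cont} in order to make the spatial oscillation of $s(t,\cdot)$ strictly smaller than $\tfrac{s_{\min}}{d}$ on sufficiently small balls, uniformly in $t$, and then cover the compact set $\overline{\Omega}$ by finitely many such balls.

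First, I would observe that for any $r > 0$ with $4r < 1$ and any two points $x, y$ lying in a common ball of radius $2r$, the estimate $|x-y| < 4r < 1$ gives $\log|x-y| < 0$, and the map $t \mapsto -1/\log t$ is increasing on $(0,1)$. Hence, applying \ref{ass:cont} and taking a supremum over the ball yields
$$\sup_{x, y \in \mathcal{B}_{2r}} |s(t,x) - s(t,y)| \leq \frac{C}{|\log(4r)|},$$
uniformly in $t \in [0,T]$. The right-hand side tends to $0$ as $r \to 0^+$, so I may fix $r$ small enough that this quantity does not exceed $\tfrac{s_{\min}}{d}$.

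Next, I would use the compactness of $\overline{\Omega}$ to extract a finite subcover $\{\mathcal{B}^i_r\}_{i=1}^N$ from the open cover $\{\mathcal{B}_r(x) : x \in \overline{\Omega}\}$ of $\overline{\Omega}$. For any such $i$, the chain $s_{\min} \leq q_i(t) \leq s(t,x) \leq r_i(t)$ on $(0,T) \times (\mathcal{B}^i_{2r} \cap \Omega)$ is immediate from \ref{ass:usual_bounds_exp} and the definitions of $q_i, r_i$. The remaining two claims then follow simultaneously from the computation
$$R_i(t) - r_i(t) \;=\; \tfrac{2}{d} q_i(t) - (r_i(t) - q_i(t)) \;\geq\; \tfrac{2}{d}\, s_{\min} - \tfrac{s_{\min}}{d} \;=\; \tfrac{s_{\min}}{d},$$
which gives both the strict inequality $r_i(t) < R_i(t)$ and the quantitative gap $R_i(t) - r_i(t) \geq \tfrac{s_{\min}}{d}$ asserted in the lemma.

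There is no substantive obstacle here; the statement is essentially a quantitative repackaging of the log-Hölder continuity. The only point requiring a bit of care is to calibrate $r$ so that the oscillation is controlled by $\tfrac{s_{\min}}{d}$ rather than by the weaker $\tfrac{2s_{\min}}{d}$, since the weaker choice would secure the strict inequality $r_i < R_i$ but would not reach the quantitative lower bound on $R_i - r_i$ that the rest of the paper is going to rely on.
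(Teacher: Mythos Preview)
Your proposal is correct and follows essentially the same route as the paper's proof: both choose $r$ small enough, via the log-H\"older continuity \ref{ass:cont}, so that the oscillation of $s(t,\cdot)$ on each $\mathcal{B}^i_{2r}$ is at most $\tfrac{s_{\min}}{d}$, and then conclude using $q_i(t)\geq s_{\min}$. Your version is just slightly more explicit in unpacking the final inequality $R_i(t)-r_i(t)\geq \tfrac{s_{\min}}{d}$.
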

\begin{proof}
We cover $\Omega$ with balls $\mathcal{B}_r^i$ of equal radius $r$ and the only problem is to find radius $r$ satisfying assertions of the lemma. By \ref{ass:cont} in Assumption \ref{ass:exponent_cont_space} we can choose $r$ such that
$$
\sup_{x \in \mathcal{B}_{2r}^i \cap \Omega} s(t,x) - \inf_{x \in \mathcal{B}_{2r}^i \cap \Omega} s(t,x) \leq \frac{s_{\text{min}}}{d}.
$$
Since $s_{\text{min}} \leq q_i(t)$, the conclusion follows.
\end{proof}
\begin{Not}\label{not:zeta}
In what follows, we always consider the covering constructed in Lemma \ref{lem:decomposition_Omega}. We also write $\zeta_i$, $i = 1,\ldots, N$ for the partition of unity related to the open covering $\{\mathcal{B}^{i}_{r}\}$ of $\Omega$, that is $\mbox{supp}\,\zeta_i \subset \mathcal{B}^{i}_{r}$ and $\sum_{i=1}^N \zeta_i(x) = 1$ for all $x\in \Omega$.
\end{Not}

 Now, we observe that the fact $Du \in L^{s(t,x)}(\Omega_T)$ implies certain regularity properties.
\begin{lem}\label{lem:simple_integrability_lemma}
Suppose that Assumption \ref{ass:exponent_cont_space} holds true. Let $u$ be such that $Du \in L^{s(t,x)}(\Omega_T)$, $u \in L^1_t W^{1,1}_{0,x}$ and  $u\in L^{\infty}_t L^2_x$. Then, $u \in L^{q_i(t)}(0,T; W^{1,q_i(t)}(\mathcal{B}_{2r}^i)) \cap L^{s_{\text{min}}}(0,T;W^{1,s_{\text{min}}}_0(\Omega))$ for all $i=1,\ldots, N$. The norm of $u$ in these spaces depends only on
$
\|Du\|_{L^{s(t,x)}}, \|u\|_{L^{\infty}_t L^2_x}.
$
\end{lem}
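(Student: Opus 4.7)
The plan is to deduce both regularity statements by the same principle: where the pointwise inequality $q \leq s(t,x)$ holds, Young's inequality $a^q \leq 1 + a^{s(t,x)}$ transfers $L^{s(t,x)}$-control into $L^q$-control (up to an additive constant on a bounded set), and a Korn-type estimate converts the bound on the symmetric gradient $Du$ into one on $\nabla u$.

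\textbf{Step 1 (global $L^{s_{\text{min}}}$-regularity).} Since $s_{\text{min}} \leq s(t,x)$ everywhere on $\Omega_T$, the bound $|Du|^{s_{\text{min}}} \leq 1 + |Du|^{s(t,x)}$ together with $|\Omega_T| < \infty$ gives $Du \in L^{s_{\text{min}}}(\Omega_T)$. The hypothesis $u \in L^1_t W^{1,1}_{0,x}$ encodes zero trace on $\partial\Omega$, so slicewise-in-time Korn's first inequality yields $\|\nabla u(t)\|_{L^{s_{\text{min}}}(\Omega)} \leq C\|Du(t)\|_{L^{s_{\text{min}}}(\Omega)}$ for a.e.\ $t$. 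Raising to the $s_{\text{min}}$-th power and integrating in $t$, $\nabla u \in L^{s_{\text{min}}}(\Omega_T)$; Poincaré's inequality on $\Omega$ (zero trace) then gives $u \in L^{s_{\text{min}}}(0,T; W^{1,s_{\text{min}}}_0(\Omega))$, with norm controlled by $\|Du\|_{L^{s(t,x)}}$.

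\textbf{Step 2 (local $L^{q_i(t)}$-regularity on each $\mathcal{B}_{2r}^i$).} By Lemma \ref{lem:decomposition_Omega}, $q_i(t) \leq s(t,x)$ on $(0,T) \times (\mathcal{B}_{2r}^i \cap \Omega)$, so the same Young trick gives
\[\int_0^T \int_{\mathcal{B}_{2r}^i \cap \Omega} |Du|^{q_i(t)} \, dx\, dt \leq T|\mathcal{B}_{2r}^i| + \int_{\Omega_T} |Du|^{s(t,x)} \, dx\, dt < \infty.\]
Because $q_i(t)$ stays in the compact subinterval $[s_{\text{min}}, s_{\text{max}}] \subset (1, \infty)$, Korn's second inequality on the Lipschitz set $\mathcal{B}_{2r}^i \cap \Omega$ with exponent $q_i(t)$ holds with a constant uniform in $t$, so slicewise
\[\|\nabla u(t)\|_{L^{q_i(t)}(\mathcal{B}_{2r}^i)} \leq C\left(\|Du(t)\|_{L^{q_i(t)}(\mathcal{B}_{2r}^i)} + \|u(t)\|_{L^{q_i(t)}(\mathcal{B}_{2r}^i)}\right).\]
Raising to the $q_i(t)$-th power and integrating in time reduces everything to bounding the $u$-term, which I would handle by upgrading the Step 1 regularity via Sobolev embedding to $u \in L^{s_{\text{min}}}_t L^{s_{\text{min}}^*}_x$, where $s_{\text{min}}^* = d\,s_{\text{min}}/(d-s_{\text{min}})$ (with the obvious modification when $d = s_{\text{min}} = 2$), and then interpolating against the hypothesis $u \in L^\infty_t L^2_x$ by the Ladyzhenskaya–Prodi–Serrin scaling to obtain $u \in L^{s_0}_{t,x}$ with $s_0$ as in \eqref{s0}. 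A final Young inequality then produces $\int_0^T \|u(t)\|_{L^{q_i(t)}(\mathcal{B}_{2r}^i)}^{q_i(t)}\, dt < \infty$, closing the estimate.

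\textbf{Main obstacle.} The Young and Korn-based steps are routine; the subtle points are (i) ensuring uniform control of the Korn constant across the compact range $q_i(t) \in [s_{\text{min}}, s_{\text{max}}]$, for which one invokes a parameter-stable form of Korn's inequality, and (ii) producing the $L^{q_i(t)}$-integrability of $u$ itself on the ball, which is not immediate from the hypotheses and relies on combining the freshly-established global $W^{1,s_{\text{min}}}_0$-regularity with Sobolev embedding and a Ladyzhenskaya-type interpolation against the $L^\infty_t L^2_x$-bound. Tracking constants through this chain is what ultimately yields the dependence only on $\|Du\|_{L^{s(t,x)}}$ and $\|u\|_{L^\infty_t L^2_x}$ claimed in the lemma.
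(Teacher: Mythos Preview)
Your Step 1 matches the paper exactly. In Step 2, the structure (Young's inequality $|Du|^{q_i(t)} \leq 1 + |Du|^{s(t,x)}$ followed by Korn on the ball) is also the paper's route, but the version of Korn you invoke creates a genuine gap.

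You use the standard Korn's second inequality
\[
\|\nabla u(t)\|_{L^{q_i(t)}(\mathcal{B}_{2r}^i)} \leq C\left(\|Du(t)\|_{L^{q_i(t)}(\mathcal{B}_{2r}^i)} + \|u(t)\|_{L^{q_i(t)}(\mathcal{B}_{2r}^i)}\right),
\]
and then try to control $\int_0^T \|u(t)\|_{L^{q_i(t)}(\mathcal{B}_{2r}^i)}^{q_i(t)}\,dt$ by interpolating the global $L^{s_{\min}}_t W^{1,s_{\min}}_{0,x}$ regularity against $L^\infty_t L^2_x$ to land in $L^{s_0}_{t,x}$. That interpolation is sharp: the best diagonal exponent it delivers is exactly $s_0 = s_{\min}(1+2/d)$, and your ``final Young inequality'' needs $q_i(t) \leq s_0$. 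But nothing in Assumption~\ref{ass:exponent_cont_space} forces $s_{\max} \leq s_0$; if $s_{\max} > s_0$ (entirely permitted), then $q_i(t)$ can exceed $s_0$ on some balls and your bound on the lower-order term fails.

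The paper avoids this detour by using a form of Korn's inequality in which the lower-order term is the $L^2$-norm rather than the $L^{q_i(t)}$-norm:
\[
\|\nabla u(t)\|_{L^{q_i(t)}(\mathcal{B}_{2r}^i)} \leq C(r, s_{\min}, s_{\max})\left(\|Du(t)\|_{L^{q_i(t)}(\mathcal{B}_{2r}^i)} + \|u(t)\|_{L^2(\Omega)}\right).
\]
This is legitimate because the kernel of $D$ consists of rigid motions, a finite-dimensional space on which any norm (in particular $L^2$) is equivalent to $L^{q_i(t)}$ with constants uniform over the compact range $[s_{\min}, s_{\max}]$. With this version the lower-order term is controlled directly by the hypothesis $u \in L^\infty_t L^2_x$, and raising to the $q_i(t)$-th power and integrating in time closes the estimate with no interpolation needed.
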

\begin{proof}
As $q_i(t) \leq s(t,x)$ on $\mathcal{B}_{2r}^i$, we deduce $Du \in L^{q_i(t)}(0,T; L^{q_i(t)}(\mathcal{B}_{2r}^i))$. Then, the generalized Korn inequality implies that (for fix $t$ and $i$)
$$
\|\nabla u\|_{L^{q_i(t)}(\mathcal{B}_{2r}^i)}\le C(r, s_{\text{min}}, s_{\text{max}})(\|D u\|_{L^{q_i(t)}(\mathcal{B}_{2r}^i)} + \|u\|_2).
$$
Then rasing the inequality to the $q_i(t)$ power, integrating over $t\in (0,T)$ and using the assumptions on $u$ we have the first part of the statement. The second statement can proved exactly in the same way using that $s_{\text{min}} \leq s(t,x)$ on $\Omega_T$ and the standard Korn inequality for functions having zero trace.
\end{proof}

\subsection*{Assumptions on the stress tensor}
Concerning the stress tensor $S: (0, T) \times \Omega\times \symm \to \symm$ we assume the following:
\begin{Ass}\label{ass:stress_tensor} We assume that
\begin{enumerate}[label=(S\arabic*)]
\item \label{T1}$S(t,x, \xi)$ is a Carath\'{e}odory function and $S(t, x, 0) = 0$,
\item \label{coercitivity_stress_tensor} (coercivity and growth conditions) There exists a positive constant $c$  and a non-negative, integrable function $h(t, x)$, such that for any $\xi\in\symm$ and almost every $(t, x) \in \Omega_T$
$$
c \, S(t, x, \xi) : \xi \geq | \xi|^{s(t, x)} + | S(t, x, \xi)|^{s'(t, x)} - h(t, x)
$$
\item \label{monotonicity_stress_tensor}(monotonicity) S is monotone, i.e.:
$$
(S(t, x, \xi_1) - S(t, x, \xi_2)) : (\xi_1 - \xi_2) \geq 0
$$
for almost every $(t, x) \in \Omega_T$.
\end{enumerate}
\end{Ass}

\subsection*{Main result}
Having introduced all the needed notation, we may finally state the main theorem.
\begin{thm}\label{thm:the_main_result}
Let $S(t, x, \xi)$ satisfy the Assumption~\ref{ass:stress_tensor} with the exponent $s(t, x)$ satisfying Assumption~\ref{ass:exponent_cont_space}. Then, for any $f_0\in L^\infty_{x,v}$, $(1 + |v|^2)f_0\in L^1_{x,v}$, and any $u_0\in L^2_{0,\DIV}(\Omega)$, there exists $f\in L^\infty_t (L^1\cap L^\infty)_{x,v}$, $f\geq 0$ and $u\in L^\infty_t L^2_x \cap L^{s_{\text{min}}}_t W^{1, s_{\text{min}}}_{0, x}$, $Du\in L^{s(t, x)}(\Omega_T)$, such that $\DIV u=0$ almost everywhere in $\Omega_T$, and the following equations are satisfied
\begin{align}\label{eq:the_main_result_kinetic}
    \int_{0}^T\int_{\Omega\times\R^d}f(\p_t\phi + v\cdot\nabla_x\phi + (u- v)\cdot\nabla_v\phi)\diff v\diff x\diff t = \int_{\Omega\times\R^d}f_0\,\phi(0, x, v)\diff v\diff x
\end{align}
for any $\phi\in C^\infty_c([0, T)\times \overline{\Omega}\times\R^d)$, as well as
\begin{equation}\label{eq:the_main_result}
\begin{split}
    \int_{\Omega_T}-u  \cdot \partial_t \phi &- u\otimes u : \nabla_x\phi + S(t, x, Du):D\phi\diff x\diff t\\
    &= -\int_{\Omega_T\times\R^d}(u - v)f \cdot \phi\diff v\diff x\diff t + \int_{\Omega}u_0\cdot \phi(0,x)\diff x
\end{split}
\end{equation}
for any $\phi\in C^\infty_c([0, T) \times \Omega)$ fulfilling $\DIV \phi = 0$ almost everywhere in $\Omega_T$.
\end{thm}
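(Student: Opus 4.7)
The strategy is the standard three-step scheme for coupled fluid--kinetic systems of this type: build an approximate problem whose solvability is classical, derive uniform a priori estimates, and then pass to the limit by combining velocity averaging on the kinetic side with the monotonicity/local energy equality machinery of Section~\ref{section:4} on the fluid side.

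\emph{Approximation.} I would introduce a regularization parameter $\eps$ and replace $S$ by $S^\eps(t,x,\xi):=S(t,x,\xi)+\eps|\xi|^{s_0-2}\xi$ with $s_0$ as in~\eqref{s0}, so that the convective term is a priori well defined and the auxiliary problem falls under the abstract parabolic theory invoked in Section~\ref{section:5}. The coupling is decoupled by a Schauder--Tychonoff fixed point: given $\bar u$, solve the Vlasov equation with specular reflection via DiPerna--Lions regular Lagrangian flows, producing $f^\eps[\bar u]\in L^\infty_{t,x,v}$ with $(1+|v|^2)f^\eps\in L^\infty_tL^1_{x,v}$; then solve the regularized fluid equation with drag source $\int_{\R^d}(\bar u-v)f^\eps\diff v$ by a Galerkin scheme in $L^2_{0,\DIV}(\Omega)$, and close the loop.

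\emph{A priori estimates.} Testing the kinetic equation against $1$ and $|v|^2/2$, testing the fluid equation against $u^\eps$ and adding, the cross terms match exactly and yield the coupled energy identity
\begin{equation*}
\tfrac12\|u^\eps(t)\|_2^2+\tfrac12\!\int_{\Omega\times\R^d}\!|v|^2 f^\eps(t)\diff v\diff x+\int_{\Omega_t}\!S^\eps(Du^\eps)\!:\!Du^\eps\diff x\diff\tau+\int_{\Omega_t\times\R^d}\!f^\eps|u^\eps-v|^2\diff v\diff x\diff\tau=C_0,
\end{equation*}
in which the drag enters as a nonnegative dissipation. Combined with~\ref{coercitivity_stress_tensor} this gives $u^\eps\in L^\infty_tL^2_x$ and $Du^\eps\in L^{s(t,x)}(\Omega_T)$ uniformly in $\eps$; Lemma~\ref{lem:simple_integrability_lemma} then upgrades to $u^\eps\in L^{s_{\text{min}}}_tW^{1,s_{\text{min}}}_{0,x}$. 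The threshold $s_{\text{min}}\geq(3d+2)/(d+2)$ is exactly what is needed to control both the convective term $u^\eps\otimes u^\eps$ and the higher moments $\int|v|^k f^\eps\diff v$ needed to place the drag source in the dual of the fluid test space, which yields a uniform estimate on $\partial_t u^\eps$.

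\emph{Passage to the limit.} Weak/weak-$\ast$ limits $u^\eps\rightharpoonup u$, $f^\eps\overset{\ast}{\rightharpoonup} f$, and $S^\eps(Du^\eps)\rightharpoonup\chi$ are automatic. Aubin--Lions gives strong convergence of $u^\eps$ in $L^2(\Omega_T)$, and the velocity averaging lemma yields strong convergence of the macroscopic moments $\rho^\eps=\int f^\eps\diff v$ and $j^\eps=\int vf^\eps\diff v$; these suffice to pass to the limit in the kinetic equation~\eqref{eq:the_main_result_kinetic} and in the drag source of the fluid equation. \emph{The genuine obstacle} is the identification $\chi=S(\cdot,\cdot,Du)$: because $s(t,x)$ is merely measurable in $t$, smooth functions are not dense along time in $L^{s(t,x)}(\Omega_T)$, so one cannot mollify $u$ in $t$ to use it as a test function in the limit equation. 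Following the approach of \cite{bulicek2023nonnewtonian}, I would exploit the local energy equality recalled in Section~\ref{section:4} applied to $\zeta_i u^\eps$ on each ball $\mathcal{B}^i_{2r}$ of Lemma~\ref{lem:decomposition_Omega}, where the sandwich $q_i(t)\leq s(t,x)\leq r_i(t)<R_i(t)$ permits the integration by parts in time, then sum over $i$ using the partition~\ref{not:zeta} and invoke~\ref{monotonicity_stress_tensor} in a Minty-type argument to conclude $\chi=S(\cdot,\cdot,Du)$ and simultaneously let $\eps\to 0$. The subtle point specific to the present coupled system is that the localized identity now contains the additional term $\int f^\eps(u^\eps-v)\cdot\zeta_i u^\eps\diff v$, whose convergence to the expected limit must be verified using precisely the uniform drag-dissipation bound and the velocity averaging compactness of $j^\eps-\rho^\eps u^\eps$ obtained in the previous step.
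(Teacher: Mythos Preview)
Your overall architecture---regularize the stress, solve the approximate coupled problem, derive uniform energy estimates, use velocity averaging plus Aubin--Lions for compactness, and close with the local energy equality of Section~\ref{section:4} and a Minty argument---matches the paper's route. Two points, however, are not right as written.

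\textbf{The regularization exponent.} You add $\eps|\xi|^{s_0-2}\xi$ with $s_0=3+\tfrac{2}{d}$, but nothing prevents $s_{\max}>s_0$, in which case $S$ itself still dominates at infinity and the approximate problem retains the full variable-exponent structure; it does \emph{not} ``fall under the abstract parabolic theory'' you invoke. The paper instead takes $S^\theta(t,x,\xi)=S(t,x,\xi)+\theta\nabla_\xi m(|\xi|)$ with $m(|\xi|)=|\xi|^{s_{\max}}$ (see~\eqref{eq:def_of_reg_operator}), precisely so that $S^\theta$ has coercivity and growth in the fixed Lebesgue space $L^{s_{\max}}$ (Lemma~\ref{lem:prop_of_Stheta}\ref{coercitivity_stress_tensor_theta_r}); this is what makes the approximate problem classical and justifies testing with $u^\theta$ to obtain the energy \emph{equality}~\eqref{app:energy}.

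\textbf{The local energy identity.} Your description ``apply the local energy equality to $\zeta_i u^\eps$ on each ball and sum over $i$'' misreads the mechanism of Theorem~\ref{thm:local_energy_equality}. One does not test with $\zeta_i u^\eps$. Rather, one first reconstructs the pressures $p_1^i,p_2^i,p_3,p_4$ via~\eqref{eq:defp1A}--\eqref{eq:defp4} and a \emph{harmonic} remainder $p_h$, so that the equation~\eqref{eq:distributional_id_with_ph} holds for \emph{all} test functions $\varphi\in C^1_c([0,T)\times\Omega)$ (not only divergence-free ones). The local energy equality~\eqref{eq:local_energy_equality} is then the identity for $\tfrac12\int_\Omega|u+\nabla p_h|^2\psi\,\diff x$, with a single cutoff $\psi\in C_c^\infty(\Omega)$; the partition $\{\zeta_i\}$ enters only through the \emph{definition} of the pressures $p_1^i,p_2^i$ and in the convergence analysis of the pressure terms~\eqref{eq:conv_press_p_1_p_2_theta}, where the sandwich $q_i(t)\le s(t,x)\le r_i(t)<R_i(t)$ is used to match integrabilities on $\mathcal{B}^i_{2r}$ versus its complement. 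The harmonic pressure $p_h$ is the device that makes the time derivative of the localized energy tractable, and you will need its local smoothness and the strong convergences~\ref{conv_item_est_add_1}--\ref{conv_item_est_add_2} to pass to the limit. Your remark about the extra drag term is correct in spirit: in the paper it appears as $\int F^\theta\cdot(u^\theta+\nabla p_h^\theta)\psi$, and its convergence~\eqref{eq:conv_energy_with_theta_2} follows from the weak $L^2$ convergence of $F^\theta$ (a consequence of the uniform drag dissipation plus velocity averaging, i.e.~\ref{conv_theta_forcing_term}) against the strong local convergence of $u^\theta+\nabla p_h^\theta$.
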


\section{Local energy equality}\label{section:4}

 In the present section, we recall and combine several results from \cite{bulicek2023nonnewtonian}, which are applied to a general abstract identity
\begin{align}\label{eq:gen_identity_def}
    \int_{\Omega_T}-u \cdot \p_t \phi - u\otimes u:\nabla_x\phi + (\alpha +\theta\, \beta) : D\phi  \diff x \diff t = \int_{\Omega}u_0(x) \cdot\phi(0, x) \diff x + \int_{\Omega_T}F  \cdot\phi \diff x \diff t
\end{align}
which is required to be satisfied for any vector-valued $\phi \in C^\infty_c([0, T) \times \Omega)$ fulfilling $\DIV \phi = 0$ in $\Omega_T$. Here, $\alpha, \beta: \Omega_T \to \symm$, $\theta \geq 0$ and the term $\theta \, \beta$ can be seen as a regularizing term. Whenever $\theta = 0$ we assume
\begin{equation}\label{eq:regularity_estimates_u}
u \in L^\infty_t L^2_x \cap L^{R_i(t)}((0,T)\times \mathcal{B}^i_{2r}) \cap L^{s_0}_{t,x}, \qquad Du \in L^{s(t,x)}(\Omega_T)
\end{equation}
\begin{equation}\label{eq:regularity_estimates_not_u_theta_0}
\alpha \in L^{s'(t,x)}(\Omega_T) \mbox{ is a symmetric matrix}, \qquad F\in L^{2}_{t,x},
\end{equation}
where, we recall \eqref{s0}, i.e., that  $s_0 = 3 + \frac{2}{d}$. Whereas for $\theta > 0$, we have a stronger assumption
\begin{equation}\label{eq:regularity_estimates_not_u_2}
\beta \in L^{s_{\max}'}(\Omega_T) \mbox{ is a symmetric matrix}, \quad Du \in L^{s_{\max}}(\Omega_T), \qquad \theta > 0,
\end{equation}
which shall disappear for $\theta \to 0^+$. The uniform estimates with respect to $\theta \geq 0$, will only be obtained only in terms of~\eqref{eq:regularity_estimates_u}--\eqref{eq:regularity_estimates_not_u_theta_0}. We want to note here, that $s_0\le R_i(t)$, which follows from the definition of $R_i(t)$ in Lemma~\ref{lem:decomposition_Omega}, the definition of $s_0$ in \eqref{s0} and the assumption $s_{\text{min}}\ge \frac{3d+2}{d+2}$. \\

 To properly state the theorems below, we need to define the pressures in our system. Let us extend all functions to $\R^d$ with zero and denote by $p_1^i$, $p_2^i$, $p_3$, $p_4$ as the unique functions satisfying for almost all $t\in (0,T)$
\begin{align}
 -\Delta p_{1}^i &= \DIV \DIV (\alpha \, \zeta_i) \mbox{ in } {\R^d}, \quad   &&{p_1^i(t,\cdot) \in L^{r_i'(t)}(\R^d)}, \label{eq:defp1A}\\
 \Delta p_{2}^i &= \DIV \DIV (u\otimes u \, \zeta_i) \mbox{ in } \R^d, \quad   &&{p_2^i(t,\cdot) \in L^{R_i(t)/2}(\R^d)}, \label{eq:defp2A}\\
 -\Delta p_3 &= \DIV F \mbox{ in } \R^d, \quad  &&p_3(t,\cdot) \in W^{1,2}_{\textrm{loc}}(\R^d), \label{eq:defp3} \\
 -\Delta p_4 &= \DIV \DIV (\theta\, \beta) \mbox{ in } {\R^d}, \quad  &&{p_4(t,\cdot) \in L^{s'_{\max}}(\R^d)}. \label{eq:defp4}
\end{align}

 Then, we obtain the following.

\begin{lem}[Lemma 4.2, \cite{bulicek2023nonnewtonian}]\label{lem:integrability_of_pi}
There exists uniquely determined $p_1^i$, $p_2^i$, $p_3$, $p_4$ satisfying \eqref{eq:defp1A}--\eqref{eq:defp4}. Moreover, there exists a constant depending only on $T$ and ${\Omega}$ such that
\begin{align*}
&\|p_{1}^i\|_{L^{r_i^{'}(t)}_{t,x}} \leq C \|\alpha \, \zeta_i\|_{L^{r_i^{'}(t)}_{t,x}},    &&\|p_{2}^i\|_{L^{R_i(t)/2}_{t,x}} \leq C \|u \, \sqrt{\zeta_i}\|_{L^{R_i(t)}_{t,x}}^2,\\
&\|p_{1}^i\|_{L^{{s'_{\max}}}_{t,x}} \leq C\, \| \alpha \|_{L^{{s'_{\max}}}_{t,x}},
&&\|p_{2}^i\|_{L^{{s_{0}}/2}_{t,x}} \leq C\, \| u \|^2_{L^{{s_{0}}}_{t,x}}\\
&\|p_3\|_{L^2_t W^{1,2}_x} \leq C\, \|F\|_{L^2_{t,x}},  &&\|p_4\|_{L^{{s'_{\max}}}_{t,x}} \leq C\, \| \theta\,\beta\|_{L^{{s'_{\max}}}_{t,x}}
\end{align*}
Moreover, for each bounded $\Omega' \Subset \R^d \setminus \mathcal{B}^i_{r}$ we have
$$
\| p_{1}^i \|_{L^{s'_{\max}}_t(0,T; C^{k}_x(\Omega'))} \leq C\left(T,k,\Omega'\right) \| \alpha \|_{L^{{s'_{\max}}}_{t,x}},
$$
$$
\| p_{2}^i \|_{L^{s_{0}/2}_t(0,T; C^{k}_x(\Omega'))} \leq C\left(T,k,\Omega'\right) \| u \|^2_{L^{{s_{0}}}_{t,x}}.
$$
\end{lem}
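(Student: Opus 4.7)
The plan is to construct each of the four pressures via the explicit Riesz transform representation on $\R^d$, pointwise in $t$, then derive the stated bounds from the classical Calderón--Zygmund theory and integrate in time. Uniqueness in each stated class follows from a Liouville argument: any two candidate solutions differ by a harmonic tempered distribution, and the only such distributions which are $L^q(\R^d)$--integrable for finite $q$ are constant (and in the $p_3$--case the relevant norm controls $\nabla p_3$, so the ambiguity modulo a constant is irrelevant).

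For $p_1^i$, $p_2^i$, and $p_4$ I would write, for a.e.\ $t\in(0,T)$,
\[
p_1^i = \sum_{j,k} R_j R_k (\alpha_{jk}\zeta_i),\qquad p_2^i = -\sum_{j,k} R_j R_k (u_j u_k\zeta_i),\qquad p_4 = \sum_{j,k} R_j R_k(\theta\,\beta_{jk}),
\]
where $R_j := \partial_j(-\Delta)^{-1/2}$ are the scalar Riesz transforms. These expressions solve \eqref{eq:defp1A}, \eqref{eq:defp2A} and \eqref{eq:defp4} in the sense of tempered distributions. The $L^q(\R^d)$ boundedness of $R_j R_k$ for $1<q<\infty$, with constant continuous in $q$, yields pointwise in time the estimates
\[
\|p_1^i(t,\cdot)\|_{L^{r_i'(t)}(\R^d)} \le C(r_i'(t))\,\|\alpha(t,\cdot)\zeta_i\|_{L^{r_i'(t)}(\R^d)},
\]
and analogously for $p_4$; for $p_2^i$ one uses $(u\otimes u)\zeta_i = (u\sqrt{\zeta_i})\otimes(u\sqrt{\zeta_i})$, so the pointwise norm equals $|u\sqrt{\zeta_i}|^2$, and the $L^{R_i(t)/2}$ bound is exactly $\|u\sqrt{\zeta_i}\|_{L^{R_i(t)}}^{2}$. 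Since by Assumption~\ref{ass:exponent_cont_space} and Lemma~\ref{lem:decomposition_Omega} the exponents $r_i'(t)$ and $R_i(t)/2$ lie in a fixed compact subinterval of $(1,\infty)$ independent of $t$ and $i$, the Calderón--Zygmund constants are uniformly bounded in $t$. Raising each inequality to the appropriate power and integrating in $t$ yields the four estimates stated in the first block.

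For $p_3$, since $F\in L^2_{t,x}$, I would use directly the Helmholtz/Leray projection: $\nabla p_3 = -\nabla(-\Delta)^{-1}\DIV F$ is a Riesz-type operator bounded on $L^2(\R^d)$, hence $\|\nabla p_3(t,\cdot)\|_{L^2(\R^d)} \le C\|F(t,\cdot)\|_{L^2(\R^d)}$; integrating over $(0,T)$ and fixing the additive constant to normalize $p_3$ gives the desired $L^2_t W^{1,2}_x$ estimate.

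For the $C^k_x$ estimates on $\Omega'\Subset\R^d\setminus \mathcal{B}_r^i$, I would use the explicit Newtonian-potential representation
\[
p_1^i(t,x) = c_d\,\partial_j\partial_k \int_{\mathcal{B}_r^i} \frac{(\alpha\zeta_i)_{jk}(t,y)}{|x-y|^{d-2}}\diff y,\qquad x\notin \mathcal{B}_r^i,
\]
and analogously for $p_2^i$. Because $\alpha\zeta_i$ is supported in $\mathcal{B}_r^i$ and $\dist(\Omega',\mathcal{B}_r^i)>0$, the kernel and all its derivatives are smooth and bounded on $\Omega'\times\mathcal{B}_r^i$, so any $C^k_x(\Omega')$ norm of $p_1^i(t,\cdot)$ is controlled by $\|\alpha(t,\cdot)\|_{L^1(\mathcal{B}_r^i)}$, which by Hölder is bounded by $C(T,\Omega')\|\alpha(t,\cdot)\|_{L^{s'_{\max}}(\R^d)}$; taking the $L^{s'_{\max}}$ norm in time closes the estimate, and the argument for $p_2^i$ is identical with $s_0/2$ in place of $s'_{\max}$.

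The only genuine subtlety is maintaining uniform control of the Calderón--Zygmund constant as the integrability exponent varies with $t$; this is the reason why the uniform lower bound $s_{\min}$ and the uniform upper bound $s_{\max}$ from Assumption~\ref{ass:exponent_cont_space} are crucial — they keep all relevant exponents in a single compact subinterval of $(1,\infty)$ and thus decouple the spatial Fourier-multiplier estimate from any $t$-dependence.
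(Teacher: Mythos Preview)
Your argument is correct and is the standard route: Riesz-transform/Calder\'on--Zygmund bounds pointwise in $t$ with constants uniform on compact subintervals of $(1,\infty)$, Liouville for uniqueness, and the Newtonian-potential representation to harvest $C^k$ regularity away from the support of $\zeta_i$. The paper itself does not supply a proof of this lemma --- it is quoted verbatim from \cite{bulicek2023nonnewtonian} --- and the proof there follows exactly the mechanism you outline. Two cosmetic remarks: in dimension $d=2$ the fundamental solution is logarithmic rather than $|x-y|^{2-d}$, so the displayed kernel formula should be read accordingly; and for $p_3$ the paper's notation $L^2_t W^{1,2}_x$ is to be understood locally (or over $\Omega$), consistent with the defining requirement $p_3(t,\cdot)\in W^{1,2}_{\mathrm{loc}}(\R^d)$, so your ``fixing the additive constant'' is exactly what is needed to control the zeroth-order part via Poincar\'e on~$\Omega$.
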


 At last, we can look at the main result of this section reads, concerning the extension of \eqref{eq:gen_identity_def}, and the local energy equality.

\begin{thm}[Theorem 4.1, \cite{bulicek2023nonnewtonian}]\label{thm:local_energy_equality}
Let $u$ be a solution of \eqref{eq:gen_identity_def}. Then, with $p_i$ as above,
there exists a uniquely determined function $p_h \in L^{\infty}(0,T; L^{s'_{\max}}(\Omega))$ (up to condition $\int_{\Omega} p_h(t,x) \diff x  = 0$ for almost all $t\in (0,T)$), such that for all $\varphi \in C^1_c([0,T)\times \Omega)$
\begin{equation}\label{eq:distributional_id_with_ph}
\begin{split}
-\int_{\Omega_T} &(u+\nabla p_h)\cdot \partial_t\varphi +(- u \otimes u + \alpha + \theta \beta) : \nabla \varphi \diff x \diff t-\int_{\Omega} u_0(x)\cdot \varphi(0,x)  \diff x \\
&= \int_{\Omega_T} F\, \cdot \varphi  - \sum_{i=1}^N (p_1^i + p_2^i) \,\DIV \varphi -  (p_3 + p_4) \, \DIV \varphi \diff x \diff t
\end{split}
\end{equation}
Moreover, $p_h$ is harmonic and so, it is locally smooth in the spatial variable, i.e., it satisfies
\begin{equation}\label{eq:pressure_is_harmonic}
\Delta p_h = 0 \mbox{ in } \Omega \mbox{ for a.e. } t \in [0,T].
\end{equation}
In addition, we have the following estimate valid for all $\Omega' \Subset \Omega$
\begin{equation}\label{eq:thm_estimate_on_p_Hol}
\begin{split}
\|p_h\|_{L^{\infty}_t L^{s'_{\max}}_x}+ \|p_h\|_{L^{\infty}(0,T;C^k(\Omega'))} \leq C,
\end{split}
\end{equation}
where the constant $C$ depends on $k$, $\Omega'$, $T$ and the norms $\|\alpha\|_{L^{s'(t,x)}_{t,x}}$, $\|Du\|_{L^{s(t,x)}_{t,x}}$, $\|\theta \, \beta\|_{L^{s'_{\max}}_{t,x}}$,
 $\|F\|_{L^2_{t,x}}$, $\|u\|_{L^{\infty}_t L^2_x}$ and $\|u_0\|_{L^{2}_x}$. Finally, the following local energy equality holds: for any $\psi\in C^\infty_c(\Omega)$ and for a.e. $t \in (0,T)$
\begin{equation}\label{eq:local_energy_equality}
\begin{split}
    &\frac{1}{2}\int_\Omega |u(t, x) + \nabla p_h(t, x)|^2 \, \psi(x) \diff x + \int_{0}^t \int_\Omega  (\alpha + \theta \beta):D(\psi(x)(u + \nabla p_h)(\tau,x)) \diff x \diff \tau = \\
    &= \frac{1}{2}\int_{\Omega}  |u_0(x)|^2 \, \psi(x) \diff x + \int_{0}^t \int_\Omega  (u\otimes u) : \nabla (\psi (u + \nabla p_h))\diff x\diff \tau + \\
    &\phantom{=} +  \int_{0}^t\int_{\Omega}F(u + \nabla p_h)\psi\diff x \diff \tau - \int_{0}^t \int_\Omega  \left(\sum_{i=1}^N (p_1^i + p_2^i) + p_3 + p_4\right)\,(u + \nabla p_h )\nabla\psi \diff x \diff \tau.
\end{split}
\end{equation}
\end{thm}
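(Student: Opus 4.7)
\medskip

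\textbf{Plan of proof.} My strategy splits into three stages: construction of the pressure decomposition, verification of harmonicity and estimates on $p_h$, and finally the energy identity via a carefully mollified test function.

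\emph{Stage 1 (construction of $p_h$).} The identity \eqref{eq:gen_identity_def} says that, as a distribution on $(0,T)\times\Omega$, one has $\partial_t u + \DIV(u\otimes u) - \DIV(\alpha+\theta\beta) - F = -\nabla P$ for some $P$, by de Rham's theorem applied slicewise (or equivalently by Ne\v{c}as' negative-norm theorem). To pin down $P$, I first extend everything by zero outside $\Omega$ and use the partition of unity $\{\zeta_i\}$ from Notation~\ref{not:zeta}, so that $\sum_i\DIV\DIV(\alpha\,\zeta_i)=\DIV\DIV\alpha$ and similarly for $u\otimes u$. The four Poisson problems \eqref{eq:defp1A}--\eqref{eq:defp4} produce pieces $p_1^i,p_2^i,p_3,p_4$ on $\R^d$ whose negative Laplacians sum to exactly $\DIV F + \DIV\DIV(\alpha+\theta\beta) - \DIV\DIV(u\otimes u)$. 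I define
$$
p_h := P - \sum_{i=1}^N(p_1^i + p_2^i) - p_3 - p_4,
$$
normalizing $\int_\Omega p_h(t,\cdot)=0$; the integrability pieces from Lemma~\ref{lem:integrability_of_pi} show each summand on the right is well-defined, and then \eqref{eq:distributional_id_with_ph} follows by rewriting \eqref{eq:gen_identity_def} with general (not necessarily divergence-free) $\varphi$ and relocating the $\DIV\varphi$ contributions into the appropriate pressures.

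\emph{Stage 2 (harmonicity and estimates).} To check \eqref{eq:pressure_is_harmonic} I test \eqref{eq:distributional_id_with_ph} with $\varphi(t,x)=\chi(t)\nabla\psi(x)$ for $\chi\in C^\infty_c((0,T))$, $\psi\in C^\infty_c(\Omega)$. Integration by parts in $x$ turns $\nabla\varphi$-terms into $\nabla^2\psi$-terms; using the defining PDEs \eqref{eq:defp1A}--\eqref{eq:defp4}, every contribution from the data $u\otimes u,\alpha,\theta\beta,F$ cancels, leaving $\iint p_h(t,x)\,\chi(t)\Delta\psi(x)\diff x\diff t=0$. By the arbitrariness of $\chi,\psi$ we conclude $\Delta p_h(t,\cdot)=0$ in $\Omega$ for a.e.\ $t$. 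The global bound $\|p_h\|_{L^\infty_t L^{s'_{\max}}_x}$ in \eqref{eq:thm_estimate_on_p_Hol} comes from the de Rham construction combined with the estimates of Lemma~\ref{lem:integrability_of_pi} and the hypotheses \eqref{eq:regularity_estimates_u}--\eqref{eq:regularity_estimates_not_u_2}; local $C^k$-smoothness is standard interior elliptic regularity (Weyl/mean-value), using that $p_h$ is harmonic in $\Omega$.

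\emph{Stage 3 (local energy equality).} The key observation is that, although $u$ alone does not have a usable time derivative, the combination $u+\nabla p_h$ does, thanks to \eqref{eq:distributional_id_with_ph}: for test fields without divergence constraint, it behaves like the primitive of $\DIV(\alpha+\theta\beta - u\otimes u)+F$ modulo gradient terms that pair nicely with $\psi(u+\nabla p_h)$. I would regularize in time by a Steklov average, replace the natural test function $\psi(x)(u+\nabla p_h)(t,x)\,\mathbbm{1}_{(0,t_0)}(\tau)$ by its mollified/smooth-cutoff version, apply it in \eqref{eq:distributional_id_with_ph}, and pass to the limit. The time-derivative term yields $\tfrac12\int_\Omega|u+\nabla p_h|^2\psi|_{0}^{t_0}$ by the chain rule; the $\alpha+\theta\beta$ term produces the dissipation; and the pressure contributions match the last sum in \eqref{eq:local_energy_equality} because $\DIV[\psi(u+\nabla p_h)]=(u+\nabla p_h)\cdot\nabla\psi$ (using $\DIV u=0$ and $\Delta p_h=0$).

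\emph{Main obstacle.} The delicate point is the convective term $\iint u\otimes u:\nabla[\psi(u+\nabla p_h)]$. For it to converge under Steklov averaging I need $u\in L^{s_0}_{t,x}$ with $s_0=3+2/d$, since then $u\otimes u\in L^{s_0/2}_{t,x}$ pairs with $\nabla u\in L^{s_{\min}}_{t,x}$ (recovered via the generalized Korn inequality of Lemma~\ref{lem:simple_integrability_lemma}) thanks to the Sobolev embedding $W^{1,s_{\min}}_0\hookrightarrow L^{s_0}$; this is precisely where $s_{\min}\geq(3d+2)/(d+2)$ is used, and in parallel $\nabla p_h$ is handled through the $L^\infty_t C^k(\Omega')$ bound on the compact support of $\psi$. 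A secondary subtlety is uniformity of all estimates in $\theta\geq 0$: when $\theta=0$ the term $p_4$ and all contributions of $\theta\beta$ drop out, while for $\theta>0$ the stronger assumption \eqref{eq:regularity_estimates_not_u_2} is exactly what is needed to make sense of $p_4$ and close the energy identity, both uniformly.
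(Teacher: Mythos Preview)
The paper does not actually prove this theorem: it is stated as a direct citation (Theorem~4.1 of \cite{bulicek2023nonnewtonian}) and Section~\ref{section:4} explicitly says it only \emph{recalls and combines} results from that reference. So there is no in-paper proof to compare your proposal against.

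That said, your three-stage sketch is the standard route and is essentially what the cited reference does: obtain a global pressure via Ne\v{c}as/de~Rham, peel off the explicit pieces $p_1^i,p_2^i,p_3,p_4$ solving the whole-space Poisson problems \eqref{eq:defp1A}--\eqref{eq:defp4}, observe that the remainder $p_h$ is harmonic, and then exploit that $u+\nabla p_h$ (unlike $u$ alone) has a distributional time derivative in a genuine dual space so that a mollification/Steklov argument yields the local energy identity. Your identification of the convective term as the bottleneck, and of the exponent relation $s_{\min}\ge (3d+2)/(d+2)$ (equivalently $s_0=3+2/d$) as precisely what makes $u\otimes u:\nabla(\psi(u+\nabla p_h))$ integrable, is exactly the point. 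One small correction of emphasis: in Stage~1 you cannot ``rewrite \eqref{eq:gen_identity_def} with general $\varphi$'' directly, since \eqref{eq:gen_identity_def} is only assumed for divergence-free $\varphi$; the passage to general $\varphi$ is precisely what the de~Rham step supplies, and only \emph{after} that do you subtract the explicit pressures to isolate $p_h$.
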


\section{The approximating problem}\label{section:5}

We begin our analysis with an approximation of the stress tensor $S$, which is already a staple of the theory (see eg. \cite{bulicek2021parabolic, bulicek2023nonnewtonian} and other). We set
\begin{align}\label{eq:def_of_reg_operator}
    S^\theta(t, x, \xi) := S(t, x, \xi) + \theta\nabla_{\xi}m(|\xi|), \qquad  m(|\xi|) := |\xi|^{s_{\max}}.
\end{align}
The goal this setting is an easier description of our system whenever $S^\theta$ is substituted for $S$. It turns out that $S^\theta$ exhibits growth and coercivity in classical Lebesgue spaces, hence the analysis is easier and one can use already established results. We will state it more clearly in Lemma \ref{lem:prop_of_Stheta}. As mentioned, our approximating problem is the one with $S^\theta$ substituting $S$. Thus, we will analyze

\begin{equation}\label{eq:approx_problem}
\begin{split}
\p_t f^\theta + v\cdot\nabla_xf^\theta + \DIV_v((u^\theta - v)f^\theta) &= 0\\
\p_t(u^{\theta} + \nabla p_h^\theta) + \DIV_x (u^{\theta} \otimes u^{\theta}) - \DIV_x S^{\theta}(t,x, Du^{\theta})&=\\
=- \int_{\R^d}(u^\theta - v)f^\theta\diff v  + \sum_{i = 1}^N \nabla_x(p^{i,\theta}_1 &+ p^{i,\theta}_2) + \nabla_x(p^\theta_3 + p^{\theta}_4),\\
\DIV_x u^{\theta}&=0.
\end{split}
\end{equation}

Let us state the existence result for \eqref{eq:approx_problem}.

\begin{thm}\label{thm:existence_approximation}
Let $S$ satisfy Assumption \ref{ass:stress_tensor} and $S^\theta$ be defined in \eqref{eq:def_of_reg_operator}. Then, for any initial condition $f_0\in L^\infty(\Omega\times\R^d)$, $(1 + |v|^2)f_0\in L^1(\Omega\times\R^d)$, $u_0\in L^2_{0,\DIV}(\Omega)$, there exists a function $u^\theta\in L^\infty_t L^2_x \cap L^{s_{\max}}_t W^{1,{s_{\max}}}_{0, x}$, $Du^\theta\in L^{s_{\max}}_{t,x}$, such that $S^{\theta}(t,x,Du^{\theta}) \in L^{s'_{\max}}_{t,x}$ and $f^\theta\in L^\infty_t (L^1\cap L^\infty_{x,v})$, $f^\theta\geq 0$, for which there holds
\begin{align}\label{app:existence_kinetic}
    \int_{0}^T\int_{\Omega\times\R^d}f^\theta(\p_t\phi + v\cdot\nabla_x\phi + (u^\theta - v)\cdot\nabla_v\phi)\diff v\diff x\diff t = \int_{\Omega\times\R^d}f_0\,\phi(0, x, v)\diff v\diff x
\end{align}
for any $\phi\in C^\infty_c([0, T)\times \overline{\Omega}\times \R^d)$, as well as
\begin{equation}\label{app:existence}
\begin{split}
    &\int_{\Omega_T}-u^\theta \cdot \p_t \phi - u^\theta\otimes u^\theta : \nabla\phi + S^\theta(t, x, Du^\theta):D\phi\diff x\diff t\\
    &\qquad = -\int_{0}^T\int_{\Omega}\int_{\R^d}(u^\theta - v)f^\theta \cdot \phi\diff v\diff x\diff t + \int_{\Omega}u_0(x) \cdot\phi(0,x)\diff x
\end{split}
\end{equation}
for any vector-valued $\phi\in C^\infty_c([0, T) \times \Omega)$ fulfilling $\DIV_x \phi = 0$. Moreover, the following global energy (in)equalities are satisfied for a.e. $t\in (0, T)$ (for the definition of $c^\theta, h^\theta$ see \ref{coercitivity_stress_tensor_theta_r} in Lemma \ref{lem:prop_of_Stheta})
\begin{equation}\label{app:energy}
\begin{split}
    &\frac{1}{2}\int_{\Omega}|u^\theta(t, x)|^2\diff x - \frac{1}{2}\int_{\Omega}|u_0(x)|^2\diff x + \int_0^t\int_{\Omega}S^\theta(\tau, x, Du^\theta):Du^\theta\diff x\diff \tau\\
    &\qquad = -\int_{0}^t\int_{\Omega}\int_{\R^d}(u^\theta - v)f^\theta \cdot u^\theta\diff v\diff x\diff \tau,
\end{split}
\end{equation}
\begin{equation}\label{app:energy_kinetic}
\begin{split}
    &\frac{1}{2}\int_{\Omega}|u^\theta(t, x)|^2\diff x + \frac{1}{2}\int_{\Omega\times\R^d}|v|^2f^\theta\diff v\diff x\\
    &\qquad+ \frac{1}{c^\theta}\int_0^t\int_{\Omega}|S^\theta(\tau, x, Du^\theta)|^{s'_{\max}} + |Du^\theta|^{s_{\max}}\diff x\diff \tau + \int_0^t\int_{\Omega\times\R^d}|u^\theta - v|^2f^\theta\diff v\diff x\diff\tau\\
    &\leq \frac{1}{2}\int_{\Omega}|u_0(x)|^2\diff x + \frac{1}{2}\int_{\Omega\times\R^d}|v|^2f_0\diff v\diff x + \frac{1}{c^\theta}\int_0^t\int_{\Omega}h^\theta(t, x)\diff x\diff\tau.
\end{split}
\end{equation}
\end{thm}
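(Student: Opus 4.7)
The plan is to construct solutions via a Schauder fixed-point argument combined with a Galerkin scheme for the fluid subsystem. The key simplification at the $\theta>0$ level is that, by Lemma~\ref{lem:prop_of_Stheta}, $S^\theta$ satisfies classical $s_{\max}$-type coercivity and growth, so the fluid problem reduces to a parabolic monotone-operator system in standard Lebesgue spaces with a uniform exponent. I will fix $\bar u$ in the target class $L^\infty_t L^2_x \cap L^{s_{\max}}_t W^{1,s_{\max}}_{0,x}$ and, after a mild $x$-mollification of $\bar u$ preserving the no-slip condition, first solve the linear Vlasov equation $\partial_t f + v\cdot\nabla_x f + \DIV_v((\bar u-v)f)=0$ with specular reflection by the method of characteristics, following \cite{hamdache1998global}. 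This should yield a unique $f\geq 0$ with mass conservation, the pointwise bound $\|f(t)\|_\infty \leq e^{dt}\|f_0\|_\infty$ (since $\DIV_v(\bar u-v)=-d$), and, by testing against $|v|^2/2$, a second-moment bound controlled by $\|\bar u\|_{L^\infty_t L^2_x}$ through Gronwall's inequality.

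\textbf{Fluid step and fixed point.} Given this $f$, the drag source $F_f(t,x):=\int_{\R^d}(\bar u-v)f\diff v$ will be bounded in $L^\infty_{t,x}$ from the $L^1\cap L^\infty$ and second-moment bounds on $f$. I will then solve
\[
\partial_t u + \DIV_x(\bar u\otimes u) + \nabla p = \DIV_x S^\theta(t,x,Du) - F_f, \quad \DIV_x u = 0, \quad u|_{t=0}=u_0,
\]
by Galerkin approximation on a Stokes-eigenbasis of $L^2_{0,\DIV}(\Omega)$. The linearized convective term is antisymmetric against $u$ (contributing nothing to the energy), and monotonicity of $S^\theta$ together with Minty's trick and Aubin--Lions compactness should deliver a weak solution $u\in L^\infty_t L^2_x\cap L^{s_{\max}}_t W^{1,s_{\max}}_{0,x}$. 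Setting $\mathcal{T}:\bar u\mapsto u$ on a bounded convex closed set $K\subset L^{s_{\max}}_{t,x}$ (invariance from the uniform energy bound), I will deduce relative compactness of $\mathcal{T}(K)$ from standard Aubin--Lions arguments and continuity from stability of the Vlasov solution under perturbations of $\bar u$. Schauder's theorem then produces a fixed point $(f^\theta,u^\theta)$ solving \eqref{eq:approx_problem}, after which the $x$-mollification is removed by a limit passage justified by the uniform estimates.

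\textbf{Energy equalities.} Since $s_{\max}\geq s_0>2$, the solution $u^\theta\in L^{s_{\max}}_t W^{1,s_{\max}}_{0,x}\cap L^\infty_t L^2_x$ is itself admissible as a test function in \eqref{app:existence} (after a standard time-regularization argument). The convective term vanishes by $\DIV_x u^\theta=0$, producing \eqref{app:energy}. To obtain \eqref{app:energy_kinetic}, I will multiply the kinetic equation by $|v|^2/2$ and integrate in $(x,v)$: the $x$-boundary contributions cancel thanks to specular reflection, the $v$-boundary contributions vanish by decay of $f^\theta$, and the identity $\frac{d}{dt}\int\int |v|^2/2\,f^\theta\diff v\diff x = \int\int(u^\theta-v)\cdot v\,f^\theta\diff v\diff x$ emerges. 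Adding this to \eqref{app:energy}, the two drag terms assemble into $\int\int |u^\theta-v|^2 f^\theta\diff v\diff x$, and the coercivity \ref{coercitivity_stress_tensor_theta_r} bounds $\int S^\theta(\cdot,Du^\theta):Du^\theta$ below by $\frac{1}{c^\theta}\int(|Du^\theta|^{s_{\max}}+|S^\theta(\cdot,Du^\theta)|^{s'_{\max}})-\frac{1}{c^\theta}\int h^\theta$, yielding \eqref{app:energy_kinetic}.

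\textbf{Main obstacle.} The delicate point will be the compactness and continuity of $\mathcal{T}$, because the coupling $F_f$ depends on $\bar u$ both directly and through $f$. Strong convergence of $\bar u$ along a subsequence is supplied by Aubin--Lions, but one also needs compactness of the moments $\int f\diff v$ and $\int v f\diff v$, and $f^\theta$ itself is generically only weakly convergent. This is exactly where the velocity-averaging lemma mentioned in the introduction must be invoked, providing the required strong compactness of these moments in $L^p_{t,x}$. A secondary technical point is ensuring that the $x$-mollification of $\bar u$ preserves the no-slip condition on $\partial\Omega$, so that the reflecting characteristics remain well-defined throughout the scheme and the specular boundary condition on $f^\theta$ passes cleanly to the limit.
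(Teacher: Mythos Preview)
Your proposal is essentially correct and aligned with the paper's approach: the paper's proof is in fact much terser than yours, simply noting that once $S^\theta$ has classical $L^{s_{\max}}$ growth and coercivity (Lemma~\ref{lem:prop_of_Stheta}), the existence follows from the scheme in \cite{choi2025coupledvlasovnonnewtonianfluid} (fixed point coupling Vlasov to non-Newtonian Navier--Stokes), together with \cite{yu2013global,hamdache1998global,beals1987abstract} for the kinetic equation with specular reflection and \cite{bulicek2023nonnewtonian,gwiazda2008onnonnewtonian} for the energy equality \eqref{app:energy} once the forcing $F^\theta\in L^2_{t,x}$; your Schauder/Galerkin outline and velocity-averaging argument are precisely what lies behind those citations.

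One small slip: you write ``since $s_{\max}\geq s_0>2$,'' but there is no reason that $s_{\max}\geq s_0=3+\tfrac{2}{d}$ should hold. What you actually need (and what the paper uses) is only $s_{\max}\geq s_{\min}=\tfrac{3d+2}{d+2}$, which by interpolation (Lemma~\ref{thm:interpolation}) gives $u^\theta\in L^{s_{\max}(1+2/d)}_{t,x}$ and hence $(u^\theta\cdot\nabla)u^\theta\cdot u^\theta\in L^1_{t,x}$; this is exactly the threshold that makes $u^\theta$ an admissible test function in \eqref{app:existence}, so your conclusion stands with the corrected inequality.
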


\begin{rem}\label{rem:form_of_h_theta}
    From the forms of $c^\theta, h^\theta$ in Lemma \ref{lem:prop_of_Stheta} it will be clear that $\frac{1}{c^\theta}h^\theta$ is independent of $\theta > 0$, hence the inequality \eqref{app:energy_kinetic} provides estimates uniform in $\theta >0$.
\end{rem}

To perform the proof of the Theorem \ref{thm:existence_approximation} we first need to look at the announced behavior of $S^\theta$. It is quantified in the lemma below.

\begin{lem}\label{lem:prop_of_Stheta}
Function $S^\theta$ satisfies the following:
\begin{enumerate}[label=(R\arabic*)]
\item \label{monotonicity_theta} $S^{\theta}(t,x, \xi)$ is a Carath\'{e}odory function and $S(t, x, 0) = 0$,
\item \label{coercitivity_stress_tensor_theta} (coercitivity and growth in $L^{s(t,x)}$) there exists a positive constant $c$ and a non-negative, integrable function $h(t, x)$, such that for any $\xi\in\symm$ and almost every $(t, x) \in (0, T) \times \Omega$
$$
c \, S^{\theta}(t, x, \xi) : \xi \geq | \xi|^{s(t, x)} + | S(t, x, \xi)|^{s'(t, x)} + \theta\nabla_{\xi}m(|\xi|) \cdot \xi - h(t, x);
$$
the constant $c$ and function $h$ can be chosen independently of $\theta$,
\item \label{coercitivity_stress_tensor_theta_r} (coercivity and growth in $L^{s_{\max}}$) there exists a positive constant $c^{\theta}$ and a non-negative, integrable function $h^{\theta}(t, x)$, such that for any $\xi\in\symm$ and almost every $(t, x) \in (0, T) \times \Omega$
$$
c^{\theta} \, S^{\theta}(t, x, \xi) : \xi \geq | \xi|^{s_{\max}} + | S^{\theta}(t, x, \xi)|^{s'_{\max}} - h^\theta(t, x)
$$
\item \label{monotonicity_stress_tensor_theta}(monotonicity) S is strictily monotone, i. e.:
$$
(S^{\theta}(t, x, \xi_1) - S^{\theta}(t, x, \xi_2)) : (\xi_1 - \xi_2) > 0
$$
for all $\xi_1 \neq \xi_2\in \symm$ and almost every $(t, x) \in (0, T) \times \Omega$.
\end{enumerate}
\end{lem}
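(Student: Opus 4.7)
The four properties are essentially bookkeeping consequences of Assumption~\ref{ass:stress_tensor} together with the explicit structure of the regularizer $\theta \nabla_\xi m(|\xi|) = \theta s_{\max}|\xi|^{s_{\max}-2}\xi$. I would handle them in the order written.

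For \ref{monotonicity_theta}, note that $S$ is Carath\'eodory by \ref{T1} and that $\xi \mapsto \theta s_{\max}|\xi|^{s_{\max}-2}\xi$ is continuous (and vanishes at the origin since $s_{\max}>1$), so $S^\theta$ is Carath\'eodory with $S^\theta(t,x,0)=0$. For \ref{coercitivity_stress_tensor_theta}, I would simply add $c\theta\nabla_\xi m(|\xi|)\cdot\xi = c\theta s_{\max}|\xi|^{s_{\max}}$ to both sides of the inequality in \ref{coercitivity_stress_tensor}; since this quantity is nonnegative, replacing $c$ with $\max(c,1)$ absorbs the factor $c$ on the right and gives the claim with the same $c,h$ as in \ref{coercitivity_stress_tensor}, which is independent of $\theta$.

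The main work sits in \ref{coercitivity_stress_tensor_theta_r}, where the exponent changes from $s(t,x)$ to the fixed $s_{\max}$. The idea is to combine the bound from \ref{coercitivity_stress_tensor_theta} with the elementary estimates
\[
|\xi|^{s_{\max}} \le |\xi|^{s(t,x)}+1, \qquad |S|^{s'_{\max}} \le |S|^{s'(t,x)}+1,
\]
(valid since $s(t,x)\le s_{\max}$) and
\[
|S^\theta|^{s'_{\max}} \le 2^{s'_{\max}-1}\bigl(|S|^{s'_{\max}} + (s_{\max}\theta)^{s'_{\max}}|\xi|^{(s_{\max}-1)s'_{\max}}\bigr) = 2^{s'_{\max}-1}\bigl(|S|^{s'_{\max}} + (s_{\max}\theta)^{s'_{\max}}|\xi|^{s_{\max}}\bigr),
\]
using $(s_{\max}-1)s'_{\max}=s_{\max}$. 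Together these give
\[
|\xi|^{s_{\max}} + |S^\theta|^{s'_{\max}} \le \bigl(1+2^{s'_{\max}-1}(s_{\max}\theta)^{s'_{\max}}\bigr)|\xi|^{s_{\max}} + 2^{s'_{\max}-1}|S|^{s'(t,x)} + 2^{s'_{\max}-1}+1.
\]
On the other hand, from \ref{coercitivity_stress_tensor_theta} we control $|S|^{s'(t,x)}$ and $\theta s_{\max}|\xi|^{s_{\max}}$ by $cS^\theta:\xi + h$. Choosing $\lambda = \lambda(\theta)$ large enough (for instance $\lambda \ge \max(2^{s'_{\max}-1},\, (\theta s_{\max})^{-1}(1+2^{s'_{\max}-1}(s_{\max}\theta)^{s'_{\max}}))$) and setting $c^\theta = \lambda c$, $h^\theta = \lambda h + (2^{s'_{\max}-1}+1)$ yields the desired inequality; in particular $h^\theta/c^\theta = h/c + (2^{s'_{\max}-1}+1)/(\lambda c)$, which can be made independent of $\theta$ (consistent with Remark~\ref{rem:form_of_h_theta}) by fixing $\lambda c$ as the corresponding constant.

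For \ref{monotonicity_stress_tensor_theta}, I would split
\[
(S^\theta(t,x,\xi_1)-S^\theta(t,x,\xi_2)):(\xi_1-\xi_2) = (S(t,x,\xi_1)-S(t,x,\xi_2)):(\xi_1-\xi_2) + \theta\bigl(\nabla_\xi m(|\xi_1|)-\nabla_\xi m(|\xi_2|)\bigr):(\xi_1-\xi_2);
\]
the first term is nonnegative by \ref{monotonicity_stress_tensor}, and the second is strictly positive whenever $\xi_1\neq \xi_2$ because $m(\xi)=|\xi|^{s_{\max}}$ is strictly convex on $\symm$ for $s_{\max}>1$, so its gradient is strictly monotone. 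I expect \ref{coercitivity_stress_tensor_theta_r} to be the only step that requires care, precisely because of the need to convert the variable-exponent bounds into $s_{\max}$-bounds while keeping the structure $h^\theta/c^\theta$ stable as $\theta\to 0^+$.
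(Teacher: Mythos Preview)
Your overall strategy is correct and is essentially what the referenced proof in \cite{bulicek2023nonnewtonian} does (the present paper only cites that lemma and records the form of $h^\theta/c^\theta$). Two points need fixing.

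First, the displayed ``elementary estimate'' $|\xi|^{s_{\max}} \le |\xi|^{s(t,x)}+1$ is \emph{false}: since $s(t,x)\le s_{\max}$, for $|\xi|>1$ the inequality goes the other way. Fortunately you do not actually use it---in the combined bound you correctly keep $|\xi|^{s_{\max}}$ on the right and later control it through the term $\theta s_{\max}|\xi|^{s_{\max}}=\theta\nabla_\xi m(|\xi|)\!:\!\xi$ coming from \ref{coercitivity_stress_tensor_theta}. Just delete the false inequality (the companion estimate $|S|^{s'_{\max}}\le |S|^{s'(t,x)}+1$ is correct because $s'_{\max}\le s'(t,x)$).

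Second, the closing remark ``can be made independent of $\theta$ by fixing $\lambda c$'' is not right as stated: your own lower bound forces $\lambda\ge (\theta s_{\max})^{-1}\bigl(1+2^{s'_{\max}-1}(s_{\max}\theta)^{s'_{\max}}\bigr)\to\infty$ as $\theta\to 0^+$, so $\lambda c$ cannot be held fixed. What your construction does give is $h^\theta/c^\theta = h/c + (2^{s'_{\max}-1}+1)/(\lambda c)$, which is uniformly bounded in $\theta$ since $\lambda\ge 2^{s'_{\max}-1}$; this is all that is needed later. If you want the exact identity $h^\theta/c^\theta=(h+1)/c$ recorded in the paper, simply enlarge $h^\theta$ to $\lambda(h+1)$ (the inequality in \ref{coercitivity_stress_tensor_theta_r} only improves), with $\lambda$ also taken $\ge 2^{s'_{\max}-1}+1$.
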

\begin{proof}
The whole proof can be found in \cite[Lemma 5.3]{bulicek2023nonnewtonian}. We only note here that
$$
\frac{1}{c^\theta}h^\theta(t, x) = \frac{1}{c}(h(t, x) + 1),
$$
where $c >0$ and $h$ are taken from \ref{coercitivity_stress_tensor_theta}.
\end{proof}

\begin{proof}[Proof of Theorem \ref{thm:existence_approximation}] Since we have already established that $S^\theta$ behaves as in a standard theory with classical Lebesgue spaces, we can follow already known results. The proof of this theorem follows the lines of the proof in \cite{choi2025coupledvlasovnonnewtonianfluid}, which is done in a periodic domain. In fact, our case is much simpler, since in a bounded domain there is no need to split the fluid velocity into its mean-less part, and its mean (this is the main problem in a periodic domain, because the system \eqref{sys:main_system} does not have, a typical in a periodic theory, fluid velocity with zero mean over the periodic torus). Moreover, by Assumption \ref{ass:usual_bounds_exp} we know $s_{\mathrm{max}}\geq \frac{11}{5}$, hence the solution is a valid test function (as the term $(u\cdot\nabla_x)u\cdot u\in L^1_{t,x}$), and we do not need to utilize the $L^\infty$ truncation method developed in \cite{MR1713880} to identify the limits of the non-linearities. Instead, one could easily use the standard theory from \cite{gwiazda2008onnonnewtonian}. There are two questions that we need to answer. First is about the existence of regular solutions of the kinetic equation with reflection boundary conditions, which is the first step in any approximation procedure. This is an already classical theory, we refer to \cite{yu2013global, hamdache1998global, beals1987abstract}. The second is the validity \eqref{app:energy}, which is not provided in \cite{choi2025coupledvlasovnonnewtonianfluid}. To do this, notice that the function
$$
F^\theta(t, x) := -\int_{\R^d}(u^\theta - v)f^\theta \diff v,
$$
belongs to $L^2_{t,x}$ by Young's inequality, \eqref{app:energy_kinetic} and the fact that $f^\theta\in L^1_{t, x, v}$. Hence, one may use the standard theory of non-Newtonian fluids found in \cite[Theorem 5.1]{bulicek2023nonnewtonian} or \cite{gwiazda2008onnonnewtonian}, where the energy equality holds whenever the forcing term is at least in $L^1_t L^2_x$.
\end{proof}

 Obviously, we will want now to let $\theta \to 0$ in \eqref{eq:approx_problem}, \eqref{app:existence_kinetic} and \eqref{app:existence}, and complete our proof. First, we show that particular estimates independent of $\theta$ are true.

\begin{thm}\label{res:estimate_on_approx_seq}
Let $\{(f^\theta, u^{\theta})\}_{\theta \in (0,1)}$ be the sequence of solutions to \eqref{eq:approx_problem} constructed in Theorem \ref{thm:existence_approximation}. Let $\{p^{i,\theta}_1\}_{\theta \in (0,1)}$, $\{p^{i,\theta}_2\}_{\theta \in (0,1)}$, $\{p^\theta_3\}_{\theta \in (0,1)}$, $\{p^{\theta}_4\}_{\theta \in (0,1)}$, $\{p^{\theta}_h\}_{\theta \in (0,1)}$ be the sequences of pressures obtained by Theorem \ref{thm:local_energy_equality} with $
\alpha = S(t,x,Du^{\theta})$, $\beta = \nabla_{\xi}m(|D u^{\theta}|)
$. Then,
\begin{enumerate}[label = (B\arabic*)]
    \item \label{lemmathet_kinetic_est_1} $\{f^\theta\}_{\theta\in (0, 1)}$ is bounded in $L^\infty_tL^c_{x,v}$, $1\leq c \leq +\infty$,
    \item \label{lemmathet_kinetic_est_2} $\left\{\int_{\R^d}f^\theta\diff v\right\}_{\theta \in (0,1)}$ is bounded in $L^\infty_tL^c_x$, $1\leq c < \frac{d + 2}{d}$,
    \item \label{lemmathet_kinetic_est_3} $\left\{\int_{\R^d}vf^\theta\diff v\right\}_{\theta\in (0, 1)}$ is bounded in $L^\infty_t L^c_x$, $1 \leq c < \frac{d + 2}{d + 1}$.
\end{enumerate}
as well as
\begin{enumerate}[label=(B\arabic*), start = 4]
    \item\label{lemmathet_est1} $\{u^{\theta}\}_{\theta \in (0,1)}$ is bounded in $L^{\infty}_t L^2_x$,
    \item\label{lemmathet_est2} $\{Du^{\theta}\}_{\theta \in (0,1)}$ is bounded in $L^{s(t,x)}_{t,x}$,
   \item\label{lemmathet_est3i1/2} $\{ u^{\theta}\}_{\theta \in (0,1)}$ is bounded in $L^{s_{\min}}_t W^{1,s_{\min}}_{x,0}$ and $L^{q_i(t)}(0,T; W^{1,q_i(t)}(\mathcal{B}^i_{2r}))$,
    \item\label{lemmathet_est2and1/2} $\{u^{\theta}\}_{\theta \in (0,1)}$ is bounded in $L^{s_0}_{t,x}$ and $L^{R_i(t)}((0,T)\times \mathcal{B}^i_{2r})$,
    \item\label{lemmathet_est3}  $\{S(t,x,Du^{\theta})\}_{\theta \in (0,1)}$ is bounded in $L^{s'(t,x)}_{t,x}$,
    \item\label{lemmathet_est4} $\{\theta \, |D u^{\theta}|^{s_{\max}}\}_{\theta \in (0,1)}$ is bounded in $L^1_{t,x}$,
    \item\label{lemmathet_est5} $\{\theta^{1-s'_{\max}} \left|\theta\,\nabla_{\xi}m(|D u^{\theta}|)\right|^{s'_{\max}}\}_{\theta \in (0,1)}$ is bounded in $L^1_{t,x}$,
\item\label{lemmathet_est4and1/2-A} $\{p^{i,\theta}_1\}_{\theta\in(0, 1)}$ is bounded in $L^{r_i'(t)}_{t,x}$ and $L^{s'_{\max}}(0,T; L^{\infty}_{\text{loc}}(\R^d\setminus \mathcal{B}^i_{r}))$,
\item\label{lemmathet_est4and1/2-C} $\{p^{i,\theta}_2\}_{\theta\in(0, 1)}$ is bounded in $L^{R_i(t)/2}_{t,x}$ and $L^{s_0/2}(0,T; L^{\infty}_{\text{loc}}(\R^d\setminus \mathcal{B}^i_{r}))$,
\item\label{lemmatheta_est_p3} $\{p_3^\theta\}_{\theta\in (0, 1)}$ is bounded in $L^2_{t,x}$
\item\label{lemmathet_est4and1/2-F} $\{ \theta^{-1/s_{\max}} \, p^\theta_4\}_{\theta\in(0, 1)}$ is bounded in $L^{s'_{\max}}_{t,x}$
    \item\label{lemmathet_est9and5/8} $\{p^\theta_h\}_{\theta\in(0, 1)}$ is bounded in $L^\infty_t L^{s'_{\max}}_x$
    \item\label{lemmathet_est6} $\{p^{\theta}_h\}_{\theta\in (0, 1)}$ is bounded in $L^{\infty}_tW^{2,\infty}_{x, loc}$,
    \item\label{lemmathet_est8} $\{\partial_t u^\theta\}_{\theta\in (0, 1)}$ is bounded in $L^1_t V_{2,d}^*$,
    \item\label{lemmathet_est7} $\{ \partial_t (u^{\theta} + \nabla p_h^\theta) \}_{\theta \in (0,1)}$ is bounded in $L^{1}_t\left( W^{1,s_{\max}}_{x,0}\right)^*$.
\end{enumerate}
where $V_{2,d}$ is closure of $\{\phi\in C^\infty_c(\Omega) | \DIV\phi = 0\}$ in $W^{2,d}(\Omega)$.
\end{thm}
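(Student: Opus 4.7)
All bounds are driven by the uniform (in $\theta \in (0,1)$) energy inequality \eqref{app:energy_kinetic}, which is uniform thanks to Remark \ref{rem:form_of_h_theta}, combined with the abstract machinery already collected in Section~\ref{section:4} and with Lemma~\ref{lem:prop_of_Stheta}. I would handle the kinetic bounds \ref{lemmathet_kinetic_est_1}--\ref{lemmathet_kinetic_est_3} first. For \ref{lemmathet_kinetic_est_1}, one combines the maximum principle for the linear Vlasov equation (along characteristics $f^\theta$ is multiplied by $e^{dt}$ since $\DIV_v(u^\theta - v) = -d$, giving $\|f^\theta(t)\|_{L^\infty_{x,v}} \leq e^{dt}\|f_0\|_{L^\infty_{x,v}}$) with mass conservation $\|f^\theta(t)\|_{L^1_{x,v}} = \|f_0\|_{L^1_{x,v}}$; the intermediate $L^c$ bounds come from interpolation. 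For \ref{lemmathet_kinetic_est_2}--\ref{lemmathet_kinetic_est_3} I use the classical moment interpolation lemma: for $f \geq 0$ with $f \in L^\infty_{x,v}$ and $\int_{\R^d}|v|^2 f \diff v \in L^\infty_x$, splitting $\{|v|\leq R\} \cup \{|v|>R\}$ and optimising in $R$ produces the stated integrabilities, the required moment bound being immediate from \eqref{app:energy_kinetic}.

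Next I would turn to the core fluid bounds. Items \ref{lemmathet_est1}, \ref{lemmathet_est4}, \ref{lemmathet_est5} and the bound on $\int|u^\theta - v|^2 f^\theta$ are read off \eqref{app:energy_kinetic} directly. Applying property \ref{coercitivity_stress_tensor_theta} of Lemma \ref{lem:prop_of_Stheta} to the dissipation term then extracts the $|Du^\theta|^{s(t,x)}$ and $|S(t,x,Du^\theta)|^{s'(t,x)}$ contributions, giving \ref{lemmathet_est2} and \ref{lemmathet_est3}; Lemma \ref{lem:simple_integrability_lemma} then yields \ref{lemmathet_est3i1/2}. The only genuinely technical ingredient is \ref{lemmathet_est2and1/2}: on each $\mathcal{B}^i_{2r}$ the exponent $q_i(t)$ is $x$-independent, and combining $u^\theta \in L^\infty_t L^2_x$ with $u^\theta \in L^{q_i(t)}(0,T;W^{1,q_i(t)}(\mathcal{B}^i_{2r}))$ from \ref{lemmathet_est3i1/2}, Gagliardo--Nirenberg in space and H\"older in time give
$$
\int_0^T\int_{\mathcal{B}^i_{2r}} |u^\theta|^{q_i(t)(1+2/d)} \diff x \diff t \leq C\,\|u^\theta\|_{L^\infty_t L^2_x}^{2q_i(t)/d} \int_0^T \|\nabla u^\theta\|_{L^{q_i(t)}(\mathcal{B}^i_{2r})}^{q_i(t)} \diff t,
$$
which is precisely the local $L^{R_i(t)}$ bound since $R_i(t) = q_i(t)(1+2/d)$; the global $L^{s_0}_{t,x}$ estimate is the same argument applied to the zero-trace bound with constant exponent $q = s_{\min}$, using $s_{\min}(1+2/d) = s_0$.

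The pressure bounds \ref{lemmathet_est4and1/2-A}--\ref{lemmathet_est4and1/2-F} are immediate from Lemma \ref{lem:integrability_of_pi} applied with $\alpha = S(\cdot,\cdot,Du^\theta)$, $\beta = \nabla_\xi m(|Du^\theta|)$ and $F = F^\theta := -\int_{\R^d}(u^\theta - v)f^\theta\diff v$, where the latter is uniformly bounded in $L^2_{t,x}$ by Young's inequality and \eqref{app:energy_kinetic}, exactly as in the proof of Theorem \ref{thm:existence_approximation}. The $p_h^\theta$-bounds \ref{lemmathet_est9and5/8}--\ref{lemmathet_est6} then follow from \eqref{eq:thm_estimate_on_p_Hol} in Theorem \ref{thm:local_energy_equality}, whose right-hand side involves only quantities that have just been controlled uniformly in $\theta$. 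Finally, for \ref{lemmathet_est8} I would test \eqref{app:existence} against $\phi \in V_{2,d}$ and use the Sobolev embedding $W^{2,d}(\Omega) \hookrightarrow L^\infty(\Omega) \cap W^{1,p}(\Omega)$ (for every finite $p$, in dimensions $d \in \{2,3\}$) together with the bounds already obtained on $u^\theta \otimes u^\theta$, $S(t,x,Du^\theta)$ and $F^\theta$; estimate \ref{lemmathet_est7} is analogous, but test functions lie in $W^{1,s_{\max}}_{x,0}$, so one tests the pressure-augmented identity \eqref{eq:distributional_id_with_ph} and uses that all pressure contributions have been uniformly controlled in $L^{s'_{\max}}_{t,x}$ (modulo the vanishing factor $\theta^{1/s_{\max}}$ for $p_4^\theta$ from \ref{lemmathet_est4and1/2-F}). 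The main obstacle is the variable-exponent parabolic Gagliardo--Nirenberg interpolation \ref{lemmathet_est2and1/2}; the geometric decomposition of Lemma \ref{lem:decomposition_Omega} is precisely what makes it tractable, by reducing everything to classical Gagliardo--Nirenberg on balls where $q_i(t)$ is constant in $x$.
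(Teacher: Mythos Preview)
Your proposal is correct and follows essentially the same route as the paper. One small organisational difference: you claim \ref{lemmathet_est4} and \ref{lemmathet_est5} are read off \eqref{app:energy_kinetic} directly, but that inequality only controls $\frac{1}{c^\theta}\int|Du^\theta|^{s_{\max}}$, so deducing \ref{lemmathet_est4} requires knowing $c^\theta \lesssim 1/\theta$, which is never stated; the paper instead combines the energy \emph{equality} \eqref{app:energy} with \ref{coercitivity_stress_tensor_theta} in Lemma~\ref{lem:prop_of_Stheta}, obtaining a uniform bound on $\int \theta\nabla_\xi m(|Du^\theta|):Du^\theta$ and then extracting \ref{lemmathet_est4}--\ref{lemmathet_est5} via the convex-conjugate identity for $m$. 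Your Gagliardo--Nirenberg computation for \ref{lemmathet_est2and1/2} is exactly what the paper packages as Lemma~\ref{thm:interpolation} (with the understanding that the time-dependent exponent on $\|u^\theta\|_{L^\infty_tL^2_x}$ is applied pointwise in $t$ before integrating, and that on a bounded ball a lower-order term is needed). Otherwise the argument matches the paper's.
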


\begin{proof}[Proof of Theorem \ref{res:estimate_on_approx_seq}]
We begin with the bounds on $f^\theta$. The first one follows from the approximation procedure in Theorem \ref{thm:existence_approximation}. Looking at \cite[Lemma 2.1]{choi2025coupledvlasovnonnewtonianfluid}, we see
$$
\sup_{t\in (0, T)}\|f^\theta\|_{L^c_{x,v}} \leq e^{d\frac{c-1}{c}T}\|f_0\|_{L^c_{x,v}},\quad 1\leq c\leq +\infty,
$$
which is enough for \ref{lemmathet_kinetic_est_1}, since by interpolation $f_0\in L^c_{x,v}$ for any $1\leq c\leq +\infty$. To get \ref{lemmathet_kinetic_est_2} and \ref{lemmathet_kinetic_est_3} one can follow \cite[Lemma 2.4]{karper2013existence} or look at \cite[Proposition 2.1]{choi2025coupledvlasovnonnewtonianfluid} to see that
\begin{align*}
    &\left|\int_{\R^d}f^\theta\diff v\right|^c \leq C_1\|f^\theta\|_\infty^{c-1}\int_{\R^d}(1+|v|^2)f^\theta\diff v,\quad 1\leq c <\frac{d+2}{d}\\
    &\left|\int_{\R^d}vf^\theta\diff v\right|^c \leq C_2\|f^\theta\|_\infty^{c-1}\int_{\R^d}(1+|v|^2)f^\theta\diff v,\quad 1\leq c <\frac{d+2}{d+1},
\end{align*}
where the right-hand side is bounded by \eqref{app:energy_kinetic}, and \ref{lemmathet_kinetic_est_1}.

 We move over to the bounds on velocity. In fact, the argumentation for the velocity field is the same as in \cite[Theorem 5.2]{bulicek2023nonnewtonian}, but let us repeat the important points for the convenience of the reader. For simplicity, let us denote for the rest of the argument
\begin{align*}
    F^\theta(t, x) := -\int_{\R^d}(u^\theta - v)f^\theta \diff v,
\end{align*}
which by \eqref{app:energy_kinetic} and \ref{lemmathet_kinetic_est_1} is uniformly bounded in $L^2_{t,x}$. We begin by noticing that \ref{lemmathet_est1} is a direct consequence of \eqref{app:energy_kinetic}, and the fact that $\frac{1}{c^\theta}h^\theta$ is independent of $\theta >0$ (see Remark \ref{rem:form_of_h_theta}). Then, we may combine the energy equality~\eqref{app:energy} and coercivity estimate~\ref{coercitivity_stress_tensor_theta} in~Lemma~\ref{lem:prop_of_Stheta} to deduce
\begin{multline*}
   \frac{1}{2}\int_{\Omega}|u^\theta(t, x)|^2\diff x + \frac{1}{c} \int_{\Omega_t} \left(
   | Du|^{s(\tau, x)}
   +  | S(\tau, x, Du)|^{s'(\tau, x)}
   + c\,\theta\nabla_{\xi}m(|D u^{\theta}|) \cdot D u^{\theta} \right) \diff x \diff \tau = \\ =  \frac{1}{2}\int_{\Omega}|u_0(x)|^2\diff x  + \int_0^t \int_{\Omega}F^\theta\cdot u^\theta\diff x \diff \tau +  \int_0^t \int_{\Omega} h(\tau,x) \diff x \diff \tau.
 \end{multline*}
By the H\"{o}lder inequality, we can estimate $\int_{\Omega} F^\theta\cdot u^{\theta}\le \|F^\theta\|_2\|u^{\theta}\|_2$ on the right hand side.  Using \ref{lemmathet_est1} and the bound on $F^\theta$, we conclude \ref{lemmathet_est2} and \ref{lemmathet_est3}. Moreover, it follows, that $\int_{\Omega_T} \theta\nabla_{\xi}m(|D u^{\theta}|) \cdot D u^{\theta}  \diff x \diff \tau$ is bounded uniformly in $\theta \in (0,1)$. But then, using the definition of a convex conjugate, we may deduce that there exists a constant $C_*>0$, such that
 $$
 \int_{\Omega_T} \left(\theta \, |Du|^{s_{\max}} + \theta \, C_*\,|\nabla_\xi m(|Du|)|^{s'_{\max}}\right) \diff x \diff \tau = \int_{\Omega_T} \theta\nabla_{\xi}m(|D u^{\theta}|) \cdot D u^{\theta}  \diff x \diff \tau.
 $$
 This implies \ref{lemmathet_est4} and \ref{lemmathet_est5}.\\

 Moving further, to show \ref{lemmathet_est3i1/2}, we observe that \ref{lemmathet_est2} implies that $\{Du^{\theta}\}_{\theta \in (0,1)}$ is bounded in $L^{s_{\min}}_{t,x}$ and $L^{q_i(t)}_{t,x}((0,T)\times \mathcal{B}^i_{2r})$ (because $s_{\min} \leq s(t,x)$ on $\Omega_T$ and $q_i(t) \leq s(t,x)$ on $(0,T)\times \mathcal{B}^i_{2r}$). Then, Korn's inequality implies that $\{\nabla u^{\theta}\}_{\theta \in (0,1)}$ is bounded in $L^{s_{\min}}_{t,x}$ and $L^{q_i(t)}_{t,x}((0,T)\times \mathcal{B}^i_{2r})$. To end the argument, note that the velocity mean $\int_{\Omega}u^{\theta}(t,x)\diff x$ is controlled in $L^{\infty}_t$ by \ref{lemmathet_est1}, so that the claim follows by the Poincar\'{e} inequality. With this, one can see that the estimate \ref{lemmathet_est2and1/2} follows from \ref{lemmathet_est1} and \ref{lemmathet_est3i1/2} together with Lemma~\ref{thm:interpolation}.\\

 The estimates on the pressures are a direct application of the Theorem \ref{thm:local_energy_equality} and the Lemma \ref{lem:integrability_of_pi} with
$$
\alpha = S^{\theta}(t,x,Du^{\theta}), \qquad \beta = \nabla_{\xi}m(|D u^{\theta}|)
$$
so that \ref{lemmathet_est4and1/2-A}--\ref{lemmathet_est6} follows from the bound on $F^\theta$ in $L^2_{t,x},$ \ref{lemmathet_est2and1/2}, \ref{lemmathet_est3} and \ref{lemmathet_est5}.\\

 To show the bounds \ref{lemmathet_est8}, \ref{lemmathet_est7} we use the equations satisfied by the velocity with or without the pressures. For \ref{lemmathet_est8}, we have
$$
\p_t u^\theta = \underbrace{- \DIV (u^{\theta} \otimes u^{\theta}) + \DIV S^{\theta}(t,x, Du^{\theta})  + F^\theta}_{:=\, A^{\theta}}
$$
We want to prove that $A^\theta$ defines a functional on $L^\infty_t V_{2,d}$. This is clear because functions in $L^\infty_t V_{2,d}$ have spatial derivatives in $L^{\infty}_t L^{z}_x$ for all $z<\infty$ and all the functions $u^{\theta} \otimes u^{\theta}$, $S^{\theta}(t,x, Du^{\theta})$ and $F^\theta$ are at least uniformly integrable in time and belong to some $L^a$ with $a>1$ with the norm independent of $\theta \in (0,1)$. For \ref{lemmathet_est7} note that
$$
\p_t(u^{\theta} + \nabla p_h^\theta) =\underbrace{- \DIV (u^{\theta} \otimes u^{\theta}) + \DIV S^{\theta}(t,x, Du^{\theta})  + F^\theta + \,\sum_{i=1}^N\nabla (p^{i,\theta}_1 + p^{i,\theta}_2) + \nabla (p^\theta_3 + p^{\theta}_4)}_{:=\, B^\theta}.
$$
We observe that all of the functions $u^{\theta} \otimes u^{\theta}$, $S^{\theta}(t,x, Du^{\theta})$, $F^\theta$, $p_i^{\theta}$ are uniformly bounded at least in $L^1_tL^{s'_{\max}}_{x}$ (this uses inequalities $s'_{\max} \leq s'_{\min}$ and equality $s_0/2 = s'_{\min}$) so that $B^\theta$, and in consequence $\partial_t(u^{\theta}+ \nabla p_h^{\theta})$ is bounded in $L^{1}_t (W^{1,s'_{\max}}_{x_0})^*$, hence \ref{lemmathet_est7} holds.
\end{proof}

\section{Proof of existence result via the monotonicity method}\label{section:6}
\begin{proof}[Proof of Theorem \ref{thm:the_main_result}] The technique of the proof will follow the one presented in \cite{bulicek2023nonnewtonian}. We begin our analysis with the following convergence lemma, which is a consequence of the bounds established in the Theorem \ref{res:estimate_on_approx_seq}.
\begin{lem}\label{lem:convergence_in_theta_approx}
Let $(f^\theta, u^{\theta})$ be a solution to \eqref{eq:approx_problem} constructed in Theorem \ref{thm:existence_approximation}. Let $p^{i,\theta}_1$, $p^{i,\theta}_2$, $p^\theta_3$, $p^{\theta}_4$, $p^\theta_h$ be the sequences of pressures obtained in Theorem \ref{thm:local_energy_equality}. Then, we can extract appropriate subsequences such that
\begin{enumerate}[label = (C\arabic*)]
    \item \label{conv_f_theta} $f^\theta \wstar f$ in $L^\infty_tL^c_{x,v}$, $1 < c\leq +\infty$,
    \item \label{conv_rho_f_theta} $\int_{\R^d}f^\theta\diff v \wstar \int_{\R^d}f\diff v$ in $L^\infty_t L^c_x$, $1 < c < \frac{d + 2}{d}$,
    \item \label{conv_momentum_f_theta} $\int_{\R^d}vf^\theta\diff v \wstar \int_{\R^d}vf\diff v$ in $L^\infty_t L^c_x$, $1 < c < \frac{d+2}{d+1}$,
    \item \label{conv_rho_f_theta_strong} $\int_{\R^d}f^\theta\diff v \rightarrow \int_{\R^d}f\diff v$ a.e. in $\Omega_T$, and in $L^{c_1}_t L^{c_2}_x$, $1\leq c_1 < +\infty$, $1\leq c_2 < \frac{d + 2}{d}$,
    \item \label{conv_momentum_f_theta_strong} $\int_{\R^d}vf^\theta\diff v \rightarrow \int_{\R^d}vf\diff v$ a.e. in $\Omega_T$, and in $L^{c_1}_t L^{c_2}_x$, $1\leq c_1 < +\infty$, $1\leq c_2 < \frac{d+2}{d+1}$,
\end{enumerate}
as well as
\begin{enumerate}[label=(C\arabic*), start = 6]                        
\item\label{c0} $u^{\theta}\overset{*}{\rightharpoonup} u$ in $L^{\infty}_{t}L^2_x$,
\item\label{conv_item_strongu} $u^{\theta} \to u$ a.e. in $\Omega_T$ and in $L^{c}_{t,x}$ for all $c < s_0$,
    \item\label{conv_item_strongu_c1c2} $u^{\theta} \to u$ in $L^{c_1}_{t} L^{c_2}_{x}$ for all $c_1 < \infty$ and $c_2<2$,
    \item\label{conv_item_strongu_local} $u^{\theta} \to u$ in $L^{R_i(t) - \delta}((0,T)\times \mathcal{B}^i_{2r})$ for all $\delta>0$,
    \
    \item\label{conv_item_sing_u} $\theta^{1/s_{\max}}\, u^{\theta} \to 0$ in $L^{s_{\max}}_{t,x}$,
    \item\label{conv_item_weak_monotone_operator_S} $S(t,x,Du^{\theta}) \rightharpoonup \chi$ in $L^{s'(t,x)}(\Omega_T)$ and $L^{r_i'(t)}_{t,x}((0,T)\times \mathcal{B}^i_{2r})$ for some $\chi \in L^{s'(t,x)}(\Omega_T)$,
    \item\label{conv_item_Dutheta} $Du^{\theta} \rightharpoonup Du$ weakly in $L^{s(t,x)}(\Omega_T)$,
    \item\label{conv_item_L1_weak} $\theta\,\nabla_{\xi}m(|D u^{\theta}|) \to 0$ in $L^{s_{\max}}_{t,x}$,
\item\label{conv_item_est4and1/2-A} $p^{i,\theta}_1 \overset{*}{\rightharpoonup} \widetilde{p^i_1}$ in $L^{r_i'(t)}_{t,x}$ and $L^{s'_{\max}}(0,T; L^{\infty}_{\text{loc}}(\R^d\setminus \mathcal{B}^i_{r}))$,
\item\label{conv_item_est4and1/2-C} $p^{i,\theta}_2 \overset{*}{\rightharpoonup} \widetilde{p^i_2}$ in $L^{R_i(t)/2}_{t,x}$ and $L^{s_0/2}(0,T; L^{\infty}_{\text{loc}}(\R^d\setminus \mathcal{B}^i_{r}))$,
\item \label{conv_item_p3} $p_3^\theta \rightharpoonup \widetilde{p_3}$ in $L^2_{t,x}$
\item\label{conv_item_est4and1/2-F} $p^\theta_4 \to 0$ in $L^{s'_{\max}}_{t,x}$,
    \item\label{conv_item_est9and5/8} $p^\theta_h \overset{*}{\rightharpoonup} \widetilde{p_h}$ in $L^{s'_{\max}}_{t,x}$ and $L^{\infty}_tW^{2,\infty}_{x, loc}$,
    \item\label{conv_item_est_add_1} $\nabla p^\theta_h \to \nabla \widetilde{p_h}$ in $L^{c}_{t} L^c_{x,loc}$ for all $c <\infty$,
    \item\label{conv_item_est_add_2/5} $\nabla^2 p^\theta_h \to \nabla^2 \widetilde{p_h}$ in $L^{c}_{t} L^c_{x,loc}$ for all $c <\infty$,
    \item\label{conv_item_est_add_2} $D(\nabla p^\theta_h) \to D(\nabla \widetilde{p_h})$ in $L^{c}_{t} L^c_{x,loc}$ for all $c <\infty$,
    \item\label{conv_item_spaceintegrals} $\int_{\Omega} |(u^{\theta} + \nabla p_h^\theta)(t,x)|^2\psi(x) \diff x \to \int_{\Omega} |(u + \nabla \widetilde{p_h})(t,x)|^2\psi(x) \diff x$ for a.e. $t \in [0,T]$ and $\psi \in C^\infty_c(\Omega)$,
    \item\label{conv_theta_forcing_term} $\int_{\R^d}(u^\theta - v)f^\theta\diff v \rightharpoonup \int_{\R^d}(u - v)f\diff v$ in $L^2_{t,x}$.
\end{enumerate}
\end{lem}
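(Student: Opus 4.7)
The plan is to extract all convergences via a single diagonal subsequence, grouping the items into three categories: weak/weak$^*$ limits from Banach--Alaoglu, strong limits obtained by Aubin--Lions compactness or velocity averaging, and "vanishing" statements that exploit explicit $\theta$-powers.

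First, items \ref{conv_f_theta}--\ref{conv_momentum_f_theta}, \ref{c0}, \ref{conv_item_weak_monotone_operator_S}, \ref{conv_item_Dutheta}, \ref{conv_item_est4and1/2-A}--\ref{conv_item_p3}, and \ref{conv_item_est9and5/8} follow directly from Banach--Alaoglu applied to the respective bounds \ref{lemmathet_kinetic_est_1}--\ref{lemmathet_kinetic_est_3}, \ref{lemmathet_est1}, \ref{lemmathet_est3}, \ref{lemmathet_est4and1/2-A}--\ref{lemmathet_est9and5/8} of Theorem \ref{res:estimate_on_approx_seq} in the relevant reflexive or separable-predual spaces; the weak limit in \ref{conv_item_Dutheta} is identified as $Du$ by linearity together with \ref{c0}. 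The vanishing statements \ref{conv_item_sing_u}, \ref{conv_item_L1_weak}, \ref{conv_item_est4and1/2-F} come from rewriting \ref{lemmathet_est4}, \ref{lemmathet_est5}, \ref{lemmathet_est4and1/2-F} as $\theta^{1/s_{\max}}$ times a $\theta$-uniformly bounded quantity in the appropriate $L^{s_{\max}}_{t,x}$ or $L^{s'_{\max}}_{t,x}$ norm and sending $\theta \to 0$.

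Second, the strong convergences \ref{conv_item_strongu}--\ref{conv_item_strongu_local} of $u^\theta$ follow from the Aubin--Lions lemma applied to the triple $W^{1,s_{\min}}_{0,x} \hookrightarrow\hookrightarrow L^2_x \hookrightarrow V_{2,d}^*$, combining the spatial bound \ref{lemmathet_est3i1/2} with the time-derivative bound \ref{lemmathet_est8}. This yields $u^\theta \to u$ strongly in $L^{s_{\min}}_t L^2_x$ and, after a further extraction, pointwise a.e.\ on $\Omega_T$. Interpolating this a.e.\ convergence against \ref{lemmathet_est2and1/2} and \ref{lemmathet_est1} via Vitali's theorem upgrades the convergence to $L^c_{t,x}$ for any $c<s_0$, to $L^{R_i(t)-\delta}((0,T)\times\mathcal{B}^i_{2r})$ for any $\delta>0$, and to $L^{c_1}_t L^{c_2}_x$ for $c_1<\infty$, $c_2<2$.

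The main technical step is the strong convergence of the kinetic moments \ref{conv_rho_f_theta_strong}, \ref{conv_momentum_f_theta_strong}, for which I would invoke a velocity averaging lemma in the spirit of \cite{perthrame1998alimiting, rein2004globalweak}. The uniform $L^\infty_{t,x,v}$ bound on $f^\theta$ from \ref{lemmathet_kinetic_est_1}, the kinetic energy bound from \eqref{app:energy_kinetic}, and the kinetic equation \eqref{app:existence_kinetic} with transport field $(u^\theta - v)$ controlled uniformly in $L^2_{t,x,\mathrm{loc}}$, supply the hypotheses needed to extract strong compactness in some $L^{p}_{t,x}$ of the averages $\int_{\R^d} \phi(v) f^\theta \diff v$ for compactly supported $\phi$. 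Extending to $\phi(v)=1$ and $\phi(v)=v$ by a standard truncation tail estimate controlled by $(1+|v|^2)f^\theta$, and then applying Vitali using the uniform bounds \ref{lemmathet_kinetic_est_2}, \ref{lemmathet_kinetic_est_3}, yields the exponent ranges asserted in \ref{conv_rho_f_theta_strong}, \ref{conv_momentum_f_theta_strong}. Combining this with \ref{conv_item_strongu_c1c2} gives the coupling-term convergence \ref{conv_theta_forcing_term}.

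Finally, items \ref{conv_item_est_add_1}--\ref{conv_item_est_add_2} for the harmonic pressure follow from interior elliptic regularity for $\Delta p_h^\theta = 0$ (see \eqref{eq:pressure_is_harmonic}) together with the uniform bound \ref{lemmathet_est6}: classical interior estimates give uniform $L^\infty_t W^{k,\infty}_{x,\mathrm{loc}}$ control for every $k$, whence Arzelà--Ascoli in the spatial variable combined with the weak$^*$ convergence in time delivers strong local $L^c_{t,x}$ convergence of $p_h^\theta$ and all of its spatial derivatives. The most delicate point, which I expect to be the main obstacle, is the pointwise-in-time convergence \ref{conv_item_spaceintegrals}: here I would combine the strong local convergence of $u^\theta + \nabla p_h^\theta$ inside $\mathrm{supp}\,\psi$ with the time-derivative bound \ref{lemmathet_est7} to show that $t \mapsto \int_\Omega |(u^\theta + \nabla p_h^\theta)|^2 \psi \diff x$ is equicontinuous in $t$ uniformly in $\theta$, by testing the evolution equation against a regularization of $(u^\theta + \nabla p_h^\theta)\psi$ and exploiting the $L^1_t (W^{1,s_{\max}}_{0,x})^*$ bound. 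Passing to the limit along a countable dense set of times and using equicontinuity then extends convergence to a.e.\ $t \in (0,T)$.
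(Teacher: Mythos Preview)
Your overall strategy matches the paper's, but there is a genuine gap in your treatment of \ref{conv_item_est_add_1}--\ref{conv_item_est_add_2}. Arzel\`a--Ascoli in space together with weak$^*$ convergence in $L^\infty_t$ does \emph{not} yield strong convergence in $L^c_t L^c_{x,\mathrm{loc}}$: the sequence $p_h^\theta$ could oscillate in time while remaining uniformly bounded in $L^\infty_t W^{k,\infty}_{x,\mathrm{loc}}$, and nothing in your argument rules this out. What is missing is time-compactness for $\nabla p_h^\theta$, and here the paper's route is essential: apply Aubin--Lions to the \emph{sum} $u^\theta + \nabla p_h^\theta$, using the spatial bounds \ref{lemmathet_est3i1/2}, \ref{lemmathet_est6} and the time-derivative bound \ref{lemmathet_est7}, to obtain $(u^\theta + \nabla p_h^\theta)\to (u+\nabla\widetilde{p_h})$ strongly in $L^1_t L^1_{x,\mathrm{loc}}$. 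Subtracting the already-established strong convergence of $u^\theta$ gives $\nabla p_h^\theta \to \nabla\widetilde{p_h}$ in $L^1_{t,x,\mathrm{loc}}$, and only then does the uniform $L^\infty_t W^{k,\infty}_{x,\mathrm{loc}}$ bound (via harmonicity) upgrade this to $L^c_{t,x,\mathrm{loc}}$ for all $c$ and all spatial derivatives. You have the bound \ref{lemmathet_est7} on your list but never use it here; it is precisely the ingredient that supplies the missing time-compactness.

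Two smaller points. First, for \ref{conv_item_sing_u} your ``$\theta^{1/s_{\max}}$ times a uniformly bounded quantity'' does not quite work: \ref{lemmathet_est4} only gives that $\theta^{1/s_{\max}} u^\theta$ itself is bounded in $L^{s_{\max}}_{t,x}$, not that $u^\theta$ is, so no extra power of $\theta$ remains to force convergence to zero. The paper fixes this by using Korn and Sobolev to bound $\theta^{1/s_{\max}} u^\theta$ in $L^c_{t,x}$ for some $c>s_{\max}$, and then interpolating against $\theta^{1/s_{\max}} u^\theta \to 0$ in $L^1_{t,x}$. Second, once \ref{conv_item_est_add_1} is secured as above, \ref{conv_item_spaceintegrals} becomes much simpler than your proposed equicontinuity argument: since $u^\theta\to u$ in $L^2_{t,x}$ (from \ref{conv_item_strongu}, as $s_0>2$) and $\nabla p_h^\theta\to\nabla\widetilde{p_h}$ in $L^2_{t,x,\mathrm{loc}}$, the localized $L^2$-energy $\int_\Omega |u^\theta+\nabla p_h^\theta|^2\psi\diff x$ converges in $L^1_t$, hence a.e.\ in $t$ along a further subsequence.
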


\begin{proof}
 We begin with the explanation for ${f^\theta}$. The convergences \ref{conv_f_theta}, \ref{conv_rho_f_theta}, \ref{conv_momentum_f_theta} follow from the bounds \ref{lemmathet_kinetic_est_1}, \ref{lemmathet_kinetic_est_2}, \ref{lemmathet_kinetic_est_3} and the Banach--Alaoglu theorem. The only question is the identification of the limits in \ref{conv_rho_f_theta} and \ref{conv_momentum_f_theta}. We will do that soon, but first we employ the celebrated Velocity averaging lemma \ref{velocity_averaging} (see also \cite[Lemma 5.1]{rein2004globalweak} or \cite{perthrame1998alimiting}) to deduce the strong compactness of $\{\int_{\R^d}f^\theta\diff v\}$ and $\{\int_{\R^d}v f^\theta\diff v\}$ in $L^1_{t,x}$. Hence, by interpolation we get \ref{conv_rho_f_theta_strong}, \ref{conv_momentum_f_theta_strong}, but again we need to identify the limit. Now, we have everything we need to manage it. Let
$$
\int_{\R^d}f^\theta\diff v \rightarrow \Lambda,\text{ in }L^1_{t,x}.
$$
Then, for any $\phi\in C^\infty_c([0, T]\times\Omega)$
\begin{align}\label{conv:strong_to_Lambda}
\int_{\Omega_T}\int_{\R^d}\phi\,f^\theta\diff v\diff x\diff t \rightarrow \int_{\Omega_T}\phi\,\Lambda\diff x\diff t.
\end{align}
Fix $\varphi_k\in C^\infty_c(\R^d)$, such that $\varphi_k\equiv 1$, for $|\varphi_k|> k+1$, $\varphi_k\equiv 0$ for $|\varphi_k| \leq k$, $0\leq\varphi_k\leq 1$. Then, 
\begin{align*}
    &\left|\int_{\Omega_T}\int_{\R^d}\phi\,f^\theta\diff v\diff x\diff t - \int_{\Omega_T}\int_{\R^d}\phi\,f\diff v\diff x\diff t\right|\leq \left|\int_{\Omega_T}\int_{\R^d}\phi\,(1-\varphi_k)f^\theta\diff v\diff x\diff t\right|\\
    &\qquad+ \left|\int_{\Omega_T}\int_{\R^d}\phi\,\varphi_k(f^\theta - f)\diff v\diff x\diff t\right| + \left|\int_{\Omega_T}\int_{\R^d}\phi\,(1-\varphi_k)f\diff v\diff x\diff t\right|\\
    & =: I_1^k + I_2^k + I_3^k.
\end{align*}
By \eqref{app:energy_kinetic}
$$
I_1^k \leq \|\phi\|_\infty\int_{\Omega_T}\int_{\{|v| > k\}}f^\theta\diff v\diff x\diff t \leq \frac{\|\phi\|_\infty}{k^2}\int_{\Omega_T}\int_{\R^d}|v|^2f^\theta\diff v\diff x\diff t \leq \frac{C}{k^2}.
$$
Using \ref{conv_f_theta}
$$
\int_{\Omega_T}\int_{\R^d}\phi\,\varphi_k(v)f^\theta\diff v\diff x\diff t \rightarrow \int_{\Omega_T}\int_{\R^d}\phi\,\varphi_k(v)f\diff v\diff x\diff t,
$$
thus $I_2^k \rightarrow 0$. At last, due to the Lebesgue's dominated convergence theorem $I_3^k\rightarrow 0$. Hence,
\begin{align*}
    \int_{\Omega_T}\int_{\R^d}\phi\,f^\theta\diff v\diff x\diff t \rightarrow \int_{\Omega_T}\int_{\R^d}\phi\,f\diff v\diff x\diff t.
\end{align*}
Comparing it to \eqref{conv:strong_to_Lambda} we get the needed
$$
\Lambda = \int_{\R^d}f\diff v,\text{ for a.e. }(t, x)\in \Omega_T.
$$
Analogously, one can identify the limit for $\{\int_{\R^d}vf^\theta\diff v\}$.

 Moving to the convergences of the velocity field, we again note that the arguments can be found in \cite{bulicek2023nonnewtonian}, but we provide them for the sake of completeness. First, note that using \ref{lemmathet_est3i1/2} and \ref{lemmathet_est8}, the sequence $\{u^{\theta}\}$ is strongly compact in $L^1_{t,x}$ by Aubin--Lions lemma \ref{aubin-lions}. Hence, the strong convergences \ref{conv_item_strongu}, \ref{conv_item_strongu_c1c2} and \ref{conv_item_strongu_local} follow from the interpolation and the bounds in $L^{s_0}_{t,x}$ (see \ref{lemmathet_est2and1/2}), $L^\infty_t L^2_x$ and $L^{R_i(t)}((0,T)\times \mathcal{B}^i_{2r})$ respectively. To see \ref{conv_item_sing_u}, we note that \ref{lemmathet_est4} and Korn's inequality implies uniform bound $\{\theta^{1/s_{\max}} \nabla u^{\theta}\}$ in $L^{s_{\max}}_{t,x}$ so that by Sobolev embedding and Dirichlet boundary condition, we have uniform bound $\{\theta^{1/s_{\max}} u^{\theta}\}$ in $L^{c}_{t,x}$ for some $c > s_{\max}$. As $\theta^{1/s_{\max}} u^{\theta} \to 0$ in $L^1_{t,x}$, we conclude by interpolation. Moving further, the convergence results \ref{conv_item_weak_monotone_operator_S}--\ref{conv_item_est9and5/8}
follow from the Banach--Alaoglu theorem and estimates \ref{lemmathet_est3}, \ref{lemmathet_est2} and \ref{lemmathet_est5}--\ref{lemmathet_est6},
respectively. Next, we can use \ref{lemmathet_est3i1/2}, \ref{lemmathet_est6},  \ref{lemmathet_est7} and the Aubin--Lions \ref{aubin-lions} lemma to conclude that 
$$
(u^{\theta}+\nabla p_h^{\theta}) \to (u+\nabla \widetilde{p_h}) \qquad \textrm{ in } L^{1}_{t} L^{1}_{x, loc}.
$$
Thus, \ref{conv_item_est_add_1} follows from \ref{lemmathet_est6}. Finally, since $p_h^{\theta}$ is harmonic with respect to the spatial variable, we have that $\|p^{\theta}_h - \widetilde{p_h}\|_{W^{k,2}(\Omega'')}\le C(k,\Omega'', \Omega')\|p^{\theta}_h - \widetilde{p_h}\|_{L^1(\Omega')}$ for all $\Omega'' \Subset \Omega' \subset \Omega$ and all $k$. Consequently, \ref{conv_item_est_add_2/5} and \ref{conv_item_est_add_2} follow from \ref{conv_item_est_add_1}. The property~\ref{conv_item_spaceintegrals} holds true because of the presence of the function $\psi$ having compact support in $\Omega$ and thus we can combine \ref{conv_item_est_add_1} and \ref{conv_item_strongu_c1c2} and use the classical properties of the Lebesgue spaces. At last for \ref{conv_theta_forcing_term} we note that \ref{conv_rho_f_theta_strong}, \ref{conv_momentum_f_theta_strong}, \ref{conv_item_strongu} imply
$$
\int_{\R^d}(u^\theta - v)f^\theta\diff v \rightarrow \int_{\R^d}(u - v)f\diff v,\text{ a.e. in }\Omega_T.
$$
Since by \eqref{app:energy_kinetic}, \ref{lemmathet_kinetic_est_1} $\{\int_{\R^d}(u^\theta - v)f^\theta\diff v\}$ is bounded in $L^2_{t,x}$, we get what is needed by the Banach--Alaoglu theorem.
\end{proof}

 Now, we recall our standard notation
$$
F^\theta := -\int_{\R^d}(u^\theta - v)f^\theta\diff v,\quad F := -\int_{\R^d}(u - v)f\diff v.
$$
Then, for each $\theta \in (0,1)$ we use Theorem~\ref{thm:existence_approximation} to obtain the distributional formulations of kinetic equation and momentum equations without pressure
\begin{align}\label{app:existence_final_proof_2}
    \int_{0}^T\int_{\Omega\times\R^d}f^\theta(\p_t\phi + v\cdot\nabla_x\phi + (u^\theta - v)\cdot\nabla_v\phi)\diff v\diff x\diff t = \int_{\Omega\times\R^d}f_0\,\phi(0, x, v)\diff v\diff x,
\end{align}
for all $\phi\in C^\infty_c([0, T)\times\overline{\Omega}\times\R^d)$,
\begin{align}\label{app:existence_in_final_proof}
    \int_{\Omega_T}-u^\theta \cdot \p_t\phi - u^\theta\otimes u^\theta : \nabla\phi + S^\theta(t, x, Du^\theta):D\phi\diff x\diff t = \int_{\Omega_T}F^\theta \cdot \phi\diff x\diff t + \int_{\Omega}u_0(x)\cdot \phi(0,x)\diff x,
\end{align}
satisfied for all vector-valued  $\phi\in C^\infty_c([0, T) \times \Omega)$ fulfilling  $\DIV \phi = 0$.  We can let $\theta \to 0$ in \eqref{app:existence_final_proof_2}, \eqref{app:existence_in_final_proof} to obtain
\begin{align}\label{app:existence_in_final_proof_after_limit_2}
    \int_{0}^T\int_{\Omega\times\R^d}f(\p_t\phi + v\cdot\nabla_x\phi + (u - v)\cdot\nabla_v\phi)\diff v\diff x\diff t = \int_{\Omega\times\R^d}f_0\,\phi(0, x, v)\diff v\diff x,
\end{align}
and
\begin{align}\label{app:existence_in_final_proof_after_limit}
    \int_{\Omega_T}-u \cdot \p_t \phi- u \otimes u : \nabla\phi + \chi:D\phi\diff x\diff t = \int_{\Omega_T}F \cdot\phi\diff x\diff t + \int_{\Omega}u_0(x) \cdot\phi(0,x)\diff x.
\end{align}
The convergences that need more explanation are the ones with the terms $f^\theta(u^\theta - v)\cdot\nabla_v\phi$, and the operator $S^{\theta}(t,x,Du^{\theta})$. For the first term, we note that $v\cdot\nabla_v\phi\in C^\infty_c([0, T)\times\Omega\times\R^d)$, which by \ref{conv_f_theta} implies
$$
\int_{\Omega_T}\int_{\R^d}f^\theta\,v\cdot\nabla_v\phi\diff v\diff x\diff t \rightarrow \int_{\Omega_T}\int_{\R^d}f\,v\cdot\nabla_v\phi\diff v\diff x\diff t.
$$
Since $\nabla_v\phi$ has a compact support in the kinetic velocity variable, by \ref{conv_item_strongu} $u^\theta\rightarrow u$ in $L^3((0, T)\times\Omega\times\supp\nabla_v\phi)$. Combining it with \ref{conv_f_theta} we obtain
$$
\int_{\Omega_T}\int_{\R^d}f^\theta\,u^\theta\cdot\nabla_v\phi\diff v\diff x\diff t \rightarrow \int_{\Omega_T}\int_{\R^d}f\,u\cdot\nabla_v\phi\diff v\diff x\diff t.
$$
As for the stress tensor, by \eqref{eq:def_of_reg_operator}, we may write $S^{\theta}(t,x,Du^{\theta}) = S(t,x,Du^{\theta}) + \theta \nabla_{\xi} m(|Du^{\theta}|)$. Then, by \ref{conv_item_L1_weak}, we know that the regularizing term converges in $L^1_t L^1_x$, which is sufficient to perform the desired passage to the limit $\theta \to 0$.\\

 Having already \eqref{app:existence_in_final_proof_after_limit_2} - \eqref{app:existence_in_final_proof_after_limit}, our proof will conclude after we show $\chi(t,x) = S(t,x,Du)$. Our goal will be to utilize the classical Minty's monotonicity trick. Hence, we will need to use some form of energy equality for our limiting equation, which will allow us to derive a needed bound on the mixed term
 $$
   \limsup_{\theta \to 0}\int_0^t\int_\Omega S(t, x, Du^\theta): Du^\theta \psi(x)\diff x\diff \tau.
 $$
 Since such an equality is not available for the momentum equation without pressure \eqref{app:existence_in_final_proof_after_limit}, this is the exact place where we need the provided Theorem \ref{thm:local_energy_equality}. Indeed, let us apply Theorem~\ref{thm:local_energy_equality}. First, we look at the available formulation with pressures
\begin{equation}\label{eq:weak_form_with_press_and_IC_in_final_proof}
\begin{split}
    \int_{\Omega_T}-&(u^{\theta} + \nabla p_h^\theta)\cdot \p_t\phi - u^{\theta}\otimes u^{\theta}:\nabla\phi \,+\, S^{\theta}(t,x,Du^{\theta}) : D\phi\diff x \diff t = \\ 
    = \int_{\Omega}&u_0(x)\cdot \phi(0, x) \diff x -  \int_{\Omega_T} \left(\sum_{i = 1}^N(p_1^{i,\theta}+p_2^{i,\theta} ) + p^\theta_3 + p_4^{\theta} \right)\,\DIV\phi\diff x \diff t + \int_{\Omega_T}F^\theta \cdot \phi\diff x \diff t
\end{split}
\end{equation}
satisfied for all $\phi\in C^\infty_c([0, T) \times \Omega)$. We can let $\theta \to 0$ in \eqref{eq:weak_form_with_press_and_IC_in_final_proof} similarly as above to obtain
\begin{equation}\label{eq:weak_form_with_press_and_IC_in_final_proof_after_limit}
\begin{split}
    \int_{\Omega_T}-&(u + \nabla \widetilde{p_h}) \cdot \p_t\phi - u\otimes u:\nabla\phi \,+\, \chi : D\phi\diff x \diff t = \\
     = \int_{\Omega}&u_0(x) \cdot\phi(0, x) \diff x - \left(\int_{\Omega_T}\sum_{i = 1}^N (\widetilde{p^i_1} +\widetilde{p^i_2}) + \widetilde{p_3} + \widetilde{p_4}\right) \,\DIV\phi\diff x \diff t + \int_{\Omega_T}F \cdot \phi\diff x \diff t.
\end{split}
\end{equation}
To have a point of comparison, we can apply Theorem~\ref{thm:local_energy_equality} directly to \eqref{app:existence_in_final_proof_after_limit}. This yields pressures $p^i_1$, $p^i_2$, $p_3$, $p_4$ and $p_h$ with a distributional formulation as \eqref{eq:weak_form_with_press_and_IC_in_final_proof_after_limit} but with $\widetilde{p_j^i}$, $\widetilde{p_j}$ and $\widetilde{p_h}$ replaced by ${p_j^i}$, $p_j$ and $p_h$ respectively. We would like to identify the respective pressures to obtain the same distributional formulations. On one hand, using simply the uniqueness (linearity) established in the Lemma \ref{lem:integrability_of_pi}, we can easily deduce $\widetilde{p_j^i} = p_j^i$ and $\widetilde{p_j} = p_j$ almost everywhere. On the other hand, $p_h$ is obtained uniquely up to the condition
$$
\int_{\Omega}p_h(t, x)\diff x = 0,
$$
hence it is enough to show that
$$
\int_{\Omega}\widetilde{p_h}(t, x)\diff x = 0.
$$
Indeed, from the weak convergence \ref{conv_item_est9and5/8}, the strong convergence \ref{conv_item_est_add_1} and the Poincar\'{e} inequality, we may deduce that for almost all $t\in (0,T)$
$$
0 = \lim_{\theta\to 0}\int_{\Omega}p_h^\theta(t, x)\diff x = \int_{\Omega}\widetilde{p_h}(t, x)\diff x.
$$
The distributional formulations \eqref{eq:weak_form_with_press_and_IC_in_final_proof} and \eqref{eq:weak_form_with_press_and_IC_in_final_proof_after_limit} are not the only things we obtain through the Theorem \ref{thm:local_energy_equality}. More importantly, we can deduce the local energy equalities obtained from \eqref{app:existence_in_final_proof} and \eqref{app:existence_in_final_proof_after_limit}. We have respectively
\begin{equation}\label{local_energy_equality_theta}
\begin{split}
    \frac{1}{2}&\int_\Omega |u^{\theta}(t, x) + \nabla p_h^{\theta}(t, x)|^2 \, \psi(x) \diff x + \int_{0}^t \int_\Omega  S^{\theta}(\tau, x, Du^\theta):D(\psi(x)(u^{\theta}+\nabla p_h^\theta)(\tau,x)) \diff x \diff \tau = \\
    &= \frac{1}{2}\int_\Omega  |u_0(x)|^2 \, \psi(x) \diff x + \int_{0}^t \int_\Omega (u^{\theta}\otimes u^{\theta}) : \nabla (\psi (u^{\theta} + \nabla p_h^{\theta}))\diff x\diff \tau + \\
    &+  \int_{0}^t\int_{\Omega}F^\theta\cdot(u^{\theta} + \nabla p_h^{\theta})\psi \diff x \diff \tau - \int_{0}^t \int_{\Omega} \left(\sum_{i=1}^N (p_1^{i,\theta} + p_2^{i,\theta}) + p^\theta_3 + p_4^{\theta} \right)(u^{\theta} + \nabla p_h^{\theta} )\cdot\nabla\psi \diff x \diff \tau
\end{split}
\end{equation}
and also 
\begin{equation}\label{local_energy_equality_after_limit_pass}
\begin{split}
    \frac{1}{2}&\int_\Omega |u(t, x) + \nabla p_h(t, x)|^2 \, \psi(x) \diff x + \int_{0}^t \int_\Omega  \chi(\tau, x):D(\psi(x)(u + \nabla p_h)(\tau,x)) \diff x \diff \tau = \\
    &= \frac{1}{2}\int_{\Omega}  |u_0(x)|^2 \, \psi(x) \diff x + \int_{0}^t \int_\Omega  (u\otimes u) : \nabla (\psi (u + \nabla p_h))\diff x\diff \tau + \\
    &+  \int_{0}^t\int_{\Omega}F\cdot(u + \nabla p_h)\psi\diff x \diff \tau -  \int_{0}^t \int_{\Omega} \left(\sum_{i=1}^N (p_1^i + p_2^i) + p_3  \right)(u + \nabla p_h )\cdot \nabla\psi \diff x \diff \tau,
\end{split}
\end{equation}
for a.e. $t \in (0,T)$ and all test functions $\psi \in C_c^{\infty}(\Omega)$. Note that in principle \eqref{local_energy_equality_after_limit_pass} should be stated with $\widetilde{p_j^i}$, $\widetilde{p_j}$ and $\widetilde{p_h}$ replacing ${p_j^i}$, $p_j$ and $p_h$, but we were able to identify them almost everywhere in the previous consideration. As mentioned at the beginning, we want now to compare the equality \eqref{local_energy_equality_theta} with \eqref{local_energy_equality_after_limit_pass} in the limit $\theta \to 0$ to identify $\chi$ via monotonicity arguments. \\

Let us start with the simple terms, whose limits are easily obtainable via the Lemma \ref{lem:convergence_in_theta_approx}. We notice that a direct application of \ref{conv_item_spaceintegrals} yields
\begin{align}\label{eq:conv_energy_with_theta_1}
    \frac{1}{2}\int_\Omega |u^{\theta}(t, x) + \nabla p_h^{\theta}(t, x)|^2\psi(x)\diff x \to \frac{1}{2}\int_\Omega |u(t, x) + \nabla p_h(t, x)|^2\psi(x)\diff x,
\end{align}
and by \ref{conv_theta_forcing_term} together with  \ref{conv_item_est_add_1},  \ref{conv_item_strongu} we know
\begin{equation}\label{eq:conv_energy_with_theta_2}
    \int_{0}^t \int_{\Omega} F^\theta\cdot (u^\theta + \nabla p_h^{\theta}) \psi \diff x \diff \tau \to \int_{0}^t \int_{\Omega}  F \cdot(u + \nabla p_h)\psi \diff x \diff \tau,
\end{equation}
moreover \ref{conv_item_p3} with \ref{conv_item_est_add_1}, \ref{conv_item_strongu} provides us
\begin{equation}\label{eq:convergence_term_p_3_theta}
\int_{0}^t \int_{\Omega} p^\theta_3(u^{\theta} + \nabla p_h^{\theta} )\cdot\nabla\psi \diff x \diff \tau\to \int_{0}^t \int_{\Omega} p_3(u + \nabla p_h )\cdot\nabla\psi \diff x \diff \tau,
\end{equation}
where all of the convergences are for a.e. $t\in (0, T)$. Furthermore, using the estimate \ref{lemmathet_est4and1/2-F} and convergences \ref{conv_item_sing_u} and \ref{conv_item_est_add_1}, we deduce
$$
\left|\int_{0}^t \int_{\Omega} p_4^{\theta}(u^{\theta} + \nabla p_h^{\theta} )\nabla\psi \diff x \diff \tau\right| \leq \|p_4^{\theta}\, \theta^{-1/s_{\max}}\|_{L^{s'_{\max}}_{t,x}} \, \|\theta^{1/s_{\max}}(u^{\theta} + \nabla p_h^{\theta} )\nabla\psi \|_{L^{s_{\max}}_{t,x}} \to 0.
$$
Thus,
\begin{equation}\label{eq:convergence_term_p_4_theta}
\int_{0}^t \int_{\Omega} p_4^{\theta}(u^{\theta} + \nabla p_h^{\theta} )\cdot\nabla\psi \diff x \diff \tau\to 0.
\end{equation}
The one term that needs a longer explanation is the convergence 
\begin{equation}\label{eq:conv_energy_with_theta_3}
\int_{0}^t \int_\Omega (u^{\theta}\otimes u^{\theta}) : \nabla (\psi (u^{\theta} + \nabla p_h^\theta))\diff x\diff \tau \to \int_{0}^t \int_\Omega (u\otimes u) : \nabla (\psi (u + \nabla p_h))\diff x\diff \tau.
\end{equation}
We split the argument into two parts using the equality $\psi (u^{\theta} + \nabla p_h^\theta) = \psi u^{\theta} + \psi \nabla p_h^\theta$.

For the term $\int_{0}^t \int_\Omega (u^{\theta}\otimes u^{\theta}) : \nabla (\psi \nabla p_h^{\theta})\diff x\diff \tau$ the convergence of the integral is a simple consequence of \ref{conv_item_est_add_1}, \ref{conv_item_est_add_2/5} and $u^{\theta} \to u$ in $L^2_{t,x}$ from \ref{conv_item_strongu}.

For the term $\int_{0}^t \int_\Omega (u^{\theta}\otimes u^{\theta}) : \nabla (\psi u^{\theta})\diff x\diff \tau$ we have the following identity
$$
\int_{0}^t \int_\Omega (u^{\theta}\otimes u^{\theta}) : \nabla (\psi u^{\theta}) \diff x\diff \tau=  \frac{1}{2}
\int_{0}^t \int_\Omega |u^{\theta}|^2\, u^{\theta} \cdot \nabla \psi \diff x\diff \tau - \frac{1}{2}\int_{0}^t \int_\Omega \psi \DIV u^{\theta}\,  |u^{\theta}|^2 \diff x\diff \tau.
$$
The first term converges to $\frac{1}{2}
\int_{0}^t \int_\Omega |u|^2\, u \cdot \nabla \psi \diff x\diff \tau$ because $u^{\theta} \to u$ strongly in $L^3_{t,x}$ as in \ref{conv_item_strongu} (note that $s_0>3$). The second term vanishes by the incompressibility condition so that we obtain \eqref{eq:conv_energy_with_theta_3}.\\

Let us move over to the troublesome terms. To conclude our argument, we need to show that for $i=1,...,N$ and $j = 1, 2$ the convergences
\begin{equation}\label{eq:conv_press_p_1_p_2_theta}
\int_{0}^t \int_{\Omega} p_j^{i,\theta}(u^{\theta} + \nabla p_h^{\theta} )\cdot\nabla\psi \diff x \diff \tau\to \int_{0}^t \int_{\Omega} p_j^i(u + \nabla p_h )\cdot\nabla\psi \diff x \diff \tau
\end{equation}
hold. We separate those terms from other considerations, as in this place we shall utilize the different, complementary regularity results for the fluid velocity field and the pressures; split the integral into integrals over $\Omega \cap \mathcal{B}^{i}_{2r}$ and over $\Omega \setminus \mathcal{B}^{i}_{2r}$. Let $j=1$ and $i \in \{1,...,N\}$ be fixed. 

When looking at the integral over $\Omega\cap\mathcal{B}^{i}_{2r}$, we use that $p_1^{i,\theta}$ converges weakly in $L^{r_i'(t)}_{t,x}$, so it is sufficient to have strong convergence of $u^{\theta} + \nabla p_h^{\theta}$ in $L^{r_i(t)}_{t,x,loc}$ thanks to the compact support of $\psi$. This follows from \ref{conv_item_strongu_local} and \ref{conv_item_est_add_1} as $R_i(t)-r_i(t)\geq \frac{s_{\min}}{d}$.

On the other hand, on $\Omega \setminus \mathcal{B}^{i}_{2r}$ we use the weak$^*$ convergence of $p_1^{i,\theta}$ in $L^{s'_{\max}}_t L^{\infty}_x$ from \ref{conv_item_est4and1/2-A} and local strong convergence of $u^{\theta} + \nabla p_h^{\theta}$ in $L^{s_{\max}}_t L^1_x$ from \ref{conv_item_strongu_c1c2} and \ref{conv_item_est_add_1}.

Now, let $j = 2$ and $i \in \{1,...,N\}$ be fixed. As above, we look at the integral over $\Omega \cap \mathcal{B}^{i}_{2r}$ and over $\Omega \setminus \mathcal{B}^{i}_{2r}$.
 
When looking at the integral over $\Omega\cap\mathcal{B}^{i}_{2r}$ we use that $p_2^{i,\theta}$ converges weakly in $L^{R_i(t)/2}_{t,x}$, so it is sufficient to have strong convergence of $u^{\theta} + \nabla p_h^{\theta}$ in $L^{(R_i(t)/2)'}_{t,x,loc}$. However, we have $\left(\frac{R_i(t)}{2} \right)' < R_i(t)$  because $R_i(t) > 3$ (note that we already checked this in Section~\ref{section:4} below \eqref{eq:regularity_estimates_not_u_2}). Therefore, the required strong convergence follows from \ref{conv_item_strongu_local} and \ref{conv_item_est_add_1}.

On the other hand, on $\Omega \setminus \mathcal{B}^{i}_{2r}$ we use the weak$^*$ convergence of $p_2^{i,\theta}$ in $L^{s_0/2}_t L^{\infty}_x$ from \ref{conv_item_est4and1/2-C} and local strong convergence of $u^{\theta} + \nabla p_h^{\theta}$ in $L^{s_0/2}_t L^1_x$ from \ref{conv_item_strongu_c1c2} and \ref{conv_item_est_add_1}.\\

Combining the proven convergences \eqref{eq:conv_energy_with_theta_1}--\eqref{eq:conv_press_p_1_p_2_theta}, we obtain the following preliminary inequality 
\begin{equation}\label{eq:summary_step_convergences_theta}
\begin{split}
\limsup_{\theta \to 0} \int_{0}^t \int_\Omega  S^{\theta}(\tau, x, Du^\theta):D(\psi(x)&(u^{\theta}+\nabla p_h^\theta)(\tau,x)) \diff x \diff \tau\\
&\leq \int_{0}^t \int_\Omega  \chi(\tau, x):D(\psi(x)(u + \nabla p_h)(\tau,x)) \diff x \diff \tau,
\end{split}
\end{equation}
true for a.e. $t\in (0, T)$. We call it preliminary, because as mentioned before our goal is to provide a proof, that for a.e. $t \in (0,T)$ and $\psi \in C_c^{\infty}(\Omega)$
\begin{align}\label{conv:monotonicity_trick_0}
    \limsup_{\theta \to 0}\int_0^t\int_\Omega S(t, x, Du^\theta): Du^\theta \psi(x)\diff x\diff \tau \leq \int_0^t \int_\Omega \chi(t, x) : Du\,\psi(x)\diff x\diff \tau
\end{align}
is true. Indeed, let us argue that the preliminary inequality \eqref{eq:summary_step_convergences_theta} implies the standard inequality \eqref{conv:monotonicity_trick_0}. To this end, we decompose term on the (LHS) of \eqref{eq:summary_step_convergences_theta} into six parts $I_1$, $I_2$, $I_3$, $I_4$, $I_5$, $I_6$ as follows
\begin{align*}
& \int_{0}^t \int_\Omega  S^{\theta}(\tau, x, Du^\theta):D(\psi(x)(u^{\theta} + \nabla p_h^\theta)(\tau,x)) \diff x \diff \tau = \\
&  = \int_{0}^t \int_\Omega S(\tau, x, Du^\theta):Du^{\theta} \psi(x)  \diff x \diff \tau + \int_{0}^t \int_\Omega S(\tau, x, Du^\theta):[D(\nabla p_h^\theta)\,\psi(x)] \diff x \diff \tau\\
& \phantom{ = } + \int_{0}^t \int_\Omega S(\tau, x, Du^\theta):[\nabla\psi(x) \otimes (u^{\theta} + \nabla p_h^\theta)] \diff x \diff \tau + \int_{0}^t \int_\Omega \theta \, \nabla_\xi m(|Du^{\theta}|):D u^{\theta} \, \psi(x) \diff x\diff \tau \\
& \phantom{ = } + \int_{0}^t \int_\Omega \theta \, \nabla_\xi m(|Du^{\theta}|):D (\nabla p_h^{\theta}) \, \psi(x) \diff x\diff \tau + \int_{0}^t \int_\Omega \theta \, \nabla_\xi m(|Du^{\theta}|):(\nabla \psi(x) \otimes (u^{\theta} + \nabla p_h^\theta))  \diff x \diff \tau\\
&=: I_1 + I_2 + I_3 + I_4 + I_5 + I_6.
\end{align*}
The integral $I_1$ is the one appearing in \eqref{conv:monotonicity_trick_0}, and this is the one that we want to estimate. Moving further, looking at $I_2$, we get
\begin{equation}\label{eq:conv_theta_term_X2}
    \int_0^t \int_\Omega S(\tau, x, Du^\theta) : [D(\nabla p_h^\theta) \psi(x)]\diff x\diff \tau \to \int_0^t \int_\Omega \chi(\tau,x) : [D(\nabla p_h)\psi(x)]\diff x\diff \tau,
\end{equation}
since $S(t,x,Du^{\theta}) \rightharpoonup \chi$ in $L^{s'(t,x)}$ by \ref{conv_item_weak_monotone_operator_S}, so that $S(t,x,Du^{\theta}) \rightharpoonup \chi$ in $L^{s'_{\max}}_{t,x}$ and $D(\nabla p_h^\theta) \to D(\nabla p_h)$ in $L^{s_{\max}}_{t,x, loc}$, by \eqref{conv_item_est_add_2}. Next, for $I_3$, we will argue that the convergence
\begin{equation}\label{eq:conv_theta_term_X3}
\int_0^t \int_\Omega S(\tau, x, Du^\theta) : [\nabla\psi \otimes (u^\theta + \nabla p_h^\theta)]\diff x\diff \tau \to \int_0^t \int_\Omega \chi(\tau,x) : [\nabla \psi\otimes (u + \nabla p_h)]\diff x\diff \tau,
\end{equation}
holds. Again, we shall use the special properties of our partition of the domain, and the complementary convergence results. Let $1 = \sum_{i=1}^N \zeta_i$, where $\{\zeta_i\}$ is the partition of unity from Notation \ref{not:zeta}. For fixed $i\in \{1, ..., N\}$ we focus on the convergence of the integral
$$
\int_0^t \int_\Omega \zeta_i\, S(\tau, x, Du^\theta) : [\nabla\psi\otimes (u^\theta + \nabla p_h^\theta)]\diff x\diff \tau.
$$
Since $\zeta_i$ is supported in $\mathcal{B}^i_r$ we can use weak convergence of $S(\tau, x, Du^\theta)$ in $L^{r_i'(t)}_{t,x}$ from \ref{conv_item_weak_monotone_operator_S} and the strong convergence $u^{\theta} + \nabla p_h^\theta \to u + \nabla p_h$ in $L^{r_i(t)}_{t,x,loc}$ from \ref{conv_item_est_add_1} and \ref{conv_item_strongu_local} (this uses also $R_i(t)-r_i(t)\geq \frac{s_{\min}}{d}$). Summing over $i$, we get what is needed. Furthermore, for the terms $I_4$, $I_5$, $I_6$ we get
\begin{equation}\label{eq:X4_nonnegativity_and_X5}
I_4 \geq 0, \qquad \qquad I_5 \to 0, \qquad \qquad I_6 \to 0.
\end{equation}
Indeed, the convergence $I_5 \to 0$ follows directly from \ref{conv_item_L1_weak} and estimate \ref{lemmathet_est6}. Moreover for $I_6$, the argument is analogous to the one for \eqref{eq:convergence_term_p_4_theta}, because we have exactly the same integrability of $\theta \, \nabla_\xi m(|Du^{\theta}|)$ as of $p_4^{\theta}$. Plugging \eqref{eq:conv_theta_term_X2}--\eqref{eq:X4_nonnegativity_and_X5} into \eqref{eq:summary_step_convergences_theta} we obtain \eqref{conv:monotonicity_trick_0}. At last, we are able to end our proof via the monotonicity trick argument. Using the Assumption \ref{monotonicity_stress_tensor} we get
\begin{align}\label{eq:monoton_inequality_with_theta_and_eta}
    \int_{\Omega_T}(S(t, x, Du^\theta) - S(t, x, \eta)) : (Du^\theta - \eta) \, \psi(x) \geq 0
\end{align}
for any $\eta\in L^\infty_t L^\infty_x$. Now, let us study limits of two terms appearing in \eqref{eq:monoton_inequality_with_theta_and_eta}. First, since $\eta\in L^\infty_t L^{\infty}_x$, then $S(t, x, \eta)\in L^\infty_t L^{\infty}_x$. Hence, using the weak convergence \ref{conv_item_Dutheta}, we obtain
\begin{align}\label{conv:monotonicity_trick_1}
    \int_{\Omega_T}S(t, x, \eta) :  Du^\theta \, \psi(x) \diff x\diff t\to \int_{\Omega_T}S(t, x, \eta) : Du \,\psi(x)\diff x\diff t.
\end{align}
From \ref{conv_item_weak_monotone_operator_S} we easily deduce
\begin{align}\label{conv:monotonicity_trick_2}
    \int_{\Omega_T}S(t, x, Du^\theta) :  \eta \, \psi(x)\diff x\diff t \to \int_{\Omega_T}\chi(t, x) : \eta \, \psi(x)\diff x\diff t.
\end{align}
Thus, applying \eqref{conv:monotonicity_trick_0}, \eqref{conv:monotonicity_trick_1} and \eqref{conv:monotonicity_trick_2}, we may take $\limsup_{\theta \to 0}$ in \eqref{eq:monoton_inequality_with_theta_and_eta} to get
\begin{align*}
    \int_{\Omega_T}(\chi(t, x) - S(t, x, \eta)) : (Du - \eta) \, \psi(x)\diff x\diff t \geq 0.
\end{align*}
Using Lemma \ref{res:monot_trick} (Minty's monotonicity trick), we finally obtain $\chi(t,x) = S(t,x,Du)$ a.e..
\end{proof}

\appendix

\section{Musielak-Orlicz spaces \texorpdfstring{$L^{s(t, x)}(\Omega_T)$}{Ls}}\label{app:musielaki}

 Here we mark some of the basic properties of the variable exponent spaces $L^{s(t, x)}(\Omega_T)$. For the more details on the Musielak-Orlicz spaces, see \cite{chlebicka2019book}. We start with the definition

\begin{Def}\label{def:musielak_exponent_space}
Given a measurable function $s(t,x): \Omega_T \to [1,\infty)$, we let
\begin{align*}
L^{s(t,x)}(\Omega_T) = \left\{ \xi: \Omega_T \to \mathbb{R}^d: \mbox{ there is } \lambda>0 \mbox{ such that } \int_{\Omega_T} \left| \frac{\xi(t,x)}{\lambda}\right|^{s(t,x)} \diff x \diff t < \infty \right\}.
\end{align*}
Moreover if $s(t, x)$ satisfies the boundedness conditions \ref{ass:usual_bounds_exp} then this definition is equivalent to
$$
L^{s(t,x)}(\Omega_T) = \left\{ \xi: \Omega_T \to \mathbb{R}^d:  \int_{\Omega_T} \left| {\xi(t,x)}\right|^{s(t,x)} \diff x \diff t < \infty \right\}.
$$
\end{Def}

 Variable exponent spaces are Banach with the following norm

\begin{thm}
Let
\begin{equation}\label{intro:norm}
\| \xi \|_{L^{s(t,x)}} = \inf \left\{\lambda>0: \int_{\Omega_T} \left| \frac{\xi(t,x)}{\lambda}\right|^{s(t,x)} \diff x \diff t \leq 1 \right\}.
\end{equation}
Then, $\left(L^{s(t, x)}, \|\cdot\|_{L^{s(t, x)}}\right)$ is a Banach space.
\end{thm}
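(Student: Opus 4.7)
The plan is to verify the norm axioms for $\|\cdot\|_{L^{s(t,x)}}$ and then prove completeness by the usual Cauchy-subsequence/Fatou strategy familiar from classical $L^p$ theory, adapted to the modular framework. Throughout I write $\rho(\xi) := \int_{\Omega_T} |\xi(t,x)|^{s(t,x)}\diff x\diff t$ for the modular, and note that Assumption~\ref{ass:usual_bounds_exp} gives $1 < s_{\min} \le s(t,x) \le s_{\max} < \infty$, which ensures that $\rho$ is continuous with respect to monotone pointwise convergence and finite on the space defined in Definition~\ref{def:musielak_exponent_space}.

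First I would verify that \eqref{intro:norm} defines a norm. Positivity and the implication $\|\xi\|=0 \Rightarrow \xi=0$ a.e.\ follow from the fact that $\rho(\xi/\lambda)\le 1$ for all $\lambda>0$ forces $\xi=0$ a.e., using $s(t,x)\ge s_{\min}>1$. Homogeneity $\|c\xi\|=|c|\,\|\xi\|$ is immediate from a change of variables $\lambda \mapsto \lambda/|c|$ in the infimum. For the triangle inequality, the key observation is convexity of the modular: for $\xi,\eta\in L^{s(t,x)}$ and any $\lambda>\|\xi\|$, $\mu>\|\eta\|$, write $\nu=\lambda+\mu$ and decompose
\[
\frac{\xi+\eta}{\nu} = \frac{\lambda}{\nu}\cdot\frac{\xi}{\lambda} + \frac{\mu}{\nu}\cdot\frac{\eta}{\mu}.
\]
Since $r\mapsto r^{s(t,x)}$ is convex pointwise, $\rho\bigl((\xi+\eta)/\nu\bigr)\le \tfrac{\lambda}{\nu}\rho(\xi/\lambda)+\tfrac{\mu}{\nu}\rho(\eta/\mu)\le 1$, hence $\|\xi+\eta\|\le\lambda+\mu$. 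Taking infima yields $\|\xi+\eta\|\le \|\xi\|+\|\eta\|$. It is useful at this stage to also record the so-called unit ball property: $\rho(\xi)\le 1$ iff $\|\xi\|\le 1$, together with the elementary comparison $\min(\rho(\xi)^{1/s_{\min}},\rho(\xi)^{1/s_{\max}})\le \|\xi\|\le \max(\rho(\xi)^{1/s_{\min}},\rho(\xi)^{1/s_{\max}})$, which links convergence in norm to convergence of the modular.

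Next I would address completeness. Given a Cauchy sequence $\{\xi_n\}\subset L^{s(t,x)}$, extract a subsequence (still labelled $\xi_n$) with $\|\xi_{n+1}-\xi_n\|\le 2^{-n}$. Define $g_N = \sum_{n=1}^N |\xi_{n+1}-\xi_n|$ and $g=\sup_N g_N$. By the triangle inequality $\|g_N\|\le 1$ for every $N$, so $\rho(g_N)$ stays bounded by a constant depending only on $s_{\min},s_{\max}$; Fatou's lemma then gives $\rho(g)<\infty$, in particular $g<\infty$ a.e. Consequently $\xi_n(t,x)$ converges a.e. to some measurable limit $\xi(t,x)$. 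Applying Fatou to $|\xi-\xi_n|^{s(t,x)}$ gives
\[
\rho(\xi-\xi_n) \le \liminf_{m\to\infty}\rho(\xi_m-\xi_n),
\]
which tends to $0$ as $n\to\infty$ by the Cauchy property and the modular-norm comparison above. Hence $\xi_n\to \xi$ in norm, and $\xi\in L^{s(t,x)}$ since $\xi = \xi_1 + \lim (\xi_n-\xi_1)$ with both summands in the space.

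I expect no serious obstacle here; the only subtle point compared with the fixed-exponent case is that one cannot use a single exponent to pass between modular and norm convergence, so care is needed with the unit-ball property to close the Cauchy argument. Everything else reduces to convexity of $r\mapsto r^{s(t,x)}$ and Fatou's lemma, both of which are available without any regularity of $s(t,x)$ beyond measurability and the bounds in \ref{ass:usual_bounds_exp}.
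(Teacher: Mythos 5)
The paper does not actually prove this statement: it is recalled in Appendix~\ref{app:musielaki} as a classical fact from the theory of Musielak--Orlicz spaces and is referred to the literature (e.g.\ \cite{chlebicka2019book}, \cite{diening2011lebesgue}), so there is no in-paper proof to compare against. Your argument is correct and is essentially the standard textbook proof one would find in those references: Luxemburg norm axioms via convexity of the modular, the unit-ball property and the two-sided modular--norm comparison using the bounds $1<s_{\min}\le s\le s_{\max}<\infty$ from \ref{ass:usual_bounds_exp}, and a Riesz--Fischer-type completeness argument (rapidly Cauchy subsequence, pointwise a.e.\ limit via Fatou applied to $g=\sum|\xi_{n+1}-\xi_n|$, and a second application of Fatou to close the estimate). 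The steps that deserve the care you give them are indeed the right ones: the left-continuity of $\lambda\mapsto\rho(\xi/\lambda)$ (monotone convergence) is what makes the unit-ball property work, and the two-sided comparison between $\rho$ and $\|\cdot\|$ is exactly what replaces the single-exponent passage $\rho=\|\cdot\|^p$ used in the classical $L^p$ case. One small imprecision: once $\|g_N\|\le 1$, the unit-ball property already gives $\rho(g_N)\le 1$ directly, so no constant depending on $s_{\min},s_{\max}$ is needed there. Everything else is sound.
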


 We will be interested in the two kinds of convergences in the aformentioned spaces

\begin{Def}
We say that $\xi_n$ converges strongly to $\xi$ (denoted $\xi_n \to \xi$) in $L^{s(t, x)}(\Omega_T)$, if
$$
\|\xi_n - \xi\|_{L^{s(t, x)}}\rightarrow 0
$$
and that $\xi_n$ converges modularly to $\xi$, if there exists $\lambda > 0$ such that
$$
\int_{\Omega_T} \left| \frac{\xi_n(t,x) - \xi(t,x)}{\lambda} \right|^{s(t,x)} \diff x \diff t \to 0.
$$
\end{Def}

 Modular and strong convergences are connected in the following form

\begin{thm} The following equivalence holds true:
\begin{align*}
    \|\xi_n - \xi\|_{L^{s(t, x)}} \to 0 \Longleftrightarrow \int_{\Omega_T} \left| \frac{\xi_n(t,x) - \xi(t,x)}{\lambda} \right|^{s(t,x)} \diff x \diff t \to 0 \text{ for every }\lambda >0
\end{align*}
\end{thm}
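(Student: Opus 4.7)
The plan is to treat the two implications of the equivalence separately, exploiting only the Luxemburg definition of the norm in \eqref{intro:norm} and the boundedness $s_{\min}\le s(t,x)\le s_{\max}$ coming from Assumption~\ref{ass:usual_bounds_exp}. Throughout, write $\rho(f):=\int_{\Omega_T}|f(t,x)|^{s(t,x)}\diff x\diff t$ for the modular and $\|\cdot\|:=\|\cdot\|_{L^{s(t,x)}}$.

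For the implication $(\Rightarrow)$, set $\eps_n:=\|\xi_n-\xi\|$ and assume $\eps_n\to 0$. A preliminary fact I would establish first is that the infimum in \eqref{intro:norm} is attained in the sense $\rho\!\left(f/\|f\|\right)\le 1$ whenever $0<\|f\|<\infty$. This follows by picking a decreasing sequence $\lambda_k \downarrow \|f\|$ with $\rho(f/\lambda_k)\le 1$ and applying the monotone convergence theorem to $|f/\lambda_k|^{s(t,x)}$, which is monotone because $\lambda_k \downarrow \|f\|$ makes the integrand increase pointwise (here the pointwise bound $s(t,x)\ge 1$ suffices). Now fix $\lambda>0$ and take $n$ large enough that $\eps_n\le \lambda$. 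Writing
\begin{equation*}
\left|\frac{\xi_n-\xi}{\lambda}\right|^{s(t,x)}=\left(\frac{\eps_n}{\lambda}\right)^{s(t,x)}\left|\frac{\xi_n-\xi}{\eps_n}\right|^{s(t,x)} \le \left(\frac{\eps_n}{\lambda}\right)^{s_{\min}}\left|\frac{\xi_n-\xi}{\eps_n}\right|^{s(t,x)},
\end{equation*}
integrating over $\Omega_T$ and using the preliminary fact, I obtain $\rho((\xi_n-\xi)/\lambda)\le (\eps_n/\lambda)^{s_{\min}}\to 0$, which is modular convergence at the fixed $\lambda$. Since $\lambda$ was arbitrary, this yields the $(\Rightarrow)$ direction.

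For the implication $(\Leftarrow)$, I argue directly from the infimum definition. Given $\eps>0$, apply the modular hypothesis at $\lambda=\eps$: for all $n\ge N(\eps)$ we have $\rho((\xi_n-\xi)/\eps)\le 1$, hence $\eps$ lies in the admissible set in \eqref{intro:norm} and so $\|\xi_n-\xi\|\le \eps$. Letting $\eps\to 0^+$ finishes the proof.

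There is essentially no main obstacle; the only non-cosmetic point is the self-attaining property $\rho(f/\|f\|)\le 1$, which requires a brief continuity argument because the modular $\rho$ is not a priori lower semicontinuous in the Luxemburg parameter without some regularity of $s(t,x)$. The boundedness of $s(t,x)$ provided by \ref{ass:usual_bounds_exp} makes this immediate via monotone convergence and provides the key comparison $(\eps_n/\lambda)^{s(t,x)}\le(\eps_n/\lambda)^{s_{\min}}$ when $\eps_n\le \lambda$, which is what drives the forward direction.
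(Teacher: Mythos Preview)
The paper does not prove this theorem; it is stated as a standard fact in the appendix on Musielak--Orlicz spaces, with the reader referred to \cite{chlebicka2019book} for details. So there is no paper proof to compare against, and your argument is the standard one and is correct.

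One minor remark: you lean on Assumption~\ref{ass:usual_bounds_exp} more than necessary. The preliminary fact $\rho(f/\|f\|)\le 1$ follows from monotone convergence using only that $\lambda\mapsto |f/\lambda|^{s(t,x)}$ is monotone, which needs no upper bound on $s$. Likewise, in the $(\Rightarrow)$ step the estimate $(\eps_n/\lambda)^{s(t,x)}\le \eps_n/\lambda$ for $\eps_n\le\lambda$ uses only $s(t,x)\ge 1$, which is built into Definition~\ref{def:musielak_exponent_space}. Thus both implications go through for any measurable exponent $s:\Omega_T\to[1,\infty)$, matching the generality in which the theorem is stated.
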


\begin{cor}
If $s(t, x)$ satisfies the boundedness conditions \ref{ass:usual_bounds_exp}, then strong and modular convergences are equivalent.
\end{cor}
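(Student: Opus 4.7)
The forward implication, that strong convergence entails modular convergence, is immediate from the preceding theorem: if $\|\xi_n - \xi\|_{L^{s(t,x)}} \to 0$, then the modular integral tends to zero for every $\lambda > 0$, in particular for some $\lambda > 0$, so modular convergence holds.

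The substantive direction is the converse. Suppose $\xi_n \to \xi$ modularly, i.e., there exists $\lambda_0 > 0$ such that
\begin{equation*}
\int_{\Omega_T} \left| \frac{\xi_n(t,x) - \xi(t,x)}{\lambda_0} \right|^{s(t,x)} \diff x \diff t \to 0.
\end{equation*}
By the characterization in the preceding theorem, it suffices to prove that the modular integral tends to zero for every $\mu > 0$. The plan is to exploit the pointwise algebraic identity
\begin{equation*}
\left| \frac{\xi_n(t,x) - \xi(t,x)}{\mu} \right|^{s(t,x)} = \left( \frac{\lambda_0}{\mu} \right)^{s(t,x)} \left| \frac{\xi_n(t,x) - \xi(t,x)}{\lambda_0} \right|^{s(t,x)},
\end{equation*}
and then use the assumed two-sided bound $s_{\min} \leq s(t,x) \leq s_{\max}$ from \ref{ass:usual_bounds_exp} to dominate the prefactor by a finite constant independent of $(t,x)$, namely
\begin{equation*}
\left( \frac{\lambda_0}{\mu} \right)^{s(t,x)} \leq \max\!\left\{ \left( \frac{\lambda_0}{\mu} \right)^{s_{\min}}, \left( \frac{\lambda_0}{\mu} \right)^{s_{\max}} \right\} =: K(\lambda_0, \mu).
\end{equation*}
Here one splits the argument depending on whether $\lambda_0/\mu \geq 1$ or $\lambda_0/\mu < 1$, since the map $r \mapsto a^r$ is monotone in opposite directions in these two cases. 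Integrating then yields
\begin{equation*}
\int_{\Omega_T} \left| \frac{\xi_n - \xi}{\mu} \right|^{s(t,x)} \diff x \diff t \leq K(\lambda_0,\mu) \int_{\Omega_T} \left| \frac{\xi_n - \xi}{\lambda_0} \right|^{s(t,x)} \diff x \diff t \to 0.
\end{equation*}
As $\mu > 0$ was arbitrary, modular convergence holds for every $\lambda > 0$, and the theorem supplies strong convergence.

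There is no real obstacle; the whole argument rests on the observation that under \ref{ass:usual_bounds_exp} the function $s(t,x)$ is essentially bounded, so rescaling the modular integral costs only a uniform multiplicative constant. The boundedness assumption is essential here: without an upper bound on $s(t,x)$, the factor $(\lambda_0/\mu)^{s(t,x)}$ can blow up on sets where $s$ is unbounded when $\lambda_0/\mu > 1$, and without a lower bound away from $0$ one cannot control the case $\lambda_0/\mu < 1$, both of which are precisely the situations that make "modular for some $\lambda$" strictly weaker than "modular for every $\lambda$" in general Musielak--Orlicz settings.
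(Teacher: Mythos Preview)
Your proof is correct and is the standard argument; the paper itself states the corollary without proof, treating it as immediate from the preceding equivalence theorem, so there is no alternative approach to compare against. One minor remark on your closing commentary: only the \emph{upper} bound $s(t,x)\le s_{\max}$ is actually needed here (when $\lambda_0/\mu<1$ the prefactor is already $\le 1$ regardless of a lower bound on $s$), but this does not affect the validity of the proof.
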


 The Theorem below makes a connection between modular convergence and the convergences of the products as formulated below.

\begin{thm}\label{thm:modular_l1}\textbf{\textup{(Proposition 2.2, \cite{gwiazda2008onnonnewtonian})}}
Assume that the function $s(t, x)$ satisfies Assumption \ref{ass:exponent_cont_space}. Presuppose also that $\phi_n \to \phi$ modularly in $L^{s(t, x)}(\Omega_T)$ and $\psi_n\to \psi$ modularly in $L^{s'(t, x)}(\Omega_T)$. Then, $\phi_n\,\psi_n \to \phi\,\psi$ in $L^1(\Omega_T)$.
\end{thm}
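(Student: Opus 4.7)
The plan is to reduce modular convergence to norm convergence using the boundedness of the exponent from Assumption~\ref{ass:usual_bounds_exp} and then conclude via a variable-exponent H\"older inequality. Because $s_{\min}\le s(t,x)\le s_{\max}$, the corollary just preceding the statement applies: modular convergence in $L^{s(t,x)}(\Omega_T)$ is equivalent to strong norm convergence. Hence from the hypotheses I may rewrite the assumption as
\[
\|\phi_n-\phi\|_{L^{s(t,x)}}\longrightarrow 0,\qquad \|\psi_n-\psi\|_{L^{s'(t,x)}}\longrightarrow 0,
\]
and consequently the norms $\|\phi_n\|_{L^{s(t,x)}}$ and $\|\psi_n\|_{L^{s'(t,x)}}$ are uniformly bounded in $n$, while $\phi\in L^{s(t,x)}(\Omega_T)$ and $\psi\in L^{s'(t,x)}(\Omega_T)$.

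Next I invoke the generalized H\"older inequality in the variable exponent setting (see e.g.\ \cite{chlebicka2019book}): there is a constant $C$ depending only on $s_{\min},s_{\max}$ such that
\[
\int_{\Omega_T}|f\,g|\diff x\diff t\le C\,\|f\|_{L^{s(t,x)}}\,\|g\|_{L^{s'(t,x)}}
\]
for all $f\in L^{s(t,x)}(\Omega_T)$, $g\in L^{s'(t,x)}(\Omega_T)$. This is precisely the tool that exchanges a bilinear bound for a product bound.

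Now I perform the usual telescoping decomposition
\[
\phi_n\psi_n-\phi\psi=(\phi_n-\phi)\,\psi_n+\phi\,(\psi_n-\psi),
\]
and apply H\"older to each summand to obtain
\[
\|\phi_n\psi_n-\phi\psi\|_{L^1(\Omega_T)}\le C\,\|\phi_n-\phi\|_{L^{s(t,x)}}\,\|\psi_n\|_{L^{s'(t,x)}}+C\,\|\phi\|_{L^{s(t,x)}}\,\|\psi_n-\psi\|_{L^{s'(t,x)}}.
\]
In each product on the right-hand side, the first factor tends to $0$ by the translated convergence above, while the second factor is uniformly bounded (either by the convergence of $\{\psi_n\}$ in $L^{s'(t,x)}$ or because $\phi$ itself is a fixed element of $L^{s(t,x)}$). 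Letting $n\to\infty$ gives $\phi_n\psi_n\to\phi\psi$ in $L^1(\Omega_T)$.

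The only delicate point is the first step: ensuring that modular convergence truly upgrades to norm convergence, which is exactly where \ref{ass:usual_bounds_exp} is essential (without it, modular convergence is strictly weaker). Once that is in hand, the remainder of the argument is a textbook application of H\"older plus the triangle inequality; the log-H\"older continuity \ref{ass:cont} is not needed here. I would not expect any serious obstacle beyond carefully quoting (or reproving) the variable-exponent H\"older inequality with a constant that does not depend on $n$.
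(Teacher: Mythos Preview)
Your proof is correct. The paper does not supply its own proof of this statement: it is recorded in the appendix as an auxiliary result quoted verbatim from \cite{gwiazda2008onnonnewtonian} (Proposition~2.2), so there is nothing to compare against. Your argument---upgrading modular convergence to norm convergence via \ref{ass:usual_bounds_exp} and the preceding corollary, then applying the variable-exponent H\"older inequality to the telescoped difference---is the standard route and entirely adequate; your observation that \ref{ass:cont} plays no role here is also accurate.
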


\section{Auxiliary propositions}

\begin{lem}\label{thm:interpolation}
Suppose that $v \in L^{\infty}_t L^2_x\cap L^q_t W^{1,q}_x$ and $q \geq 2$. Then, $v \in L^{r_0}_t L^{r_0}_x$ where $r_0 = q \left(1 + \frac{2}{d} \right)$ and
$$
\|v\|_{L^{r_0}_t L^{r_0}_x} \leq C(\|v\|_{L^{\infty}_t L^2_x}, \|v\|_{L^q_t W^{1,q}_x}).
$$
\end{lem}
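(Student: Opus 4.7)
The plan is to apply the Gagliardo--Nirenberg interpolation inequality pointwise in time and then integrate in $t$, choosing the interpolation parameter so that the resulting power of the $W^{1,q}_x$--norm matches the available time integrability.

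Concretely, since $\Omega \subset \R^d$ is a bounded Lipschitz domain, the Gagliardo--Nirenberg inequality provides a constant $C = C(\Omega, q, d)$ such that, for every $w \in W^{1,q}(\Omega) \cap L^2(\Omega)$,
$$
\|w\|_{L^{r_0}(\Omega)} \leq C\,\|w\|_{W^{1,q}(\Omega)}^{\theta}\,\|w\|_{L^2(\Omega)}^{1-\theta},
$$
where $r_0 = q(1 + 2/d)$ and $\theta \in (0,1)$ is determined by the scaling identity
$$
\frac{1}{r_0} = \theta\left(\frac{1}{q} - \frac{1}{d}\right) + (1-\theta)\,\frac{1}{2}.
$$
A direct calculation gives $\theta = d/(d+2)$, and in particular $\theta\, r_0 = q$ and $(1-\theta)\,r_0 = 2q/d$. (When $q < d$ this is the usual Gagliardo--Nirenberg; when $q \ge d$ one embeds $W^{1,q}$ into any $L^r$, $r<\infty$, and the same identity follows. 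The full $W^{1,q}$--norm on the right is needed because $v$ is not assumed to vanish on $\partial\Omega$ here.)

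Applied with $w = v(t,\cdot)$ and raised to the power $r_0$, this yields for a.e. $t \in (0,T)$ the pointwise bound
$$
\|v(t,\cdot)\|_{L^{r_0}(\Omega)}^{r_0}
\leq C\,\|v(t,\cdot)\|_{W^{1,q}(\Omega)}^{q}\,\|v(t,\cdot)\|_{L^{2}(\Omega)}^{2q/d}.
$$
Integrating over $t \in (0,T)$ and bounding the factor $\|v(t,\cdot)\|_{L^2_x}^{2q/d}$ by $\|v\|_{L^\infty_t L^2_x}^{2q/d}$ (which can be pulled outside the integral), we obtain
$$
\|v\|_{L^{r_0}_t L^{r_0}_x}^{r_0}
\leq C\,\|v\|_{L^\infty_t L^2_x}^{2q/d}\,\|v\|_{L^q_t W^{1,q}_x}^{q},
$$
which is the claimed estimate after taking the $r_0$-th root.

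There is no real obstacle in the argument; it is a standard consequence of Gagliardo--Nirenberg. The only point worth checking is the precise form of the interpolation inequality on a Lipschitz (rather than smooth) domain, which follows from the existence of a bounded extension operator $W^{1,q}(\Omega) \to W^{1,q}(\R^d)$ valid for Lipschitz $\Omega$; after extending $v(t,\cdot)$ to $\R^d$ one applies the inequality on the whole space and restricts back to $\Omega$, which costs only the extension constant.
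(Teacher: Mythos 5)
Your proof is correct, and the exponent arithmetic checks out: with $\theta = d/(d+2)$ one indeed has $\theta\,r_0 = q$ and $(1-\theta)\,r_0 = 2q/d$, and the borderline case $q=d$ is admissible in Gagliardo--Nirenberg precisely because $\theta<1$. The paper itself offers no proof of this lemma (it is listed among the auxiliary propositions as a classical fact), so there is nothing to compare against beyond noting that yours is the standard parabolic-interpolation argument one would expect; an equivalent route, sidestepping any worry about the precise form of Gagliardo--Nirenberg on Lipschitz domains, is to combine the Sobolev embedding $W^{1,q}(\Omega)\hookrightarrow L^{q^*}(\Omega)$ (with $q^*=dq/(d-q)$ for $q<d$ and any finite $q^*$ otherwise) with plain Lebesgue interpolation between $L^2$ and $L^{q^*}$, which gives the same exponent $\theta = d/(d+2)$.
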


\begin{lem}\label{L2convlemma}
Let $1 \leq p < \infty$ and $\{u_n\}_{n \in \N}$ be a sequence such that $u_n \to u$ in $L^p_t L^p_x$. Then, there exists a subsequence $\{u_{n_k}\}_{k \in \N}$, such that
$$
\mbox{for a.e. } t \in (0,T) \qquad u_{n_k}(t,x) \to u(t,x) \mbox{ in } L^p_x
$$
Moreover, if $\{u_{n_k}\}_{k \in \N}$ is bounded in $L^{\infty}_t L^2_x$ we have for a.e. $t \in (0,T)$
$$
\int_{\Omega} |u(t,x)|^2 \diff x \leq \liminf_{k \to \infty} \int_{\Omega} |u_{n_k}(t,x)|^2 \diff x
$$
\end{lem}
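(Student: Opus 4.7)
The plan is in two short steps, one for each assertion, both being classical measure-theoretic facts phrased in the paper's notation.

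\textbf{Step 1 (a.e.-in-$t$ subsequential convergence).} By Fubini, the hypothesis $u_n \to u$ in $L^p_t L^p_x$ is equivalent to the scalar statement $g_n \to 0$ in $L^1(0,T)$, where I set $g_n(t) := \|u_n(t,\cdot) - u(t,\cdot)\|_{L^p_x}^p$. Since every $L^1$-convergent sequence admits an a.e.\ pointwise convergent subsequence, I extract indices $n_k$ and a null set $N_1 \subset (0,T)$ such that $g_{n_k}(t) \to 0$ for all $t \notin N_1$. This is precisely the first assertion.

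\textbf{Step 2 (Fatou-type inequality).} Suppose now that the extracted sequence $\{u_{n_k}\}$ is bounded in $L^{\infty}_t L^2_x$; after discarding another null set $N_2$ I have $\|u_{n_k}(t,\cdot)\|_{L^2_x} \leq C$ uniformly in $k$ for every $t \notin N_2$. Fix $t \notin N_1 \cup N_2$. By Banach--Alaoglu, every subsequence of $\{u_{n_k}(t,\cdot)\}$ admits a further subsequence that converges weakly in $L^2_x$; by Step~1 that same tail converges strongly to $u(t,\cdot)$ in $L^p_x$. Since $\Omega$ has finite measure, testing both convergences against arbitrary $\varphi \in C^{\infty}_c(\Omega) \subset L^{p'}_x \cap L^2_x$ forces the weak $L^2_x$ limit to equal $u(t,\cdot)$. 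Uniqueness of the weak limit then promotes this to weak convergence of the full sequence, $u_{n_k}(t,\cdot) \rightharpoonup u(t,\cdot)$ in $L^2_x$, for every $t \notin N_1 \cup N_2$.

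Applying weak lower semicontinuity of the norm on the Hilbert space $L^2_x$ pointwise in $t$ yields
$$
\int_{\Omega} |u(t,x)|^2 \diff x \;\leq\; \liminf_{k \to \infty} \int_{\Omega} |u_{n_k}(t,x)|^2 \diff x
\qquad \text{for a.e. } t \in (0,T),
$$
which is the second assertion. There is no real obstacle: the only non-cosmetic point is in Step~2, where one must check that the strong $L^p_x$ limit and any weak $L^2_x$ accumulation point coincide, and this is immediate from the density of $C^{\infty}_c(\Omega)$ in both $L^{p'}_x$ and $L^2_x$ on the bounded domain~$\Omega$.
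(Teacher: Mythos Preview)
Your proof is correct and follows essentially the same approach as the paper's own (commented-out) proof: pass to the scalar sequence $t \mapsto \|u_n(t,\cdot)-u(t,\cdot)\|_{L^p_x}$ to extract an a.e.\ convergent subsequence, then for the second part discard the null set on which the uniform $L^2_x$ bound fails, use Banach--Alaoglu plus the subsequence principle to get weak $L^2_x$ convergence of the full sequence at a.e.\ fixed time, and conclude by weak lower semicontinuity of the norm. Your treatment of the limit identification (testing against $C^\infty_c(\Omega)$) is slightly more explicit than the paper's, but the argument is otherwise the same.
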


\begin{lem}\label{res:monot_trick}\textup{\textbf{(Lemma 2.16, \cite{bulicek2021parabolic})}}
Let $S$ be as in Assumption \ref{ass:stress_tensor}. Assume there are $\chi \in L^{s'(t,x)}(\Omega_T)$ and $\xi \in L^{s(t,x)}(\Omega_T)$, such that
\begin{equation*}
\int_{\Omega_T} \left(\chi - S(t,x,\eta)\right) : (\xi - \eta)\, \psi(x)\diff t \diff x \geq 0
\end{equation*}
for all $\eta \in L^{\infty}(\Omega_T; \R^d)$ and $\psi \in C_0^{\infty}(\Omega)$ with $0 \leq \psi \leq 1$. Then,
$$
S(t,x,\xi) = \chi(t,x) \mbox{ a.e. in } \Omega_T.
$$
\end{lem}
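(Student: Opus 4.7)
The plan is to perform Minty's classical monotonicity argument, with one wrinkle: the hypothesis only permits bounded test matrices $\eta$, whereas $\xi\in L^{s(t,x)}(\Omega_T)$ need not be bounded. I would handle this via truncation. For $k\in\N$, set $\xi_k:=\xi\,\mathbf{1}_{\{|\xi|\le k\}}\in L^\infty(\Omega_T)$, fix arbitrary $w\in L^\infty(\Omega_T)$ and $\lambda\in(0,1]$, and test the hypothesis with $\eta:=\xi_k-\lambda w$. Splitting $\Omega_T=\{|\xi|\le k\}\cup\{|\xi|>k\}$ and noting that $\xi_k=\xi$ on the first set while $\xi_k=0$ on the second, the inequality rewrites as
\begin{equation*}
\int_{\{|\xi|\le k\}}(\chi - S(t,x,\xi-\lambda w)):\lambda w\,\psi\diff x\diff t
+\int_{\{|\xi|>k\}}(\chi - S(t,x,-\lambda w)):(\xi+\lambda w)\,\psi\diff x\diff t\ge 0.
\end{equation*}

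Next I would pass to the limit $k\to\infty$. The remainder integral vanishes by dominated convergence: $\chi:\xi\in L^1(\Omega_T)$ by the variable-exponent Hölder inequality, since $\chi\in L^{s'(t,x)}$ and $\xi\in L^{s(t,x)}$; the growth side of \ref{coercitivity_stress_tensor} applied to the bounded argument $-\lambda w$ places $S(t,x,-\lambda w)$ in $L^{s'(t,x)}$, and then $S(t,x,-\lambda w):\xi\in L^1$, with analogous bounds for the pairings involving $\lambda w$; the set $\{|\xi|>k\}$ shrinks to a null set, so the whole remainder tends to zero. For the main integral the same estimates supply an $L^1$-dominant, and after dividing by $\lambda>0$ we obtain
\begin{equation*}
\int_{\Omega_T}(\chi - S(t,x,\xi-\lambda w)):w\,\psi \diff x\diff t \ge 0.
\end{equation*}

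The next step is to send $\lambda\to 0^+$. A.e.\ convergence $S(t,x,\xi-\lambda w)\to S(t,x,\xi)$ follows from the Carathéodory property \ref{T1}, while uniform $L^{s'(t,x)}$-equiintegrability follows from $|S(t,x,\zeta)|^{s'(t,x)}\le c(|\zeta|^{s(t,x)}+h(t,x))$ and $|\xi-\lambda w|^{s(t,x)}\le C(|\xi|^{s(t,x)}+\|w\|_\infty^{s_{\max}})$ for $\lambda\in(0,1]$. Vitali's convergence theorem (in the Musielak--Orlicz form of Appendix~\ref{app:musielaki}) then justifies passing to the limit against the bounded weight $w\,\psi$, giving
\begin{equation*}
\int_{\Omega_T}(\chi - S(t,x,\xi)):w\,\psi\diff x\diff t \ge 0
\end{equation*}
for every $w\in L^\infty$ and every admissible $\psi$. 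Replacing $w$ by $-w$ promotes the inequality to equality; choosing $w$ componentwise as the sign of $\chi-S(t,x,\xi)$ forces $\int_{\Omega_T}|\chi-S(t,x,\xi)|\,\psi=0$, and letting $\psi$ sweep through a monotone exhaustion of $\Omega$ by smooth cutoffs yields $\chi=S(t,x,\xi)$ a.e.\ in $\Omega_T$.

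The main obstacle is the double limit. At each stage the integrability of $S$ evaluated at both bounded arguments ($-\lambda w$) and variable-sized arguments ($\xi-\lambda w$) must be controlled uniformly in the parameter, and these pieces paired correctly with $\chi$, $\xi$ and $w\psi$ via Hölder in the Musielak--Orlicz spaces recalled in Appendix~\ref{app:musielaki}. All this follows from \ref{coercitivity_stress_tensor} and the bounds \ref{ass:usual_bounds_exp} on $s$, but the nonconstancy of the exponent forces one to invoke Vitali's theorem rather than plain dominated convergence at each passage to the limit.
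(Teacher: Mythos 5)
Your proposal is correct and follows the standard Minty--Browder truncation argument; the paper itself only cites \cite{bulicek2021parabolic} for this lemma without reproducing the proof, but the argument there is of exactly this type (truncate $\xi$, test with $\eta=\xi_k-\lambda w$, pass $k\to\infty$ using the growth bound $|S(t,x,\zeta)|^{s'(t,x)}\lesssim|\zeta|^{s(t,x)}+h(t,x)$ derived from~\ref{coercitivity_stress_tensor} via Young's inequality, then $\lambda\to 0^+$ via Vitali, and finally choose $w$ as a sign function). The one point worth spelling out explicitly if this were to be written up is that the bound $|S(t,x,\zeta)|^{s'(t,x)}\le c(|\zeta|^{s(t,x)}+h(t,x))$ is not literally~\ref{coercitivity_stress_tensor} but is obtained from it by absorbing $\tfrac12|S|^{s'}$ after applying Young's inequality to $S:\zeta\le|S||\zeta|$; once that is noted, every limit passage you invoke is justified.
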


\begin{lem}\label{aubin-lions}\textup{\textbf{(Generalized Aubin--Lions lemma, \cite[Lemma 7.7]{MR3014456})}}
Denote by
$$
W^{1, p, q}(I; X_1, X_2) := \left\{u\in L^p(I; X_1); \frac{du}{dt}\in L^q(I; X_2)\right\}
$$
Then if $X_1$ is a separable, reflexive Banach space, $X_2$ is a Banach space and $X_3$ is a metrizable locally convex Hausdorff space, $X_1$ embeds compactly into $X_2$, $X_2$ embeds continuously into $X_3$, $1 < p <\infty$ and $1\leq q\leq \infty$, we have
$$
W^{1, p, q}(I; X_1, X_3) \text{ embeds compactly into }L^p(I; X_2)
$$
In particular any bounded sequence in $W^{1, p, q}(I; X_1, X_3)$ has a convergent subsequence in $L^p(I; X_2)$.
\end{lem}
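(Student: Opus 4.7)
The plan is to adapt the classical Aubin--Lions--Simon compactness argument to the setting where the outer target $X_3$ is merely a metrizable locally convex Hausdorff space rather than a Banach space. Since the topology on $X_3$ is metrizable and locally convex, it is generated by a countable increasing family of continuous seminorms $\{\rho_k\}_{k\in\mathbb{N}}$. The continuous embedding $X_2 \hookrightarrow X_3$ forces each $\rho_k$ to be dominated by a constant multiple of $\|\cdot\|_{X_2}$, and the Hausdorff property of $X_3$ translates into the implication: $\rho_k(x)=0$ for every $k$ entails $x=0$.

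The crucial first step is a \emph{generalized Ehrling-type lemma}: for every $\varepsilon>0$ there exist $k=k(\varepsilon)\in\mathbb{N}$ and $C_\varepsilon>0$ such that
$$\|u\|_{X_2} \le \varepsilon\,\|u\|_{X_1} + C_\varepsilon\,\rho_k(u) \qquad \text{for all } u\in X_1.$$
I argue by contradiction: if this failed for some $\varepsilon_0>0$, one could produce a sequence $u_n\in X_1$ with $\|u_n\|_{X_2}=1$, $\varepsilon_0\|u_n\|_{X_1}\le 1$, and $\rho_n(u_n)\to 0$. The compact embedding $X_1\hookrightarrow\hookrightarrow X_2$ would deliver a subsequence converging in $X_2$ to some $u$ with $\|u\|_{X_2}=1$, while continuity of $X_2\hookrightarrow X_3$ would force $\rho_k(u)=0$ for each $k$; the Hausdorff property of $X_3$ then forces $u=0$, a contradiction.

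The rest is classical. Given a bounded sequence $\{u_n\}\subset W^{1,p,q}(I;X_1,X_3)$, reflexivity of $X_1$ together with $1<p<\infty$ makes $L^p(I;X_1)$ reflexive, so after extraction $u_n\rightharpoonup u$ weakly in $L^p(I;X_1)$. Writing $u_n(t+h)-u_n(t)=\int_t^{t+h}\tfrac{du_n}{ds}(s)\,ds$ and using the $L^q(I;X_3)$ bound on $du_n/dt$ (via Jensen for $q>1$ and equiintegrability for $q=1$), one obtains that for each fixed $k$,
$$\int_I \rho_k\bigl(u_n(t+h)-u_n(t)\bigr)^p\,dt \longrightarrow 0 \quad\text{as } h\to 0,\ \text{uniformly in } n.$$
Applying the Ehrling inequality of Step 1 to these differences yields
$$\int_I \|u_n(t+h)-u_n(t)\|_{X_2}^p\,dt \le C\,\varepsilon^p\,\|u_n\|_{L^p(I;X_1)}^p + C_\varepsilon^p \int_I \rho_k\bigl(u_n(t+h)-u_n(t)\bigr)^p\,dt,$$
and choosing $\varepsilon$ small and then $h$ small makes the right-hand side arbitrarily small uniformly in $n$. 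The vector-valued Fréchet--Kolmogorov--Riesz criterion, together with the pointwise $X_2$-integrability coming once more from Step 1, gives precompactness of $\{u_n\}$ in $L^p(I;X_2)$; the weak convergence identifies the strong limit as $u$.

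The \textbf{main obstacle} is Step 1: standard Ehrling arguments exploit the norm of the target Banach space, whereas here $X_3$ carries only a family of seminorms. One must systematically replace the single norm by a countable increasing family $\{\rho_k\}$ and invoke the Hausdorff property at the right moment of the contradiction to conclude $u=0$. Once this generalized Ehrling inequality is in place, the remainder is the textbook Simon-type compactness scheme with only minor adjustments in the limiting case $q=1$.
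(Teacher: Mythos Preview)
The paper does not supply its own proof of this lemma: it is stated in the appendix purely as an auxiliary tool and attributed to \cite[Lemma 7.7]{MR3014456}, with no argument given. Consequently there is no ``paper's proof'' against which to compare your proposal.

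That said, your sketch follows the standard Simon-type route (a generalized Ehrling inequality handling the seminorm structure on $X_3$, followed by equi-translation estimates and a Kolmogorov--Riesz/Simon compactness criterion), which is indeed how such results are typically established in the literature. The contradiction argument for the Ehrling step is correct once one notes that the seminorms may be taken increasing, so $\rho_n(u_n)\to 0$ forces $\rho_k(u_n)\to 0$ for every fixed $k$. The only places that would benefit from more care in a full write-up are: (i) spelling out what boundedness in $L^q(I;X_3)$ means when $X_3$ is merely locally convex (boundedness of $t\mapsto\rho_k(du_n/dt)$ in $L^q(I)$ for each $k$, uniformly in $n$), and (ii) verifying the hypotheses of the vector-valued Kolmogorov--Riesz/Simon criterion rather than invoking it by name, in particular the relative compactness in $X_2$ of time-averages $\int_{t_1}^{t_2} u_n(s)\,ds$, which follows from the $L^p(I;X_1)$ bound and the compact embedding $X_1\hookrightarrow X_2$. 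None of these is a genuine gap in the strategy.
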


\begin{lem}\label{velocity_averaging}\textup{\textbf{(Velocity averaging lemma \cite[Lemma 2.7]{karper2013existence})}}
Suppose $\{f^n\}$ bounded in $L^p_{loc}(\R^{2d+1})$, and $\{G^n\}$ bounded in $L^p_{loc}(\R^{2d+1})$ for $1 < p < +\infty$, be such that
$$
\p_tf^n + v\cdot\nabla_x f^n = \nabla_v^kG^n,\quad f^n(0, x, v) = f_0(x, v)\in L^p(\R^{2d + 1}).
$$
Moreover suppose that $\{f^n\}$ is bounded in $L^\infty(\R^{2d+1})$, and $\{(|x|^2 + |v|^2)f^n\}$ is bounded in $L^\infty(0, T; L^1(\R^{2d}))$. Then, for any $|\psi(v)|\leq c|v|$, and $q < \frac{d+2}{d+1}$
$$
\left\{\int_{\R^d}\psi(v)f^n\diff v\right\}
$$
is relatively compact in $L^q((0, T)\times\R^d)$.
\end{lem}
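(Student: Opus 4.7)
The plan is to prove the lemma by combining a velocity truncation with the classical $L^p$-velocity-averaging estimate and a Rellich--Kondrachov compactness argument. The underlying idea is that integration against $\psi(v)$, together with the transport structure $\p_t + v\cdot\nabla_x$, produces a gain of fractional regularity in $(t,x)$; this gain converts weak compactness into strong compactness in $L^q_{t,x}$ once the tails in $v$ and $x$ are controlled by the stated moment bounds.

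First I would reduce to compactly supported test functions in $v$. Pick a smooth cut-off $\chi_R(v)$ equal to $1$ on $\{|v|\le R\}$ and supported in $\{|v|\le 2R\}$, and split
$$
\int_{\R^d}\psi(v)f^n\,\diff v = \int_{\R^d}\psi(v)\chi_R(v) f^n\,\diff v + \int_{\R^d}\psi(v)(1-\chi_R(v)) f^n\,\diff v.
$$
Since $|\psi(v)|\le c|v|$, the assumption $(|x|^2+|v|^2)f^n\in L^\infty_t L^1_{x,v}$ gives the tail bound $\|\int \psi(1-\chi_R)f^n\,\diff v\|_{L^\infty_tL^1_x}\le c R^{-1}\sup_n\|(|v|^2)f^n\|_{L^\infty_tL^1_{x,v}}$. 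Interpolating this weighted $L^1$ tail against the $L^\infty_{t,x,v}$ bound on $f^n$ yields a uniform $L^q_{t,x}$ tail estimate for every $q<\frac{d+2}{d+1}$, where this exponent is precisely the interpolation threshold between $L^\infty$ and the weighted $L^1$ moment. It therefore suffices to prove the strong $L^q$ precompactness of $\rho^R_n(t,x):=\int \psi(v)\chi_R(v) f^n\,\diff v$ for each fixed $R$ and conclude by a diagonal argument in $R\to\infty$.

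For a fixed $R$, I invoke the classical Fourier-based averaging estimate (in the spirit of DiPerna--Lions, G\'erard, and B\'ezard) for the truncated average. After mollifying $\psi\chi_R$ in $v$ and integrating by parts $k$ times in $v$, the source $\nabla_v^k G^n$ is rewritten as $(-1)^k\int_{\R^d}\nabla_v^k(\psi\chi_R)(v)\, G^n\,\diff v$, so the effective right-hand side of the averaged transport equation is a velocity average against a $C^\infty_c(\R^d)$ weight of a sequence bounded in $L^p_{loc}$. Standard Fourier analysis of the resolvent $(i\tau+iv\cdot\xi)^{-1}$ (splitting into a resonant region $\{|\tau+v\cdot\xi|\le\eps|\xi|\}$ and its complement, optimizing in $\eps$) then shows that $\rho^R_n$ is bounded in a fractional Sobolev space $W^{s(p),p}_{loc}((0,T)\times\R^d)$ with $s(p)>0$, uniformly in $n$. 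By Rellich--Kondrachov and the $L^\infty_tL^{(d+2)/(d+1)}_x$ a priori bound coming from the moment assumption, the sequence $\{\rho^R_n\}$ is precompact in $L^q_{loc}((0,T)\times\R^d)$ for every $q<\frac{d+2}{d+1}$. Spatial tails in $x$ are controlled by the $|x|^2 f^n$ moment in exactly the same manner as in the velocity truncation, promoting local precompactness to global precompactness in $L^q((0,T)\times\R^d)$.

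The main obstacle will be the sharp exponent bookkeeping: the fractional gain $s(p)$ depends on $p$ and on the order $k$ of derivatives in the source, and I have to confirm that the chain $W^{s(p),p}_{loc}\hookrightarrow L^{q}_{loc}$, combined with the weighted-moment interpolation of the first step, indeed covers the full range $q<\frac{d+2}{d+1}$ and not merely some strictly smaller subrange. A secondary technical point is the removal of the mollification of $\psi\chi_R$: one must verify that the fractional regularity estimate for $\rho^R_n$ depends only on a fixed Sobolev norm of the compactly supported weight, so that the bound is stable under passing to the unsmoothed $\psi\chi_R$.
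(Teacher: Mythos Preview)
The paper does not prove this lemma at all: it is stated in the appendix as an auxiliary tool and attributed to \cite[Lemma~2.7]{karper2013existence}, with no argument given. There is therefore nothing in the paper to compare your proposal against.

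That said, your outline is the standard route to such a result and is broadly sound: truncate in $v$, use the second moment to kill the large-$|v|$ tail uniformly in $n$, apply the classical $L^p$ velocity-averaging regularity (after integrating by parts the $\nabla_v^k$ onto the smooth compactly supported weight $\psi\chi_R$) to get fractional Sobolev regularity of the truncated averages, invoke Rellich--Kondrachov locally, and use the $|x|^2$ moment to globalize. One small imprecision: when you write ``interpolating this weighted $L^1$ tail against the $L^\infty_{t,x,v}$ bound on $f^n$'', what you actually need is the uniform $L^\infty_t L^c_x$ bound on $\int|\psi|f^n\diff v$ for $c<\frac{d+2}{d+1}$ (which itself comes from combining $f^n\in L^\infty$ with $|v|^2 f^n\in L^1$, exactly as in \cite[Lemma~2.4]{karper2013existence}); then the tail, being dominated by this and going to $0$ in $L^1_x$, goes to $0$ in every $L^q_x$ with $q<c$ by interpolation. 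Your ``main obstacle'' about the exponent range is thus resolved not by squeezing more out of the fractional gain $s(p)$, but by using this separate a priori $L^c_x$ bound on the averages together with local $L^1$ compactness from averaging.
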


\bibliographystyle{abbrv}
\bibliography{parpde_mo_discmodul}
\end{document}